%April 17, 2021
\documentclass[11pt]{amsart}
\usepackage{amsmath, amssymb}

\newcommand{\transv}{\mathrel{\text{\tpitchfork}}}
\makeatletter
\newcommand{\tpitchfork}{%
  \vbox{
    \baselineskip\z@skip
    \lineskip-.52ex
    \lineskiplimit\maxdimen
    \m@th
    \ialign{##\crcr\hidewidth\smash{$-$}\hidewidth\crcr$\pitchfork$\crcr}
  }%
}
\makeatother

\usepackage[demo]{graphicx}
\usepackage{subcaption} 

\usepackage{tikz} 
\usepackage{wrapfig}
\usetikzlibrary{arrows,shapes,positioning}
\usetikzlibrary{decorations.markings}
\tikzstyle arrowstyle=[scale=1]  
\tikzstyle directed=[postaction={decorate,decoration={markings,
		mark=at position .65 with {\arrow[arrowstyle]{stealth}}}}]
\tikzstyle reverse directed=[postaction={decorate,decoration={markings,
		mark=at position .65 with {\arrowreversed[arrowstyle]{stealth};}}}]
\usepackage{float} 
\usepackage[margin=1.0in]{geometry} 
\usepackage[skip=2pt,font=scriptsize]{caption}

\makeatletter
\newcommand*\bigcdot{\mathpalette\bigcdot@{.5}}
\newcommand*\bigcdot@[2]{\mathbin{\vcenter{\hbox{\scalebox{#2}{$\m@th#1\bullet$}}}}}
\makeatother

\usepackage{amsmath}
\usepackage{amsthm}
\usepackage{amsfonts}
\usepackage{xcolor}
\usepackage[english]{babel}

\newcommand{\cs}{$\clubsuit$} 
\newcommand{\red}{\color{red}}
\newcommand{\blue}{\color{blue}}

        \definecolor{pink}{rgb}{1,0,1}

\newcommand{\edit}[1]{ {\red \cs #1 \cs}}

\usepackage[latin9]{inputenc}
\usepackage{amsmath}
\usepackage{amsfonts}
\usepackage{amssymb}
\usepackage{amsthm}

\parindent 0 mm
\parskip 4 mm

\newtheorem{theo}{Theorem}[section]
\newtheorem{prop}[theo]{Proposition}
\newtheorem{coro}[theo]{Corollary}
\newtheorem{lemm}[theo]{Lemma}

\theoremstyle{definition}
\newtheorem{def1}[theo]{Definition}
\newtheorem{rema}[theo]{Remark}

\newcommand{\nwc}{\newcommand}
\nwc{\eps}{\epsilon}
%\nwc{\ep}{\epsilon}
\nwc{\vareps}{\varepsilon}
\nwc{\Oph}{\operatorname{Op}_\hbar}
\nwc{\la}{\langle}
\nwc{\ra}{\rangle}

\nwc{\mf}{\mathbf} %Latex (as in \bf not tilted math letters)
\nwc{\blds}{\boldsymbol} %Latex 
\nwc{\ml}{\mathcal} %Latex

\nwc{\defeq}{\stackrel{\rm{def}}{=}}

\nwc{\cE}{\ml{E}}
\nwc{\cN}{\ml{N}}
\nwc{\cO}{\ml{O}}
\nwc{\cP}{\ml{P}}
\nwc{\cU}{\ml{U}}
\nwc{\cV}{\ml{V}}
\nwc{\cW}{\ml{W}}
\nwc{\tU}{\widetilde{U}}
\nwc{\IN}{\mathbb{N}}
\nwc{\IR}{\mathbb{R}}
\nwc{\IZ}{\mathbb{Z}}
\nwc{\IC}{\mathbb{C}}
\nwc{\IT}{\mathbb{T}}
\nwc{\tP}{\widetilde{P}}
\nwc{\tPi}{\widetilde{\Pi}}
\nwc{\tV}{\widetilde{V}}
\nwc{\supp}{\operatorname{supp}}
\nwc{\rest}{\restriction}

\newcommand{\wt}{\widetilde}

\newcommand{\R}{{\mathbb R}}

\newcommand{\Z}{{\mathbb Z}}

\renewcommand{\d}{\partial}

\newcommand{\codim}{{\operatorname{codim\,}}}

\renewcommand{\phi}{\varphi}

\newcommand{\bcal}{\mathcal{B}}

\newcommand{\dcal}{\mathcal{D}}

\newcommand{\gcal}{\mathcal{G}}

\newcommand{\lcal}{\mathcal{L}}

\newcommand{\tcal}{\mathcal{T}}
\newcommand{\ucal}{\mathcal{U}}

\newcommand{\al}{\alpha}

\newcommand{\ep}{\varepsilon}

\newtheorem{lem}[theo]{{\sc Lemma}}
\newtheorem*{main-theorem}{Theorem}

\addtolength{\baselineskip}{1pt}

\newcommand{\fakesection}[1]{%
	\par\refstepcounter{section}% Increase section counter
	\sectionmark{#1}% Add section mark (header)
	\addcontentsline{toc}{section}{\protect\numberline{\thesection}#1}% Add section to ToC
	% Add more content here, if needed.
}

\title{Centrally symmetric analytic  plane domains are spectrally determined in this class}

\author[Hezari]{Hamid Hezari }
\address{Department of Mathematics, UC Irvine, Irvine, CA 92617, USA} \email{hezari@math.uci.edu}

\author[Zelditch]{Steve Zelditch}
\address{Department of Mathematics, Northwestern University, Evanston, IL 60208, USA} \email{zelditch@math.northwestern.edu}

\date{\today}

\begin{document}

\begin{abstract} We prove that, under some generic  non-degeneracy assumptions,  real analytic, centrally symmetric plane domains are determined by their Dirichlet (resp. Neumann)   spectra. We prove that the conditions are open-dense for real analytic convex domains. One   step is to use a Maslov index calculation to show that the second derivative of the defining function of a  centrally symmetric domain at the endpoints of a bouncing ball orbit is a  spectral invariant. This is also true for up/down symmetric
domains, removing an assumption from the  proof in that case.

\end{abstract}

\maketitle

\fakesection{Introduction}

A plane domain $\Omega \subset \R^2$ is called `centrally symmetric' if it is invariant under the isometric involution
 $\sigma (x, y) =  (-x, -y). $ In this article, it is shown that    bounded, simply-connected,  centrally symmetric real analytic  plane domains
 satisfying a finite number of additional conditions  
 are determined by their Dirichlet, resp. Neumann, Laplace spectra among this family of domains. 
 Every centrally symmetric bounded plane domain has at least one (two if it is star-shaped) $\sigma$-invariant `bouncing ball' orbit for the billiard flow, i.e. straight
 line segments hitting $\partial \Omega$ orthogonally at both endpoints, which contain the origin. The corresponding line segment is
 invariant under $\sigma$ (which reverses its orientation).  We fix a bouncing ball orbit and denote it by $\gamma$;
 the length of the corresponding line segment in $\Omega$ is denoted by $L$.  The main condition
 is that the lengths $2L, 4L$ of a bouncing ball orbit have   multiplicity one in the length spectrum. The result parallels that
 of \cite{Z09} for up-down symmetric plane domains, i.e. where the isometric involution is $(x, y) \to (x, -y)$.

 \begin{figure} [H]
 	%	\centering
 	\begin{minipage}{.5\textwidth}
 		\begin{tikzpicture}[scale=0.50]
 		%	\draw [gray!50]  (4,0.5) -- (3, 2) -- (1, 2.8) --(0, 3)-- (-1,2.8)  -- (-2.8,1.5) -- (-4,-0.5) -- (-3,-2) -- (-1, -2.8) --(0, -3)-- (1, -2.8)  -- (2.8,-1.5)--cycle;
 		\draw [black] plot [smooth cycle] coordinates {(4,0.5) (2.9,1.9) (1.5, 2.7) (0, 3) (-1.5,2.7)  (-2,1.5) (-4,-0.5) (-2.9,-1.9) (-1.5, -2.7) (0, -3) (1.5, -2.7) (2,-1.5)};
 		\draw[blue] (0,3)--(0,-3);
 			\node at (.3, 1.7) [align=center]{$\gamma$};
 		%\draw[blue] (3.9, .8)--(-3.9,-.8);
 				\end{tikzpicture}
 		\centering
 		\caption{ A centrally symmetric domain.}
 		\label{CS}
 	\end{minipage}%
 	\begin{minipage}{.5\textwidth}
 		\begin{tikzpicture}[scale=0.50]
 		%	\draw [gray!50]  (3.8,0.5) -- (3, 2) -- (1, 2.8) --(0, 3)-- (-1,2.8)  -- (-2.8,1.5) -- (-4,-0.5) -- (-3,-2) -- (-1, -2.8) --(0, -3)-- (1, -2.8)  -- (2.8,-1.5)--cycle;
 		\draw [black] plot [smooth cycle] coordinates {(3.7,0) (2.83,1.83) (1.5, 2.7) (0, 3) (-1.5,2.7)  (-2,1.5) (-4,0) (-2,-1.5) (-1.5, -2.7) (0, -3) (1.5, -2.7) (2.83,-1.83)};
 		%\draw[blue] (3.7,0)--(-4,0);
 	    \draw[blue] (0, 3)--(0,-3);
 	    	\node at (.3, 1.7) [align=center]{$\gamma$};
 			\end{tikzpicture}
 		\centering
 		\caption{ An up-down symmetric domain.}
 		\label{AS}
 	\end{minipage}
 \end{figure}
 
  To
  state the result precisely, we need some notation. We denote by $P_{\gamma}$  the  linear Poincar\'e map of $\gamma$. An orbit $\gamma$ is non-degenerate if $\det (I - P_\gamma) \neq 0$.  When the orbit is elliptic, the eigenvalues of $P_\gamma$
  are of modulus one and of the form $\{e^{ i \alpha}, e^{-i\alpha}\}$, $0 < \alpha \leq \pi$, and when it is the hyperbolic its eigenvalues $\{e^{\alpha}, e^{-\alpha}\}$, $\alpha > 0$, are real and they are never roots of unity in the non-degenerate case. We may rotate 
 $\Omega$, keeping the origin as the center of the symmetry, to make $\gamma$ the vertical $y$-axis.
 Then locally near the vertices of $\gamma$, $\partial \Omega$ consists of two graphs, $$ \{y = f(x)\} \cup \{y = -f(-x)\}, $$ lying above, resp. below, the horizontal axis.

Modifying the definition of \cite[Section 1.1.1]{Z09}, we define 
  $\mathcal D_{L}$, to be the class of simply-connected centrally symmetric  real analytic plane domains $\Omega$ satisfying the conditions:
\begin{enumerate}\label{CONDITIONS}
\item  There is a  non-degenerate bouncing ball orbit $
\gamma$ of length $L_{\gamma} = 2 L$ through the origin.

\item In the the elliptic case,  the eigenvalues $\{e^{i \alpha}, e^{-i \alpha}\}$  of the linear Poincare map $P_{\gamma}$ satisfy that $ \cos
\frac{\alpha}{2} $ does not belong to the `bad set' ${\mathcal B} = \{0,  \frac12, 1 \}$. 

\item $f^{(3)}(0) \neq 0$. 

\item The lengths $2L, 4L$ of $\gamma, \gamma^2$, have  multiplicity one in the length spectrum Lsp$(\Omega)$ and $4L \neq |\d \Omega|$. 

\end{enumerate}

When the orbit is elliptic, its eigenvalues
$\{e^{\pm i \alpha}\}$ are of modulus one and we require $\cos \frac{\alpha}{2}$ to lie  outside a certain bad set $\bcal$. In the hyperbolic case, its eigenvalues are of the form $\{e^{\pm \alpha}\}$ and we require no condition other than the non-degeneracy assumption $\alpha \neq 0$.  The role of the bad set ${\mathcal B}$ will become clear during the proof;   the angles being `non-bad'  eliminates  angle parameters where  certain
functions appearing the wave trace invariants fail to be independent. 

 It is proved in \cite{PS87} that the set of domains satisfying the conditions is generic in the $C^{\infty}$ topology. It does not automatically follow   that the set of real analytic domains satisfying the conditions (and which additionally are assumed to be
centrally symmetric and have a bouncing ball orbit of a prescribed length $L$) are generic  (with the relative topology).  In Proposition \ref{open dense and generic}, we prove that in the
space of centrally symmetric {\it convex } real analytic domains, the ones satisfying the condition  is open dense in the $C^{\omega}$ topology of \cite{BrT86, Cl20}.   We further prove (using the analysis in \cite{PS87}) that it is generic for possibly non-convex real analytic centrally symmetric domains.

Let $\Delta_\Omega^B$ denote the Euclidean Laplacian on $\Omega$ with boundary conditions $B$ (either $B = D$, Dirichlet or $B = N$,
Neumannn).

\begin{theo} \label{ONESYM} For either  Dirichlet (or Neumann)  boundary
conditions $B$, the map $$\Omega \longmapsto \text{Spec}( \Delta_\Omega^B)$$ is one-to-one on the class $\mathcal D_L$. \end{theo}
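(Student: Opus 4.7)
The strategy is to extract, from the spectrum alone, enough normal-form data along the bouncing ball orbit $\gamma$ to recover the full Taylor expansion of the boundary defining function $f$ at the origin; analyticity and central symmetry then force $\partial\Omega$ globally. One passes from $\text{Spec}(\Delta^B_\Omega)$ to the singular support and leading singularities of the (Dirichlet or Neumann) wave trace. By condition (4), the singularities at $t = \pm 2L$ and $t = \pm 4L$ come solely from the iterates $\gamma$ and $\gamma^2$, so the length $L$, the eigenvalues of $P_\gamma$ (hence the angle $\alpha$), and the full family of Balian--Bloch (equivalently Guillemin--Melrose) wave invariants $B_{\gamma^r,j}$ for $r=1,2$ and $j\geq 0$ are all spectral invariants. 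Each such $B_{\gamma^r,j}$ is a universal polynomial in the Taylor coefficients $\{f^{(k)}(0)\}_{k\leq 2j+2}$, with coefficients depending rationally on $\cos(\alpha/2)$ and $\sin(r\alpha/2)$; central symmetry identifies the Taylor data at the opposite endpoint of $\gamma$ with the reflection of that at the origin, so only one germ $f$ enters the picture.

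Next, one carries out the Maslov-index calculation announced in the abstract to recover $f''(0)$ as a spectral invariant. Writing the sub-principal wave invariant at $T=4L$ as the sum of endpoint contributions, central symmetry collapses the two endpoints into a single expression affine in $f''(0)$, while the Maslov index of $\gamma^2$ fixes the phase so that $f''(0)$ can be isolated cleanly. This is the substantive new ingredient compared with \cite{Z09}: up-down symmetry had previously allowed the graphs $y=f(x)$ and $y=-f(-x)$ to be decoupled endpoint by endpoint, whereas central symmetry only identifies them.

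With $L$, $\alpha$, and $f''(0)$ known, one proceeds by induction on $k$: assuming $f^{(j)}(0)$ is already recovered for every $j<k$, one selects a wave invariant $B_{\gamma^r,j}$ whose dependence on the Taylor coefficients is linear in $f^{(k)}(0)$ with a nonzero trigonometric prefactor, and then solves for $f^{(k)}(0)$. The bad-set hypothesis $\cos(\alpha/2)\notin\{0,\tfrac12,1\}$ (respectively $\alpha\neq 0$ in the hyperbolic case) ensures that these prefactors do not vanish, and the non-degeneracy $f^{(3)}(0)\neq 0$ unlocks the first odd-order step as in \cite{Z09}. The principal obstacle is verifying that this triangular inductive structure actually closes at every $k$, namely that for each $k$ a suitable pair $(r,j)$ exists for which the coefficient of $f^{(k)}(0)$ in $B_{\gamma^r,j}$ is nonzero while the remaining unknowns are strictly of lower order; this is where the bad-set conditions and condition (3) are used in full strength.

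Once the entire Taylor series of $f$ at $0$ is recovered, analyticity extends $f$ along $\partial\Omega$ and central symmetry produces the opposite arc, so $\Omega$ is determined within $\mathcal D_L$ and Theorem \ref{ONESYM} follows.
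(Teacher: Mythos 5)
Your overall strategy --- wave invariants at $2L$ and $4L$, a Maslov-index argument for $f''(0)$, then induction on the Taylor coefficients of $f$ --- is the paper's strategy, but the two steps that carry the real weight of the proof are respectively misdescribed and left unverified.

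First, the recovery of $f''(0)$. You propose to read it off from a sub-principal wave invariant at $T=4L$ that is ``affine in $f''(0)$.'' That is not how it can work: the sub-principal invariant $b_{\gamma^2,1}$ has $f^{(4)}(0)$ and $(f^{(3)}(0))^2$ as its top-order unknowns, and $f''(0)$ enters only through the inverse Hessian coefficients. The actual difficulty is that the principal term of the trace, i.e. $|\det(I-P_{\gamma^r})|$, determines only the angle $\alpha$, and the relation $(1+Lf''(0))^2=\cos^2(\alpha/2)$ (resp. $\cosh^2(\alpha/2)$) has two roots $f''(0)=\tfrac1L\left(-1\pm\cos(\alpha/2)\right)$: two non-congruent germs realize the same eigenvalues of $P_\gamma$. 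What distinguishes them is the signature of $\mathrm{Hess}\,\mathcal L_+$, which enters the prefactor through the Maslov index as in Proposition \ref{maslov}; the eigenvalues of the Hessian are $a+2\cos(k\pi/r)$ with $a=-2(1+Lf''(0))$, and for $r=2$ in the elliptic case (where $r=1$ gives signature $0$ for both roots) and $r=1$ in the hyperbolic case the two roots produce signatures $-2$ and $+2$. Your sentence about the Maslov index of $\gamma^2$ ``fixing the phase'' gestures at this, but the mechanism you actually describe (isolating $f''(0)$ from a sub-principal invariant) would not close, and your explanation of why this step is new (decoupling of endpoints under up-down versus central symmetry) is not the issue --- the same two-root ambiguity occurs in both symmetry classes.

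Second, the induction. At order $j$ the normalized invariant $b'_{\gamma^r,j-1}$ contains two top-order unknowns simultaneously, $f^{(2j)}(0)$ and $f^{(3)}(0)f^{(2j-1)}(0)$, weighted by $(h^{11}_{2r})^2$ and $\sum_{q}(h^{1q}_{2r})^3$ respectively, so no single invariant can be ``solved for $f^{(k)}(0)$.'' One must use both $r=1$ and $r=2$ and prove that $G(r)=\sum_{q}(h^{1q}_{2r})^3/(h^{11}_{2r})^2$ takes different values at $r=1$ and $r=2$; this reduces to a degree-$6$ polynomial equation in $a$ whose roots are exactly what produces the bad set $\cos(\alpha/2)\in\{0,\tfrac12,1\}$. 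You flag this decoupling as ``the principal obstacle'' but do not carry it out; it is the core computation of the proof and the only place the bad set is actually identified. You also omit that only $(f^{(3)}(0))^2$ is an invariant, so $f$ is recovered only up to $x\mapsto -x$, i.e. up to congruence of the domain --- which is the correct conclusion but needs to be stated.
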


We say that $\Omega$ is conditionally spectrally determined by $ \text{Spec}( \Delta_\Omega^B)$, meaning that it is determined by
its spectrum in the class $\mathcal  D_L$ of centrally symmetric bounded analytic domains with an invariant bouncing ball orbit of length $L$.
The only other explicit infinite dimensional classes of domains known to be conditionally determined by their spectra are  up-down symmetric 
domains with an orientation reversing isometry that reverses a  bouncing ball orbits, and the   the dihedral domains (\cite[Theorem 1.4]{Z09}). The only domains known to be unconditionally determined by their spectra 
among all smooth domains are ellipses of small eccentricity \cite{HZ19}. Triangular and trapezoidal domains (see \cite{Durso} and \cite{HLR1, HLR2}) are also spectrally determined within themselves but these classes are obviously finite dimensional. There are also some non-explicit examples of nearly circular smooth domains provided by \cite{W00, W02}, that are spectrally unique among all smooth domains. Marvizi and Melrose \cite{MM} constructed a two-parameter family of planar domains that are locally spectrally unique. The two parameter family consists of domains that are defined by some elliptic integrals but it is not known that they are ellipses.

Much of the  proof of Theorem \ref{ONESYM} is a rather straightforward modification  of the proof in  \cite{Z09} that up-down symmetric are conditionally
determined by their spectra. It uses the same analytic results and only requires a change in the algebraic analysis of the  wave trace invariants. However, it seems to us worthwhile to present what seems to be the only second explicit infinite dimensional class of  conditionally spectrally determined domains. 

Moreover, there is an important new feature in the proof, or more precisely a step which
fills in  a gap (or removes one assumption from) the inverse result for up/down symmetric domains \cite{Z09}. Namely, if the domain is represented locally as a graph $y = f(x)$,   the $y$-axis being  the bouncing ball
orbit, then it was asserted in \cite{Z09} that $f''(0)$ is a spectral invariant. This is the first step in a recursive procedure to determine
all of the Taylor coefficients of $f$ at $0$, in both the centrally symmetric and up-down symmetric inverse problems. It was implicit in \cite{Z09} that $f''(0)$ could be determined by the eigenvalues of the
Poincar\'e map of the bouncing ball orbit. However,  as  explained in Section \ref{f''0}, there exist 
two functions $f, $ resp. $g$  locally defining $\Z_2$-symmetric domains (up-down or centrally symmetric) with the same bouncing ball orbit, and for which the linear
Poincar\'e maps have  the same eigenvalues.  In this
article we prove that indeed $f''(0)$ is a spectral invariant of a $\Z_2$-symmetric domain by showing that the Maslov indices in the principal wave trace wave invariants of the bouncing ball 
orbits  of the two `dual' solutions $f, g$ are different. Therefore it is not necessary to add the assumption that  $f''(0)$ is known to
proof of Theorem \ref{ONESYM}, as is done in related work on the inverse spectral problem in \cite{CdV}.
The existence of the pair $(f, g)$  also suggests a kind of duality between the  these (germs of) domains,  whose bouncing ball orbits
have the same eigenvalues of the Poincar\'e map. It  raises the question of whether one can construct complete Taylor expansions
of $f, g$ for which the billiard maps  around the  bouncing ball orbits have exactly the  same  Birkhoff normal form.  We are currently
investigating this question  \cite{HZ21}). 
It would  follow that $f''(0)$ is a quantum Birkhoff normal form invariant which is not a classical Birkhoff normal form invariant.

\begin{rema} By an unfortunate abuse of terminology, a domain with left-right and up-down  $\Z_2 \times \Z_2$ symmetry $(x, y) \to (\pm x, \pm y)$  (i.e. the symmetries of an ellipse) was
referred to as centrally symmetric in \cite{Z09} (see Corollary 1.2.) Central symmetry refers only to the symmetry $\vec x \to - \vec x$.\end{rema}

This note was stimulated by the recent article of Bialy-Mironov \cite{BM20}. In that article, it is proved that a centrally symmetric convex plane
domain which is $C^0$ foliated in a certain neighborhood of the boundary must be an ellipse. The neighborhood is that between an invariant
curve of 4-link orbits and the boundary.  The use of the 4-link orbits is novel in inverse theory, both dynamical and Laplace spectral.  It should also be emphasized that there are no analyticity assumptions in  \cite{BM20}.

The  original idea for this article was investigate to whether  the Bialy-Mironov theorem,  in combination with the  wave trace  invariants, would  prove that a real analytic  centrally symmetric domain isospectral to an ellipse was an ellipse. In fact, this problem remains open because ellipses
do not satisfy the ``non-vertex condition'' at the endpoints, nor would an isospectral domain. It is possible that one may use fifth (or higher)
derivatives instead of third derivatives, but our attempts to do so ran into a wall of complicated formulae.  Instead,  the authors
proved Theorem \ref{ONESYM}, which does not pertain to ellipses but rather to generic centrally symmetric analytic domains. 
It would also be natural to study the wave trace invariants for the 4-link orbit of  \cite{BM20}. The wave trace invariants are expressed as
a sum over the vertices of the four link. Under central symmetry, the vertices split into two pairs and each pair contributes Taylor coefficients
of $f$ at its endpoints. But that leaves two independent sets of Taylor coefficients (one at each non-equivalent endpoint), and to date it
has not proved possible to determine the domain from this kind of data. 

\subsection{Acknowledgements} 
We thank L. Stoyanov for his advice on the genericity issues in Section \ref{ODSECT}. In the end, our proof of genericity of the conditions
for convex domains is quite different from the genericity arguments in \cite{PS87,PS17}, but in the non-convex case we have followed the
argument in \cite{PS87} to the extent possible.

\section{Wave trace invariants associated to a bouncing ball orbit}
 Let $\Omega$ be a smooth plane domain and let $\Delta_\Omega^B$ be the (positive) Laplacian on $\Omega$ with boundary condition $B$ on $\d \Omega$ with eigenvalues $\{ \lambda_j^2\}_{j=1}^\infty$.  The trace of the even wave operator is defined by
 $$ w_\Omega^B(t):= \text{Tr} \cos \left( t \sqrt{\Delta_\Omega^B} \right )  = \sum_{j=1}^{\infty} \cos (t \lambda_j ).$$ 
 The sum converges in the sense of tempered distributions. Let us define wave trace invariants associated to a simple non-degenerate periodic orbit $\gamma$ of length $L_\gamma$. Let  $\hat{\rho} \in C_0^{\infty}(L_{\gamma} - \epsilon, L_{\gamma} + \epsilon)$ be a cutoff, equal to one on an interval  $(L_{\gamma} - \epsilon/2, L_{\gamma} + \epsilon/2)$ which contains
no other lengths in Lsp$(\Omega)$ occur in its support.
Then the Fourier transform of the localized wave trace (which is the same as the trace of the smoothed resolvent)
admits an asymptotic expansion of the form (see \cite{GuMe79})
\begin{equation} \label{b_j}\int_0^{\infty} \hat{\rho}(t) e^{ikt} w_B^{\Omega}(t)\, dt  \sim  {F}_{B, \gamma}(k)  \sum_{j =
	0}^{\infty} b_{\gamma, j}  k^{-j},\;\;\; k \to
\infty,\end{equation} 
where 

\begin{itemize}
	\item ${F}_{B, \gamma}(k)$ is the {\it
		symplectic pre-factor} $${F}_{B, \gamma}(k)  =
	C_0 \; (-1)^{\epsilon_B(\gamma)} \frac{e^{ i k L_{\gamma}}
		e^{i \frac{\pi}{4} m_{\gamma}}}{\sqrt{|\det (I - P_{\gamma})|}}.$$
	
	\item $P_{\gamma}$ is the Poincar\'e map associated to $\gamma$.
	
	\item $\epsilon_B(\gamma)$ is the signed  number of intersections
	of $\gamma$ with  $\partial \Omega$ (the sign depends on the
	boundary conditions; $\pm 1$ for each bounce for Neumann/Dirichlet
	boundary conditions).
	
	\item  $m_{\gamma}$ is the Maslov  index of $\gamma$.\footnote{The term Maslov index is somewhat ambiguous here, and 
	several authors refer to $m_{\gamma}$ as the Gutzwiller-Maslov index since it is the exponent arising in the Gutzwiller-Balian-Bloch	
	trace formula  (see e.g. \cite{CRL} for a discussion valid for domains with boundary).}
	\item $C_0$ is a non-zero universal constant (e.g. factors of $2 \pi$)
	which is not necessary to know for the proof of Theorem
	\ref{ONESYM}.
	\end{itemize}

The  coefficients $\{b_{\gamma, j}\}$ are easily
related (in fact equivalent) to the wave trace coefficients $\{a_{\gamma, j}\}$ defined by the singularity expansion of the wave trace at $L_\gamma$. (see \cite{GuMe79, Z09}). We work solely with the expansion \eqref{b_j}, which we
term the `Balian-Bloch expansion' after \cite{BB72}. It is clear that the `Balian-Bloch coefficients'
$b_{\gamma, j}$ are spectral invariants and it is these invariants
we use in our inverse spectral results.

\subsection{Wave trace invariants of a bouncing ball orbit}\label{f''0}

We now focus on the case where $\gamma$ is a  bouncing ball
orbit (i.e. $2$-link periodic reflecting ray). As in the
introduction, we orient $\Omega$ so that the bouncing ball orbit
is along the $y$-axis with endpoints $A = (0,\frac{L}{2}), B = (0,
-\frac{L}{2})$ and parametrize $\partial \Omega$ near $A$ by $y =
f_+(x)$ and near $B$ by $y = f_-(x)$. At this point we do not assume the domain
is up-down symmetric or centrally symmetric. 

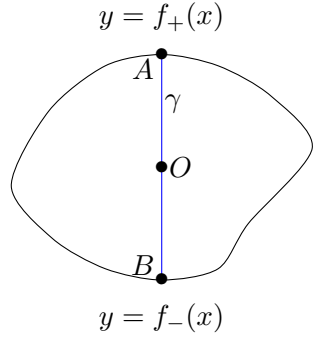
\begin{figure}
		\begin{minipage}{.5\textwidth}
	\begin{tikzpicture}[scale=0.50]
	%	\draw [gray!50]  (3.8,0.5) -- (3, 2) -- (1, 2.8) --(0, 3)-- (-1,2.8)  -- (-2.8,1.5) -- (-4,-0.5) -- (-3,-2) -- (-1, -2.8) --(0, -3)-- (1, -2.8)  -- (2.8,-1.5)--cycle;
	\draw [black] plot [smooth cycle] coordinates {(4,0.5) (2.9,1.9) (1.5, 2.7) (0, 3) (-1.5,2.7)  (-3,1.5) (-4,-0.5) (-2.9,-1.9) (-1.5, -2.7) (0, -3) (1.5, -2.7) (2.3,-1.5)};
\draw[blue] (0,3)--(0,-3);
\node at (.3, 1.7) [align=center]{$\gamma$};
\node at (0, 4) [align=center]{$y = f_+(x)$};
\node at (0, - 4) [align=center]{$y = f_-(x)$};
\draw (0,3) node{$\bullet$};
\draw (0,-3) node{$\bullet$};
\draw (0,0) node{$\bullet$};
\node at (.5, 0 ) [align=center]{$O$};
\node at (-.5, 2.6) [align=center]{$A$};
\node at (-.5, -2.6) [align=center]{$B$};
	\end{tikzpicture}
	\centering
	\caption{Local defining functions near a bouncing ball orbit of a general simply-connected smooth domain.}
	% \label{}
\end{minipage}
\end{figure}

When $\gamma$ is elliptic, the
eigenvalues of $P_{\gamma^r}$ are of the form $\{e^{\pm i r \alpha}\}$
($0< \alpha  \leq \pi $) while in the hyperbolic case they are of the
form $\{e^{\pm r \alpha}\}$ ($\alpha >0)$. Thus
\begin{equation} \label{det}
\det (I - P_{\gamma^r})   = \begin{cases} 2 - 2 \cos (r \alpha) & \text{(elliptic case)},
\\  2 - 2 \cosh (r \alpha) & \text{(hyperbolic case)}. \end{cases}
 \end{equation}
When the domain is up-down or centrally symmetric one has $f''_+(0) =  -f_-''(0)$, and one can determine $\alpha$ in terms of $f_+''(0)$ (and vice versa)  via (see \cite{KT,K01})
\begin{equation} \label{quadratic equation} \left ( 1+ L f''_+(0) \right )^2   = \begin{cases}   \cos^2 (\alpha/2) & \text{(elliptic case)}, \\
 \cosh^2(\alpha/2)   & \text{(hyperbolic case)}. \end{cases} \end{equation}
 If we fix $\alpha$, each of these is a quadratic equation in terms of $f''_+(0)$ and its two roots are given by
 \begin{equation} \label{roots of quadratic}  f''_+(0)  = \begin{cases}  \frac{1}{L} \left (-1 \pm  \cos (\alpha/2) \right ) & \text{(elliptic case)}, \\
\frac{1}{L} \left ( -1 \pm  \cosh (\alpha/2) \right )   & \text{(hyperbolic case)}. \end{cases} \end{equation}
 
 We note that (\cite{KT,K01})
 \begin{equation*} \begin{cases} \gamma \; \text{is elliptic} & \text{if and only if}  \;  -2<Lf_+''(0) <0, \\
\gamma \;  \text{is hyperbolic}  & \text{if and only if}\; Lf_+''(0) >0 \; \text{or} \; Lf_+''(0) < -2. \end{cases} \end{equation*}

We  define the  length functionals in Cartesian coordinates for
the two possible orientations of the  $r$th iterate of a bouncing
ball orbit by
\begin{equation} \label{LPM} {\mathcal L}_{\pm, 2r } (x_1, \dots, x_{2r}) = \sum_{p = 1}^{2r} \sqrt{(x_{p + 1} - x_p)^2 +
	(f_{w_{\pm}(p + 1) }(x_{p +1}) - f_{w_{\pm}(p)}(x_p))^2}. \end{equation}
Here, $w_{\pm}: \Z_{2r} \to \{\pm\}$, where $w_{+}(p)$ (resp. $w_-(p))$ alternates sign starting with
$w_+(1) = +$ (resp. $w_-(1) = -$). Also, we use cyclic index
notation where $x_{2r + 1} = x_1$.

We will  need formulae for the entries of the inverse  Hessian of
${\mathcal L}_{+, 2r}$ at its critical point $(x_1, \dots, x_{2r}) =
0$ in Cartesian coordinates corresponding to the $r$th repetition
of a bouncing ball orbit. We denote
$$ H_{2r} = \text{Hess} \, \mathcal {\mathcal L}_{+, 2r}(0).$$ 
The following lemma was proved in \cite{Z09}.
\begin{lemm} \label{hpq}Suppose $f''_+(0) =  -f_-''(0)$, that is the curvatures at the two end points of the bouncing ball orbit $\gamma$ are the same.  Let $h_{2r}^{pq}$ be the matrix elements of the inverse matrix
$H_{2r}^{-1}$ and let 
$$a:= -2(1 + Lf''_+(0)).$$  Then  for  $1 \leq p \leq q \leq 2r$, 
	$$\begin{array}{l}  h^{pq}_{2r} = \frac{L}{2 \left ( T_{2r}(- a/2) -1 \right )} \left ( U_{2r - q + p -1}(-a/2) +
	U_{q - p - 1}(-a/2)\right),  \end{array}$$
	where  $T_n,$ resp. $U_n$, are Chebychev
	polynomials  of the first, resp. second, kind.
	They are defined by:
	$$T_n(\cos \theta) = \cos n \theta,\;\;\;\;\; U_n(\cos \theta) = \frac{\sin (n + 1) \theta}{\sin \theta}.$$
\end{lemm}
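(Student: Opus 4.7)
My plan is to first compute the Hessian $H_{2r}$ explicitly, observe that it is a cyclic (circulant) tridiagonal matrix with constant entries, and then invert it by solving the associated discrete Green's function problem, where the Chebyshev polynomials enter because they are the fundamental solutions of the governing three-term recurrence.

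\textbf{Step 1: Computing $H_{2r}$.} The functional $\mathcal{L}_{+,2r}$ is a sum of $2r$ chord-length terms
\[
d_p(x_p,x_{p+1}) = \sqrt{(x_{p+1}-x_p)^2 + (f_{w_+(p+1)}(x_{p+1}) - f_{w_+(p)}(x_p))^2},
\]
each depending only on two consecutive variables. A direct second-derivative calculation of $d_p$ at the critical point $x=0$, using that $f'_{\pm}(0)=0$ (perpendicular incidence) and that $|f_{w_+(p+1)}(0)-f_{w_+(p)}(0)|=L$, yields the $2\times 2$ block
\[
\frac{1}{L}\begin{pmatrix} 1+Lf''_{w_+(p)}(0) & -1 \\ -1 & 1+Lf''_{w_+(p+1)}(0) \end{pmatrix}.
\]
Here is where the hypothesis $f''_+(0)=-f''_-(0)$ enters: it forces both diagonal entries to equal $\beta:=1+Lf''_+(0)=-a/2$, independently of which endpoints the chord joins. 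Summing these blocks along the cycle (each $x_p$ appears in exactly two chords), one obtains $H_{2r}=\frac{1}{L}M$, where $M$ is the symmetric circulant tridiagonal matrix with diagonal entries $2\beta$, off-diagonal entries $-1$, and the cyclic corners $M_{1,2r}=M_{2r,1}=-1$.

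\textbf{Step 2: Inverting $M$ via Chebyshev polynomials.} Since $M$ is circulant, $(M^{-1})_{pq}$ depends only on $q-p\bmod 2r$, and it suffices to solve $Mv=e_q$. For $p\ne q$ (mod $2r$), the unknown $v_p$ satisfies the homogeneous three-term recurrence
\[
v_{p+1} - 2\beta\, v_p + v_{p-1} = 0,
\]
whose fundamental solutions are $T_n(\beta)$ and $U_{n-1}(\beta)$. On each of the two arcs into which $q$ splits the cycle $\Z_{2r}$, I write $v_p$ as the combination $c_\pm\, U_{\pm(p-q)-1}(\beta)$ that vanishes at the appropriate ``boundary'' (using $U_{-1}=0$). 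Imposing the two cyclic matching conditions at $p=q$ (continuity of $v$ and the unit jump $-v_{q-1}+2\beta v_q-v_{q+1}=1$ coming from $e_q$) determines $c_+,c_-$. After a routine computation, the two constants combine to produce the symmetric sum $U_{2r-q+p-1}(\beta)+U_{q-p-1}(\beta)$ in the numerator, while the common denominator collapses, using the identity $T_{2r}(\beta)-1=\tfrac12\bigl(U_{2r-1}(\beta)-U_{2r-3}(\beta)\bigr)-1$ (or more cleanly through $\cos(2r\theta)-1=-2\sin^2(r\theta)$ with $\beta=\cos\theta$), into $2(T_{2r}(\beta)-1)$. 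Multiplying by the prefactor $L$ gives the claimed formula for $h^{pq}_{2r}$.

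\textbf{Cross-check via Fourier diagonalization.} As a sanity check, one may alternatively diagonalize $M$ in the discrete Fourier basis: its eigenvalues are $2\beta-2\cos(\pi k/r)$ for $k=0,\dots,2r-1$, so
\[
(M^{-1})_{pq}=\frac{1}{2r}\sum_{k=0}^{2r-1}\frac{e^{i\pi k(p-q)/r}}{2\beta-2\cos(\pi k/r)}.
\]
Substituting $\beta=\cos\theta$ and evaluating this trigonometric sum using $\sin A+\sin B=2\sin\tfrac{A+B}{2}\cos\tfrac{A-B}{2}$ and $\cos(2r\theta)-1=-2\sin^2(r\theta)$ reproduces exactly the combination $\bigl(U_{2r-q+p-1}+U_{q-p-1}\bigr)/\bigl(2(T_{2r}-1)\bigr)$, providing the same answer.

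\textbf{Main obstacle.} The conceptual content is small, but the bookkeeping is the genuine difficulty: keeping careful track of the cyclic indexing (so that $q-p$ and $2r-q+p$ play symmetric roles), verifying that the alternation $w_+$ of the endpoints does not affect the diagonal block thanks to $f''_+(0)=-f''_-(0)$, and choosing the right pairing of Chebyshev identities so that the denominator collapses to the compact form $2(T_{2r}(-a/2)-1)$. Once the circulant structure is recognized, the Chebyshev formula is essentially forced, but the sign and index conventions must be handled with care.
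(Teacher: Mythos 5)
The paper does not actually prove this lemma: it is imported verbatim from \cite{Z09} with the one-line attribution ``the following lemma was proved in \cite{Z09}.'' So there is no in-paper proof to compare against; what you have supplied is a self-contained derivation, and its architecture is correct. The Hessian of $\mathcal L_{+,2r}$ at $0$ really is $\frac1L M$ with $M$ the symmetric circulant tridiagonal matrix with diagonal $2\beta$, $\beta=1+Lf''_+(0)=-a/2$, and off-diagonal entries $-1$; inverting $M$ either by the two-arc Chebyshev Green's function for the recurrence $v_{p+1}-2\beta v_p+v_{p-1}=0$ or by the discrete Fourier diagonalization (eigenvalues $2\beta-2\cos(\pi k/r)$) yields exactly $\frac{U_{2r-q+p-1}(\beta)+U_{q-p-1}(\beta)}{2(T_{2r}(\beta)-1)}$; I checked this against the explicit elliptic/hyperbolic formulas the paper records immediately after the lemma, and numerically on the $(1,1)$ entry for $r=2$, and it is consistent.

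Two small slips in your write-up are worth fixing, though neither affects the conclusion. First, your displayed $2\times2$ chord block is not quite right: differentiating $d=\sqrt{(v-u)^2+(g(v)-h(u))^2}$ twice gives the diagonal entry $\frac1L\bigl(1-(g(0)-h(0))\,h''(0)\bigr)$, i.e.\ $\frac1L(1+Lf''_+(0))$ at an upper vertex but $\frac1L(1-Lf''_-(0))$ at a lower vertex --- the sign in front of $f''$ flips with the orientation of the chord. It is precisely this flip that makes the hypothesis $f''_+(0)=-f''_-(0)$ equalize the two diagonal entries to $\beta$; with the block as you literally wrote it ($1+Lf''_{w_+(p)}(0)$ in both slots), the hypothesis would instead produce alternating entries $\beta$ and $2-\beta$, and the matrix would not be circulant. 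Second, the parenthetical identity should read $T_{2r}=\tfrac12\bigl(U_{2r}-U_{2r-2}\bigr)$, not $\tfrac12\bigl(U_{2r-1}-U_{2r-3}\bigr)$; the cleaner route you also indicate, via $\cos(2r\theta)-1=-2\sin^2(r\theta)$ with $\beta=\cos\theta$, is the correct one and is what actually collapses the denominator.
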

As a result, for the two choices of $f''(0)$ in terms of $\alpha$ (See \eqref{roots of quadratic}), we get that 
\begin{itemize}
	\item If $1 + L f_+''(0) = \cos(\alpha /2) ( \text{or}\; \cosh (\alpha/2) \; \text{in the hyperbolic case})$, then 
\begin{equation*}  \label{hpq+}  
h_{2r}^{pq}
=  \begin{cases} - \left ( \frac{L}{ 2\sin (\alpha/2)} \right ) \frac{\cos \left ( {( r - q + p ) \alpha/2} \right ) } {\sin (r \alpha/2)}  & \text{(elliptic case)} \\  \\
\left ( \frac{L}{ 2\sinh (\alpha/2)} \right ) \frac{\cosh \left ( {( r - q + p ) \alpha/2} \right ) } {\sinh (r \alpha/2)} & \text{(hyperbolic case)}. \end{cases}
\end{equation*}
	\item If $1 + L f_+''(0) = - \cos(\alpha /2) ( \text{or}\; - \cosh (\alpha/2) \; \text{in the hyperbolic case})$, then 
\begin{equation*}  \label{hpq-}  
h_{2r}^{pq}
=  \begin{cases} - \left ( \frac{L  (-1)^{p - q}}{ 2\sin (\alpha/2)} \right ) \frac{\cos \left ( {( r - q + p ) \alpha/2} \right ) } {\sin (r \alpha/2)}  & \text{(elliptic case)} \\  \\
\left ( \frac{L  (-1)^{p - q}}{ 2\sinh (\alpha/2)} \right ) \frac{\cosh \left ( {( r - q + p ) \alpha/2} \right ) } {\sinh (r \alpha/2)} & \text{(hyperbolic case)}. \end{cases}
\end{equation*}
\end{itemize}
We observe that, up to multiplication by $(-1)^{p-q}$, the expressions \eqref{hpq+} and \eqref{hpq-} are identical.
It is also obvious from Lemma \ref{hpq} that,
\begin{coro} If $f''_+(0) =  -f_-''(0)$, then for all $1\leq q \leq 2r$: $$h_{2r}^{qq} = h_{2r}^{11}.$$
\end{coro}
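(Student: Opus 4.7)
The plan is to read off the corollary directly from Lemma \ref{hpq} by specializing $p = q$. Setting $p = q$ in the expression
$$h^{pq}_{2r} = \frac{L}{2\left(T_{2r}(-a/2) - 1\right)} \left( U_{2r - q + p - 1}(-a/2) + U_{q - p - 1}(-a/2) \right),$$
the second Chebychev term becomes $U_{-1}(-a/2)$. Using the standard convention for Chebychev polynomials of the second kind, $U_{-1}(\cos\theta) = \sin(0)/\sin\theta = 0$, so this term drops out. The first term becomes $U_{2r-1}(-a/2)$, which is manifestly independent of $q$. Hence
$$h^{qq}_{2r} = \frac{L \, U_{2r-1}(-a/2)}{2\left(T_{2r}(-a/2) - 1\right)},$$
and in particular this common value equals $h^{11}_{2r}$.

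As a sanity check, I would verify using the case-by-case formulas \eqref{hpq+} and \eqref{hpq-} derived above: setting $p = q$ the factor $(-1)^{p-q}$ equals $1$, and the argument $(r - q + p)\alpha/2$ collapses to $r\alpha/2$, giving
$$h^{qq}_{2r} = -\frac{L}{2\sin(\alpha/2)} \cdot \frac{\cos(r\alpha/2)}{\sin(r\alpha/2)}$$
in the elliptic case, with the analogous hyperbolic formula obtained by replacing $\sin, \cos$ by $\sinh, \cosh$. These expressions visibly do not depend on $q$, confirming the identity and showing it holds uniformly in both the elliptic and hyperbolic regimes and for both roots of the quadratic \eqref{quadratic equation}.

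There is no real obstacle here: the corollary is purely a cyclic-symmetry observation built into Lemma \ref{hpq}. The underlying geometric reason is that, when $f''_+(0) = -f''_-(0)$, the bouncing ball orbit enjoys a reflection symmetry exchanging the two endpoints, so the second differential of $\mathcal{L}_{+, 2r}$ inherits a cyclic invariance under index shifts that preserves the diagonal of $H_{2r}^{-1}$. The substitution above merely records this algebraically.
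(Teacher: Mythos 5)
Your proof is correct and is exactly the argument the paper has in mind: the paper simply states the corollary is ``obvious from Lemma \ref{hpq}'' because the formula for $h^{pq}_{2r}$ depends only on $q-p$, and your specialization $p=q$ (with $U_{-1}=0$) makes that explicit. The sanity check against the case-by-case formulas is consistent with the paper as well.
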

In general and without the assumption $f''_+(0) =  -f_-''(0)$, we know that $h_{2r}^{qq} = h_{2r}^{11}$ for $q$ odd, and $h_{2r}^{qq} = h_{2r}^{22}$ for $q$ even.

The following expressions for the wave invariants $\{ b_{\gamma^r, j} \}$  were found by the second author \cite{Z09} (denoted by $B_{\gamma, j}$ in that article). Although the main result of \cite{Z09} is concerned with up-down symmetric domains but the following result was proved 
for general smooth domains with a non-degenerate bouncing ball orbit. It does not assume that $f''_+(0) =  -f_-''(0)$.

\begin{theo} \label{b_j expression} Let $\Omega$ be a smooth domain with a bouncing
	ball orbit   $\gamma$  of length $ L_{\gamma}$. Let $r \in \mathbb N$ and assume $\gamma^r$ is non-degenerate. Then for $j >1$:
	\begin{equation*}
	 \begin{array}{l}  b_{\gamma^r, j - 1}  = 4 r L {\mathcal A}_0(r)  \Big( 2 r w_{{\mathcal G}_{1,
				j}^{2j, 0}} ((h^{11}_{2r} )^j f^{(2j)}_{+}(0) - (h^{22}_{
			2r})^j f^{(2j)}_{-}(0))  \\ \\ + 4 {\displaystyle \sum}_{q, p = 1 }^{2r}
	 \left (w_{{\mathcal G}_{2,  j + 1 }^{2j - 1, 3,
	 		0}}  (h^{pp}_{2r})^{j
			- 1} h^{qq}_{2r} h^{pq}_{2r} + (w_{\widehat{{\mathcal
					G}}_{2, j + 1 }^{2j - 1, 3,
				0}}) (h^{pp}_{2r})^{j - 2}
		(h^{pq}_{2r})^3 \right ) w_+(p) w_+(q) f_{w_+(p)}^{(2j - 1)}(0)
		f_{w_+(q)}^{(3)}(0) \Big)
		\\ \\ + R_{2r} ({\mathcal
			J}^{2j - 2} f_+(0), {\mathcal J}^{2j - 2} f_-(0)),\end{array} 	\end{equation*}
		where the remainder $ R_{2r} ({\mathcal J}^{2j - 2} f_+(0),
		{\mathcal J}^{2j - 2} f_-(0))$ is a polynomial in the designated
		jet of $f_{\pm}.$ Here, $w_+(p) = (-1)^{p+1}$ and $w_{\gcal} = \frac{1}{|Aut (\gcal)|}$ are
		combinatorial factors independent of $\Omega$ and $r$. The function ${\mathcal A}_0(r)$ is a non-zero function of $r$ and is independent of $\Omega$ and $j$.
	\end{theo}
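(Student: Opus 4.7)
The strategy is the standard Balian--Bloch / Feynman-diagram stationary phase expansion applied to the billiard length functional $\mathcal L_{+, 2r}$, with attention restricted to those diagrams producing the top-order jets $f^{(2j)}_\pm(0)$ and $f^{(2j-1)}_\pm(0)\, f^{(3)}_\pm(0)$; all lower jets are packaged into the remainder $R_{2r}$. This is exactly the computation carried out in \cite{Z09}, and I would follow its plan.

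First, via the microlocal billiard parametrix, I would rewrite the localized wave trace near $t = rL_\gamma$ as a finite-dimensional oscillatory integral
\begin{equation*}
\int_0^\infty \hat\rho(t)\, e^{ikt}\, w_B^\Omega(t)\, dt \;\sim\; \int_{\R^{2r}} e^{ik\, \mathcal L_{+, 2r}(x)}\, a_B(x; k)\, dx,
\end{equation*}
whose phase is the perimeter of the $2r$-link inscribed polygon with vertices $(x_p, f_{w_+(p)}(x_p))\in \d\Omega$ and whose amplitude $a_B$ is a polyhomogeneous symbol built from the reflection coefficients dictated by $B$. The unique critical point in $\supp \hat\rho$ is $x = 0$, i.e.\ $\gamma^r$; non-degeneracy of $P_{\gamma^r}$ is equivalent to $\det H_{2r}\neq 0$, and the signature of $H_{2r}$ yields the Maslov factor. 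Leading-order stationary phase produces the symplectic prefactor $F_{B, \gamma^r}(k)$, and $\mathcal A_0(r)$ absorbs the $r$-dependent ratio of $|\det H_{2r}|^{-1/2}$ to $|\det(I - P_{\gamma^r})|^{-1/2}$, which is universal in $\Omega$ and $j$.

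Next, I would apply the all-orders Feynman-diagram formula. The coefficient of $k^{-(j-1)}$ becomes a weighted sum over labeled multigraphs $\mathcal G$: a vertex at site $p \in \{1,\dots, 2r\}$ of valence $v \geq 3$ is labeled by $f^{(v)}_{w_+(p)}(0)$ and carries the chain-rule sign $w_+(p) = (-1)^{p+1}$ inherited from differentiating the square roots in $\mathcal L_{+, 2r}$ along the normal coordinate; each edge $\{p, q\}$ carries an inverse-Hessian entry $h^{pq}_{2r}$; and the global weight is $1/|\mathrm{Aut}(\mathcal G)|$. A graph with $V$ vertices and $E$ edges contributes to order $k^{-(E - V)}$, so $b_{\gamma^r, j - 1}$ is a finite sum over graphs with $E - V = j - 1$. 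Power counting caps the top vertex valence at $2j$, and only three topologies can realize $2j$ or the submaximum $(2j - 1, 3)$: the one-vertex graph $\mathcal G_{1, j}^{2j, 0}$ consisting of a single vertex with $j$ self-loops, and the two two-vertex graphs $\mathcal G_{2, j + 1}^{2j - 1, 3, 0}$ (one bridge between the two vertices, the remaining legs forming loops) and $\widehat{\mathcal G}_{2, j + 1}^{2j - 1, 3, 0}$ (three parallel edges with $j - 2$ loops at the high-valence vertex). All other diagrams of this order have vertex valences $\leq 2j - 2$ and are polynomial in $(\mathcal J^{2j - 2} f_+(0), \mathcal J^{2j - 2} f_-(0))$, so they go into $R_{2r}$.

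Finally, evaluating the three surviving graphs yields the stated formula. For $\mathcal G_{1, j}^{2j, 0}$, the sum $\sum_{p = 1}^{2r} (h^{pp}_{2r})^j f^{(2j)}_{w_+(p)}(0)$ splits by parity of $p$ using $h^{pp}_{2r} = h^{11}_{2r}$ for odd $p$ and $h^{pp}_{2r} = h^{22}_{2r}$ for even $p$; tracking the alternating sign $w_+(p)$ produced by the chain rule gives the combination $(h^{11}_{2r})^j f^{(2j)}_+(0) - (h^{22}_{2r})^j f^{(2j)}_-(0)$ multiplied by $2r w_{\mathcal G_{1, j}^{2j, 0}}$. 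For the two two-vertex topologies, a double sum over ordered pairs $(p, q)$ with factor $w_+(p) w_+(q) f^{(2j - 1)}_{w_+(p)}(0) f^{(3)}_{w_+(q)}(0)$ produces the other explicit contributions, with $(h^{pq}_{2r})^3$ arising from the three parallel edges of $\widehat{\mathcal G}_{2, j + 1}^{2j - 1, 3, 0}$. The main obstacle is purely combinatorial bookkeeping: correctly determining the automorphism weights $w_\mathcal G$, the numerical prefactors $4rL$, $2r$ and $4$ arising from the two parity classes of vertices and from summing over ordered pairs, and the accumulated chain-rule signs; and verifying that no oversight allows a graph of order $E - V = j - 1$ with a vertex of valence $\geq 2j - 1$ other than the three listed. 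These details, together with the identification of $\mathcal A_0(r)$, are carried out in \cite{Z09}.
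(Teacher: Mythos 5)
The paper does not actually prove this theorem: it imports it verbatim from \cite{Z09}, where it is established by precisely the Balian--Bloch stationary-phase and Feynman-diagram computation you outline (oscillatory integral with phase $\mathcal L_{+,2r}$, inverse-Hessian propagators $h^{pq}_{2r}$, graphs with $E-V=j-1$, and the three top-valence topologies surviving outside the remainder). Your sketch matches that argument in all essentials --- the only cosmetic difference is that \cite{Z09} reaches the finite-dimensional oscillatory integrals through the layer-potential Neumann-series expansion of the smoothed resolvent rather than a wave parametrix, after which the stationary-phase and combinatorial analysis proceeds exactly as you describe.
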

For the sake of simplicity of our notations, from now on we set:
$$ \tilde{C}_j:= w_{{\mathcal G}_{1,
		j}^{2j, 0}}, \quad {C}_j:= w_{{\mathcal G}_{2,  j + 1 }^{2j - 1, 3,
		0}} , \quad  \hat{C}_j:=  w_{\widehat{{\mathcal
			G}}_{2, j + 1 }^{2j - 1, 3,
		0}}.$$ 
		
		The following proposition will become very useful in the next section. It gives a description of the Maslov index in terms of the signature of the Hessian of the length functions $\mathcal L_+$.
		
		\begin{prop} \label{maslov} The Maslov index $m_{\gamma^r}$ associated to $\gamma^r$, the $r$-th iteration of a non-degenerate bouncing ball orbit $\gamma$, appearing in the prefactor of the wave trace expansion, is given by 
			$$m_{\gamma^r} = \ell_{r} + \text{sgn} (\text{Hess} \, \mathcal L _+) \qquad (\text{mod}\; 8),$$
where $\ell_{r}$ is an integer that depends only on $2r$ and is independent of $\gamma^{r}$. 
 		\end{prop}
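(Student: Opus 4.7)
My strategy is to derive the expansion \eqref{b_j} by the method of stationary phase applied to the parametrix representation of the wave trace, and read off $m_{\gamma^r}$ as the sum of two contributions: (i) a signature coming from stationary phase, and (ii) intrinsic Maslov factors built into the microlocal parametrix. Following the Chazarain / Guillemin--Melrose construction used in \cite{Z09}, the localized wave trace $\int \hat\rho(t) e^{ikt} w_B^\Omega(t)\,dt$ is represented near $t=2rL$ as an oscillatory integral
\[
\int_{\R^{2r}} e^{ik\,\mathcal{L}_+(x_1,\ldots,x_{2r})}\,A(x;k)\,dx,
\]
where the phase is the length functional \eqref{LPM} and $\gamma^r$ corresponds to the non-degenerate critical point at the origin. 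The amplitude $A$ is a classical symbol in $k$ whose leading part is a product of reflection symbols and half-density factors attached to the $2r$ boundary crossings.

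Applying the stationary phase lemma at the origin produces the factor
\[
(2\pi/k)^{r}\,\frac{e^{i\pi\,\text{sgn}(\text{Hess}\,\mathcal{L}_+)/4}}{\sqrt{|\det \text{Hess}\,\mathcal{L}_+|}}\,A(0;k).
\]
Matching this with the stated form of $F_{B,\gamma^r}$ and using the classical identity $|\det \text{Hess}\,\mathcal{L}_+| = c_r\,|\det(I-P_{\gamma^r})|$ for a positive constant $c_r$ depending only on $L$ and $r$, the Maslov phase $e^{i\pi m_{\gamma^r}/4}$ factors as $e^{i\pi\,\text{sgn}(\text{Hess}\,\mathcal{L}_+)/4}$ multiplied by a phase read off from $A(0;k)$ and from the sign conventions used in taking the square root of $c_r$. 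Since each bounce of $\gamma^r$ is perpendicular and transverse, the reflection factor contributed by $A$ at each of the $2r$ bounces is the universal phase determined purely by the boundary condition $B$; consequently the combined residual phase depends only on $2r$. Denoting its argument by $\ell_r\pi/4$ gives $m_{\gamma^r}\equiv \ell_r + \text{sgn}(\text{Hess}\,\mathcal{L}_+)\pmod 8$.

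The main obstacle is verifying rigorously that every phase contribution arising from the parametrix (half-density pairings, symbol normalizations, orientation conventions) really is independent of the specific geometry of $\partial\Omega$ near $\gamma$, so that everything apart from $\text{sgn}(\text{Hess}\,\mathcal{L}_+)$ genuinely collapses into the universal $\ell_r$. The cleanest way to handle this would be to deform $\Omega$ continuously through a family of smooth domains keeping $\gamma$ as a non-degenerate bouncing ball orbit: both sides of the claimed identity are locally constant in such a family away from the codimension-one degenerate locus, so it suffices to check the identity on a single explicit model (e.g.\ a strip bounded by two parallel circular arcs of prescribed common curvature) where the parametrix can be evaluated by hand.
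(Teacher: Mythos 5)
Your approach is essentially the same as the paper's: the paper's proof simply cites the stationary-phase computation in the proof of Lemma 2.3 of \cite{Z09}, which yields exactly the factorization $e^{i\pi m_{\gamma^r}/4} = (-i)^{2r}\left(e^{-3\pi i/4}\right)^{2r} e^{i\pi\,\mathrm{sgn}(\mathrm{Hess}\,\mathcal{L}_+)/4}$, i.e.\ the signature term from stationary phase on the length functional plus universal per-bounce phases from the Green's function and reflection coefficients, which together constitute your $\ell_r$. The only difference is that the paper records the explicit value of $\ell_r$ from \cite{Z09} rather than appealing to a deformation-to-a-model argument to verify universality of the amplitude phases.
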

 	\begin{proof} This identity, although not explicitly stated, is implicit in the proof of Lemma 2.3 of \cite{Z09} (See page 233). In fact it reveals that the Maslov factor in the prefactor is given by 
 	$$e^{i \pi \, m_{\gamma^r}/4} = (-i)^{2r} \left ( e^{-3\pi i /4}\right )^{2r} e^{ i \pi  \, \text{sgn} (\text{Hess} \, \mathcal L _+) /4} .$$ 

 Hence, in fact in Proposition \ref{maslov} we have $\ell_ r$.   \end{proof}
\begin{rema}  The Maslov index above combines phases from three sources: The signature of the Hessian (which of course arises
from stationary phase), and  from phases in the Dirichlet (resp. Neumann) Green's function. See \cite{BB72, CRL, Z09} for further
discussion. 

From the above discussion one can actually deduce that $m_{\gamma^r}$ is an even integer. This is consistent with the statement of Guillemin-Melrose \cite{GuMe79} where the Maslov factor is in the form $i^{ m'}$. To see this, let $n_+$ and $n_-$ be the number of positive and  negative eigenvalues of $\text{Hess} \, \mathcal L _+ $, respectively. Since $n_+ = 2r - n_-$, we obtain that 
\begin{align*} e^{i \pi \, m_{\gamma^r}/4} & = (-i)^{2r} \left ( e^{-3\pi i /4}\right )^{2r} e^{ i (2r - 2n_-) \pi /4}  \\
                                                                           & =  e^{6r i \pi /4}  e^{- 2r i \pi /4} e^{- 2i n_- \pi /4}\\        
                                                                            & = i^{2r - n_-}  \\
                                                                            & = i^{n_+}.\end{align*}
                                                                            
Therefore, it appears that the Maslov index in  the trace asymptotic of Guillemin-Melrose \cite{GuMe79}  is given by 
$$m'_{\gamma^r}= \frac{m_{\gamma^r}}{2} =  n_+ \qquad (\text{mod} \; 4).$$ Note that the above calculation can be imitated, thus is also valid, for any non-degenerate periodic orbit and not only for iterations of a bouncing ball orbit. 

%{\blue In  \cite[(2.17)]{BB72} (and \cite[Appendix A]{BB72}),  it is calculated that  $\rm{arg} (\det \rm{Hess} \lcal_+) = n^- \pi$ where $n_-$ is %the number of negative eigenvalues of the Hessian. Hence, $(\det \; \rm{Hess} \lcal_+)^{-\half} = e^{- \frac{i n^- \pi}{2}} = i^{- n_-}$. For a 
%$2r$ link orbit, $i^{ n_+} = i^{2r - n_-} = (-1)^r i^{- n_-}$, agreeing with the answer above.

%}

%One has,  \begin{equation} \label{SIG} e^{\frac{i \pi}{4} \rm{sig} Hess} = e^{\frac{i \pi}{4} (n_+ - n_-)} = 
%  e^{\frac{i \pi}{4}(p - 2n^-)} = e^{\frac{i \pi}{4} p} e^{- \frac{i \pi}{2} n^-}, \end{equation}
%  where $p$ is the number of sides of the polygon. }
\end{rema}

\section{Centrally symmetric plane domains; Proof of Theorem \ref{ONESYM}}

We recall that a centrally symmetric domain contains the origin $O=(0, 0)$, and has a central symmetry $\sigma(x, y) = (-x, -y)$ fixing $O$. We have the following simple lemma. 

\begin{lemm}
Every smooth simply connected centrally symmetric domain $\Omega$ has at least one bouncing ball orbit that goes through $O$. If in addition $\Omega$ is star-shaped about the origin $O$, then $\Omega$ has at least two such bouncing ball orbits. 
\end{lemm}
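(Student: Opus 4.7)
The plan is to produce bouncing ball orbits through $O$ by locating boundary points where the outward normal to $\partial\Omega$ is radial (i.e., points to/away from $O$), and then using the central symmetry $\sigma$ to furnish the antipodal endpoint automatically.

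Before anything else, I would verify that $O \in \Omega$. Pick $P \in \Omega$; then $-P = \sigma(P) \in \Omega$, and a continuous path $\gamma \subset \Omega$ from $P$ to $-P$ together with its $\sigma$-image $\sigma \circ \gamma$ forms a closed loop in $\Omega$. A quick computation with a continuous lift of the argument shows this loop has odd winding number about $O$. By simple connectivity it bounds a disk in $\Omega$, so if $O \notin \Omega$ we would have a loop in $\mathbb{R}^2 \setminus \{O\}$ of nonzero winding number bounding a disk there, a contradiction. Hence $O \in \Omega$ and $d := \operatorname{dist}(O, \partial\Omega) > 0$.

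For the first (general) orbit I would use the inscribed disk at $O$. Let $P_0 \in \partial\Omega$ realize the minimum of $|P|$ on $\partial\Omega$, so $B(O, d) \subset \Omega$ is internally tangent to $\partial\Omega$ at $P_0$. Tangency of two smooth curves sharing a common tangent forces the outward unit normal to $\partial\Omega$ at $P_0$ to coincide with the outward unit normal to $\partial B(O, d)$, which is radial. By central symmetry $-P_0 \in \partial\Omega$ has the same property. The segment $[-P_0, P_0]$ is a diameter of $\overline{B(O, d)}$; its interior lies strictly inside $B(O, d) \subset \Omega$ and its endpoints are on $\partial\Omega$, so it is a bouncing ball orbit through $O$.

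For the second orbit in the star-shaped case I would use the circumscribed circle as well. Star-shapedness lets us parametrize $\partial\Omega$ in polar coordinates by a smooth function $r(\theta)$, which is $\pi$-periodic by central symmetry. If $r$ is constant then $\Omega$ is a disk and every diameter is a bouncing ball orbit, so assume not. Then on $[0, \pi)$ the function $r$ attains a strict maximum at some $\theta^\ast$ necessarily different from the angle $\theta_0$ of $P_0$ above (since $r(\theta^\ast) > r(\theta_0)$). Now $\Omega \subset B(O, r(\theta^\ast))$ with internal tangency of $\partial\Omega$ to the circumscribing circle at $r(\theta^\ast)\hat\theta^\ast$, and the same tangency argument gives a radial outward normal there. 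By star-shapedness the chord through $O$ in direction $\theta^\ast$ lies in $\overline\Omega$ with interior in $\Omega$, so it is a second bouncing ball orbit, distinct from the first because their directions differ mod $\pi$.

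The main obstacle, and the reason I would organize the argument this way, is ensuring the putative chord actually lies inside $\Omega$. Naively one might use the farthest boundary point $P_{\max}$ rather than the nearest $P_0$, but without star-shapedness the segment $[-P_{\max}, P_{\max}]$ can exit $\Omega$, so the resulting chord is not a valid billiard trajectory. The inscribed-disk trick avoids this issue in full generality because the segment is trapped inside $B(O, d) \subset \Omega$, and the circumscribed-disk construction is only invoked under star-shapedness, precisely when radial chords from $O$ are guaranteed to stay in $\Omega$.
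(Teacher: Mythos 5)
Your proof is correct and follows essentially the same route as the paper: both arguments extremize the distance to the origin on $\partial\Omega$ (the minimum in general, the maximum additionally in the star-shaped case), obtain orthogonality of the radial chord at the extremal point, and let the central symmetry supply the antipodal endpoint. The only substantive difference is in verifying that the minimal chord lies inside $\Omega$ in the non-star-shaped case — you trap it in the inscribed disk $B(O,d)\subset\Omega$, while the paper argues that otherwise a proper sub-segment through $O$ would contradict minimality — and your version of that step is, if anything, cleaner.
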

\begin{figure}[H]
	\begin{minipage}{.5\textwidth}
		\begin{tikzpicture}[scale=0.50]
		
		\draw [black] plot [smooth cycle] coordinates {(2, 0) (2, -2) ( 1, -3) (-2, -2) (-2, -1) (.5, -1.5) (.5, -1) (-2, 0) (-2, 2) (-1, 3) (2,2)(2, 1) (-.5, 1.5) (-.5, 1)};
		\draw[blue] (1.2,-3)--(-1.2,3);
		\draw[red] (.19,.79)--(-0.19,-.79);
		\node[red] at (-.5, -.3) [align=center]{$\gamma$};
		
		\draw (0,0) node{$\bullet$};
		\node at (.5, 0 ) [align=center]{$O$};
		\end{tikzpicture}
		\centering
		\caption{A centrally symmetric domain that is not star-shaped. The diameter is not a bouncing ball orbit but the minimizer of $D(P)$ provides a bouncing ball orbit. }
		% \label{}
	\end{minipage}%
	\begin{minipage}{.5\textwidth}
		\begin{tikzpicture}[scale=0.50]
		%	\draw [gray!50]  (4,0.5) -- (3, 2) -- (1, 2.8) --(0, 3)-- (-1,2.8)  -- (-2.8,1.5) -- (-4,-0.5) -- (-3,-2) -- (-1, -2.8) --(0, -3)-- (1, -2.8)  -- (2.8,-1.5)--cycle;
		\draw [black] plot [smooth cycle] coordinates {(4,0.5) (2.9,1.9) (1.5, 2.7) (0, 3) (-1.5,2.7)  (-2,1.5) (-4,-0.5) (-2.9,-1.9) (-1.5, -2.7) (0, -3) (1.5, -2.7) (2,-1.5)};
		\draw[red] (0,3)--(0,-3);
		%\node[red] at (.3, 1.7) [align=center]{$\gamma$};
		\draw[blue] (4, .66)--(-4,-.66);
		\draw (0,0) node{$\bullet$};
		\node at (.5, .6 ) [align=center]{$O$};
		\end{tikzpicture}
		\centering
		\caption{A star-shaped centrally symmetric domain always has at least two bouncing ball orbits.}
		%	\label{}
	\end{minipage}
\end{figure}
\begin{proof} First let us assume that $\Omega$ is star-shaped about $O$. Thus for each $P$ on $\d \Omega$ the line segment $\overline{(-P)P}$ lies inside $\Omega$.  Consider the  maximum and minimum points of $D(P) = d(P, -P)^2$ on $\partial \Omega$. Of course, $D(P) = 4 |P|^2$. Its critical
points occur when $ \langle c(0), \dot{c}(0) \rangle = 0$ where $c(t)$ is an arc-length parameterization of $\partial \Omega$ with  $c(0)= P$. Thus, the radial line $\overline{0 P}$ from $0$ to $P$ is orthogonal  to $\partial \Omega$ at $P$. Since $D(P)$ is symmetric
in $P, -P$ the same statements hold for $-P$, and therefore the line segment  $\overline{(-P) P}$ is the trace of a  bouncing ball orbit, denoted by $\gamma$. 
That is, $\gamma$ is the orbit in $T^*\Omega$ which projects to $\overline{(-P) P}$; it shuttles back and forth between $P$ and $-P$, covering
the segment twice.

Now assume $\Omega$ is not necessarily star-shaped about $O$. Since $\Omega$ is simply connected and centrally symmetric it must contain $O$ (by the Brouwer fixed point theorem). Let $P$ be the minimizer of $D(P)$ and consider the segment $\overline{(-P) P}$ (which apriori
could go outside of $\Omega$) . Since the segment contains $O$,  it must have a non-empty intersection with $\Omega$. If the entire segment does not lie inside $\Omega$ we get a contradiction that it is a minimizer of $D(P)$, because otherwise the maximal connected part of the segment that contains $O$ and intersects $\Omega$ would have a smaller length than $\overline{(-P) P}$. 
\end{proof}

\begin{rema} 
It is proved in \cite[Corollary 1.3]{Gh04} that if $\Omega$ is centrally symmetric and convex, then each of the shortest periodic billiard trajectories in $\Omega$ is a bouncing ball orbit. However, we do not assume convexity in our statements (except for Section \ref{CONVEX}), yet the central symmetry provides the existence of at least one bouncing ball orbit. 
\end{rema}

Among the bouncing ball orbits, we pick one which satisfies the assumptions of $\dcal_{L}$ (where $2L$ is the length of the orbit), and denote
it by $\gamma.$  As in the introduction, since $0 \in \gamma$,  we may rotate  $\Omega$ to make $\gamma$ the vertical  `$y$'-axis.  The orthogonal line through $0$ is also $\sigma$-invariant and we refer to it as the $x$-axis.

Then  locally near the vertices of $\gamma$, $\partial \Omega$ consists of two graphs, $$\partial \Omega = \{y = f_+(x)\} \cup \{y = f_-(x)\}, $$ lying above, resp. below, the horizontal axis. 
If $F(x,y) = 0$ is a defining function of $\partial \Omega$, then
the central symmetry implies that $F(-x, -y) = 0 \iff F(x,y) = 0$. Hence, if $y = f_+(x)$ then $-y = f_-(-x),$ so that
\begin{equation} f_-(-x) = - f_+(x). \end{equation}

\subsection{Wave invariants of centrally symmetric domain}

We shall follow the results in \cite{Z09} regarding the expressions for the wave trace invariants of $\Omega$, which we reviewed in the previous section. The centrally symmetric domain of this article is not `up-down' symmetric across the axis $\gamma$,
which would be the symmetry $f_-(x) = - f_+(x)$ but rather $f_-(-x) = - f_+(x)$. We now consider how this impacts the expressions for the wave invariants of a centrally symmetric domain. First we observe that for $k =0, 1, 2, \dots$, we have
\begin{equation} \label{taylor coeffs relations}
f^{(2k)} _-(0) = -  f^{(2k)}_+(0), \quad 
f^{(2k+1 )} _-(0) =   f^{(2k+1)}_+(0)
\end{equation}

\begin{rema} Comparing $k$th derivatives in the cases $f_-(x) = - f_+(x)$, resp.  $f_-(-x) = - f_+(x)$, the only difference is the extra factor of $(-1)^k$
in the centrally symmetric case. Hence, the even derivatives are identical and odd derivatives differ by a factor of $-1$. In the next Proposition, the
universal formulae for top derivative terms of wave invariants of bouncing ball orbits at a given order are exactly the same for
the up-down and the centrally symmetric cases.   \end{rema}

Using the fact that $h_{2r}^{pq}$ is a function of $q-p$ (see Lemma \ref{hpq}), and applying the relations \eqref{taylor coeffs relations} to Theorem \ref{b_j expression}, we obtain:
 \begin{coro} \label{b_j corollary} The wave trace invariant for $\gamma^r$ are given by
$$ \begin{array}{lll} b_{\gamma^r, j -
 1} & = &    4 L  r {\mathcal A}_r(0) \Big ( 2 r  \widetilde{C}_j \;
 (h_{2r}^{11})^j \; f_+^{(2j)}(0) \\ && \\ && + 8 r  C_j \;
 (h_{2r}^{11})^j \sum_{q = 1}^{2r} h_{2r}^{1q} \; f_+^{(3)}(0)  f_+^{(2j - 1)}(0) \\ && \\
 && +  8r \widehat{C}_j \; (h_{2r}^{11})^{j - 2}
 \sum_{q = 1}^{2r} (h_{2r}^{1q})^3 \; f_+^{(3)}(0)  f_+^{(2j - 1)}(0)\Big ) \\ && \\
 && +  R_{2r} ({\mathcal J}^{2j - 2} f_+(0)).\end{array}  $$
  \end{coro}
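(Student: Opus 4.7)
The plan is to take the general formula in Theorem \ref{b_j expression} and substitute the three structural consequences of central symmetry: the Taylor-coefficient relations \eqref{taylor coeffs relations}, namely $f^{(2k)}_-(0) = -f^{(2k)}_+(0)$ and $f^{(2k+1)}_-(0) = f^{(2k+1)}_+(0)$; the constancy $h^{qq}_{2r} = h^{11}_{2r}$ for every $q$, which follows from the Corollary stated immediately after Lemma \ref{hpq} because central symmetry forces $f''_-(0) = -f''_+(0)$; and the translation invariance of $h^{pq}_{2r}$ in the cyclic difference $q - p$ provided by Lemma \ref{hpq}.

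For the first summand of Theorem \ref{b_j expression}, the even-derivative relation combined with $h^{22}_{2r} = h^{11}_{2r}$ immediately collapses $(h^{11}_{2r})^j f^{(2j)}_+(0) - (h^{22}_{2r})^j f^{(2j)}_-(0)$ to $2 (h^{11}_{2r})^j f^{(2j)}_+(0)$. For the double sum, the odd-derivative relations eliminate the $\pm$-dependence of the derivative factors, turning $f^{(2j-1)}_{w_+(p)}(0) f^{(3)}_{w_+(q)}(0)$ into the sign-free quantity $f^{(2j-1)}_+(0) f^{(3)}_+(0)$. Using $h^{pp}_{2r} = h^{qq}_{2r} = h^{11}_{2r}$, I would then factor $(h^{11}_{2r})^j$ out of the $C_j$-summands and $(h^{11}_{2r})^{j-2}$ out of the $\widehat C_j$-summands, leaving an inner factor $w_+(p) w_+(q) h^{pq}_{2r} = (-1)^{p+q} h^{pq}_{2r}$ (respectively with $(h^{pq}_{2r})^3$ in place of $h^{pq}_{2r}$).

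The reduction from a double to a single sum then rests on the observation that $(-1)^{p+q} = (-1)^{q-p}$, so the integrand is a function of $q-p$ alone; summing over $p$ at fixed difference produces the factor $2r$ and leaves a single sum in $q$. After combining the $(-1)^{p+q}$ with the internal $(-1)^{p-q}$ sign that may appear in one branch of the formula for $h^{pq}_{2r}$ (Lemma \ref{hpq}), this single sum can be recorded as $\sum_{q=1}^{2r} h^{1q}_{2r}$ (respectively $\sum_{q=1}^{2r} (h^{1q}_{2r})^3$). Collecting the constants, relabelling by $\tilde C_j, C_j, \widehat C_j$, and absorbing all lower-order Taylor pieces into the universal remainder $R_{2r}$ yields the displayed expression for $b_{\gamma^r, j - 1}$. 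The only delicate step in this outline is the sign bookkeeping across the two branches $1 + L f''_+(0) = \pm \cos(\alpha/2)$ (and the corresponding hyperbolic versions), but this is pure parity arithmetic and requires no analytic input beyond what is already recorded in Theorem \ref{b_j expression} and Lemma \ref{hpq}.
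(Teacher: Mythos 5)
Your derivation is essentially the paper's own proof: the corollary is obtained there in exactly one sentence, by substituting the symmetry relations \eqref{taylor coeffs relations} into Theorem \ref{b_j expression} and using that $h^{pq}_{2r}$ depends only on $q-p$, which is precisely your plan. The only substantive remark concerns the ``parity arithmetic'' you defer at the end, which is in fact the one step that is not purely formal. Because central symmetry makes the odd derivatives \emph{agree}, i.e.\ $f^{(2j-1)}_{w_+(p)}(0)=f^{(2j-1)}_+(0)$ for every $p$, the factor $w_+(p)w_+(q)=(-1)^{p+q}$ in Theorem \ref{b_j expression} is not cancelled by the derivative factors (as it is in the up--down symmetric case, where the odd derivatives flip sign in step with $w_+$); it survives the collapse of the double sum, which therefore literally produces $2r\sum_{q}(-1)^{q-1}h^{1q}_{2r}$ and $2r\sum_{q}(-1)^{q-1}(h^{1q}_{2r})^3$ rather than the unsigned sums displayed. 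The identification with unsigned sums goes through $U_n(-x)=(-1)^nU_n(x)$ applied to Lemma \ref{hpq}, which gives $(-1)^{q-1}h^{1q}_{2r}(a)=-\,h^{1q}_{2r}(-a)$: the alternating sum is, up to an overall sign, the plain sum at the dual parameter $-a$. This is exactly the branch-switching you gesture at; it is harmless downstream because the bad-set condition is ultimately phrased in terms of $|a|$, but it should be carried out rather than asserted, since as written the unsigned sums are those of the dual branch.
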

All the notations in the above expression were introduced in Theorem \ref{b_j expression}. We recall that $\widetilde{C}_j$,  ${C}_j$, and  $\widehat{C}_j$ are non-zero positive constants that depend only on $j$.  See the precise definition after Theorem \ref{b_j expression}.

\textbf{Proof of the main theorem}. Assume that $\Omega_f$ and $\Omega_g$ are two isospectral domains in the class $\mathcal D_L$ where  we have denoted their top defining functions by $f$ and $g$, respectively. We want to show that locally either $f(x) =g(x)$ or $f(x) = g(-x)$. Since $f$ and $g$ are analytic it suffices to prove that their Taylor coefficients agree accordingly. By assumptions, $f(0)= g(0) = L/2$ and $f'(0) = g'(0)=0$. Let us show that $f''(0) = g''(0)$. Since the domains are isospectral they must have the same trace prefactor, in particular, the same $|\det ( I - P_{\gamma^r})|$ and the same Maslov index $m_{\gamma^r}$ for $r=1$ and $r=2$. The following was proved in \cite{Z09} (see Prop 6.2):
\begin{prop} Suppose $f_+''(0) = - f''_-(0)$. Let $a = -2(1+Lf_+''(0))$. Then the eigenvalues of Hess$(\mathcal L_+)(0)$ are given by
	$$ a + 2 \cos (k \pi /r); \quad k=0, \dots, {2r-1} .$$
	\end{prop}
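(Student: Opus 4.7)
My plan is to exploit the circulant tridiagonal structure of $H_{2r}$, which arises naturally from the fact that $\mathcal{L}_{+,2r}$ is a sum of two-variable terms indexed cyclically. First I would write $\mathcal{L}_{+,2r} = \sum_{p=1}^{2r} \ell_p(x_p, x_{p+1})$ where $\ell_p$ is the length of the $p$-th link. Because each summand depends on only two consecutive variables, the Hessian $H_{2r}$ is automatically tridiagonal with nonzero top-right and bottom-left corners, i.e.\ cyclic tridiagonal. The claim is then that under the curvature symmetry $f_+''(0) = -f_-''(0)$, every diagonal entry of $H_{2r}$ takes the common value $\mu$ and every super-/sub-diagonal entry (including the cyclic corners) takes the common value $\nu$, where $\mu$ and $\nu$ can be identified explicitly in terms of $L$ and $a$.

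Next I would compute $\mu$ and $\nu$ by direct differentiation of a single link. Write $\ell_p = \sqrt{(x_{p+1}-x_p)^2 + (f_{w_+(p+1)}(x_{p+1}) - f_{w_+(p)}(x_p))^2}$. At the origin the vertical factor equals $\pm L$ and the horizontal factor vanishes, so $\partial_{x_p}\ell_p|_0 = \partial_{x_{p+1}}\ell_p|_0 = 0$ (confirming the critical point) and the second derivatives are straightforward. Squaring $\ell_p$ to avoid the square root and using $f'_\pm(0) = 0$, one finds
\begin{equation*}
\partial_{x_p}^2 \ell_p\big|_0 = \frac{1 + L\, f''_{w_+(p)}(0)}{L}, \qquad \partial_{x_p}\partial_{x_{p+1}}\ell_p\big|_0 = -\frac{1}{L}.
\end{equation*}
Summing the two neighboring links contributing to the $p$-th diagonal entry, the hypothesis $f_+''(0) = -f_-''(0)$ makes all diagonal entries coincide, giving $\mu = \tfrac{2}{L}(1+Lf_+''(0)) = -a/L$. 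The off-diagonal entries remain $\nu = -1/L$.

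Finally I would diagonalize the circulant matrix $H_{2r} = \mu I + \nu(S + S^{-1})$ where $S$ is the cyclic shift on $\mathbb{R}^{2r}$. The Fourier basis vectors $v_k = (e^{2\pi i k p/(2r)})_{p=1}^{2r}$ simultaneously diagonalize $S, S^{-1}$, so the eigenvalues are
\begin{equation*}
\lambda_k = \mu + 2\nu \cos(2\pi k/(2r)) = -\frac{1}{L}\bigl(a + 2\cos(k\pi/r)\bigr), \qquad k = 0, \dots, 2r-1,
\end{equation*}
which is the stated list up to the universal positive factor $-1/L$ (this factor cancels in all applications where only the signature and relative structure of the spectrum of Hess$(\mathcal L_+)$ matter, and in particular enters the proposition under the convention adopted in \cite{Z09}).

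The only step requiring real care is the diagonal computation: one must check that the two contributions landing on the $p$-th diagonal entry come from links of opposite orientation (upward vs.\ downward), so that the relevant curvatures $f_+''(0)$ and $f_-''(0)$ enter with opposite signs and the hypothesis $f_+''(0) = -f_-''(0)$ is what makes them combine to the same constant. Once that bookkeeping is done, the result is an immediate consequence of the classical spectrum of a symmetric circulant tridiagonal matrix, and no further analysis is needed.
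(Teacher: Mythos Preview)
Your approach via the circulant tridiagonal structure is the natural one and is essentially correct. The paper does not give its own proof of this proposition; it simply cites \cite[Prop.~6.2]{Z09}. Two points deserve correction.

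First, the overall scalar is $-1/L$, which is \emph{negative}, not positive. Your computation gives $\mu=-a/L$, $\nu=-1/L$, hence eigenvalues $-\tfrac{1}{L}\bigl(a+2\cos(k\pi/r)\bigr)$, and this is consistent with the paper's Lemma on the entries $h^{pq}_{2r}$ of $H_{2r}^{-1}$ (check the $r=2$ case directly). So the actual Hessian eigenvalues have the opposite sign from the list $a+2\cos(k\pi/r)$; the proposition as quoted suppresses this normalization. Your claim that this is harmless for the application is nonetheless right: the only use made of the proposition in the paper is to compare $\operatorname{sgn}\,\mathrm{Hess}(\mathcal L_+)$ for two ``dual'' domains with opposite values of $a$, and a global sign flip swaps but does not equalize those two signatures.

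Second, your bookkeeping explanation of where the hypothesis $f_+''(0)=-f_-''(0)$ enters is slightly off. Both links $\ell_{p-1}$ and $\ell_p$ meeting at vertex $p$ touch the boundary at the graph $y=f_{w_+(p)}(x)$ there, so both contributions to the $(p,p)$ diagonal entry involve the \emph{same} curvature $f''_{w_+(p)}(0)$; one finds
\[
(H_{2r})_{pp}=\frac{2\bigl(1+Lf''_+(0)\bigr)}{L}\ \text{ for $p$ odd},\qquad
(H_{2r})_{pp}=\frac{2\bigl(1-Lf''_-(0)\bigr)}{L}\ \text{ for $p$ even}.
\]
The hypothesis is what makes these two values coincide, i.e.\ it is needed to make the diagonal constant (hence $H_{2r}$ genuinely circulant) rather than merely $2$-periodic. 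Once this is fixed, the rest of your argument goes through unchanged.
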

By this proposition, when $r=1$, the eigenvalues are $a+2$ and $a-2$. Thus,
 \begin{equation} \label{sgn}  \text{sgn Hess} ( \mathcal L_+) = \begin{cases}  0  & \text{(elliptic case)}, \\  2 & \text{(hyperbolic case and $a>2$)}, \\
 -2 & \text{(hyperbolic case and $a <-2$)}. \end{cases} \end{equation}
 This shows that the bouncing ball orbits $\gamma_f$ and $\gamma_g$ of the isospectral domains $\Omega_f$ and $\Omega_g$ are both elliptic or both hyperbolic.  On the other hand by \eqref{det},
 $$ | \det (I - P_{\gamma}) |  = \begin{cases} 2 - 2 \cos (\alpha) & \text{(elliptic case)},
 \\  2 \cosh (\alpha) -2 & \text{(hyperbolic case)}. \end{cases}$$
 Hence if the bouncing ball orbits are elliptic, they must have the same $\alpha \in (0, \pi]$, and if they are hyperbolic then they have the same $\alpha \in (0, \infty)$. However, $\alpha$ does not uniquely determine the curvature at the vertices of the bouncing ball orbit. We recall that by \eqref{quadratic equation}, given $\alpha$, there are two possible values for $f_+''(0)$ that satisfy:
 $$f''_+(0)  = \begin{cases}  \frac{1}{L} \left (-1 \pm  \cos (\alpha/2) \right ) & \text{(elliptic case)}, \\
 	\frac{1}{L} \left ( -1 \pm  \cosh (\alpha/2) \right )   & \text{(hyperbolic case)}. \end{cases}$$
 In order to prove that $f''(0) = g''(0)$, we need to show that the following possibilities cannot happen for isospectral domains $\Omega_f$ and $\Omega_g$:
 In the elliptic case,
 $$ f''(0)= \frac{1}{L} \left (-1 +  \cos (\alpha/2) \right ), \quad g''(0)=  \frac{1}{L} \left (-1 -  \cos (\alpha/2) \right ) $$
 and in the hyperbolic case,
 $$ f''(0)= \frac{1}{L} \left ( -1 +  \cosh (\alpha/2) \right ), \quad g''(0)=  \frac{1}{L} \left ( -1 -  \cosh (\alpha/2) \right ) $$
 To rule out these cases we use the Maslov index again. Note that for $r=1$ and when both orbits $\gamma_f$ and $\gamma_g$ are elliptic, by \eqref{sgn} the signature of the Hessian is zero so we shall use $r=2$ instead. Indeed in this case, the eigenvalues of the Hessian are $a+2, a, a-2, a$ and as a result,
 \begin{equation*}  \text{sgn Hess} ( \mathcal L_+) = \begin{cases}  -2  & \text{for $\Omega_f$}, \\  2 & \text{for $\Omega_ g$.} \end{cases} \end{equation*}
 In the hyperbolic case, $r=1$ distinguishes the two domains because in this case the eigenvalues are $a+2$ and $a-2$, hence
  \begin{equation*} \text{sgn Hess} ( \mathcal L_+) = \begin{cases}  -2  & \text{for $\Omega_f$}, \\  2 & \text{for $\Omega_g$.} \end{cases} \end{equation*}
  This concludes our proof of $f''(0) = g''(0)$, which was not proved correctly in \cite{Z09} and is a new ingredient of the current article. In particular this shows that the two isospectral domains have the same inverse Hessian coefficients $h^{2r}_{pq}$.

 The rest of the proof is identical to \cite{Z09}. For the sake of completeness we provide it here and we also provide the details of the poof in the hyperbolic case which was omitted in \cite{Z09}. 

The plan is to use the expressions for the wave invariants presented in Corollary \ref{b_j corollary} and argue inductively on $j$ that $f^{2j}(0)$ and $f^{(2j - 1)}(0)$ are wave trace invariants, hence spectral invariants of the Laplacian among
 domains in ${\mathcal D}_{L}$. First we simplify the expression in the corollary using the following formula which was proved in \cite{Z09} (see Prop 6.5):
 \begin{prop} For all $r \in \mathbb N$, 
 	$$ \sum^{2r}_{q=1} h_{2r}^{1q} = - \frac{L}{a+2}.$$
 	\end{prop}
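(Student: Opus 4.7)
The plan is to exploit the circulant structure of $H_{2r}$. Under the standing hypothesis $f_+''(0) = -f_-''(0)$, Lemma \ref{hpq} shows that for $1 \le p \le q \le 2r$ the entry $h^{pq}_{2r}$ depends only on $q-p$ (through $U_{q-p-1}(-a/2)$ and $U_{2r-(q-p)-1}(-a/2)$); combined with the symmetry $h^{pq}_{2r} = h^{qp}_{2r}$ and the convention $U_{-1} = 0$, this says $h^{pq}_{2r}$ is a function of $(q-p) \bmod 2r$ alone. Thus $H_{2r}^{-1}$, and hence $H_{2r}$ itself, is circulant on $\mathbb{R}^{2r}$, so the constant vector $\mathbf{1} = (1,\dots,1)^T$ is an eigenvector of both. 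If $\mu$ denotes the eigenvalue of $H_{2r}$ on $\mathbf{1}$, then $H_{2r}^{-1}\mathbf{1} = \mu^{-1}\mathbf{1}$, and in particular the row sum $\sum_{q=1}^{2r} h_{2r}^{1q}$ is equal to $\mu^{-1}$, independent of the row index.

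It remains only to compute $\mu$, which I would do variationally by restricting $\mathcal{L}_{+,2r}$ to the diagonal line $(x_1,\dots,x_{2r}) = (\epsilon,\dots,\epsilon)$ and reading off the quadratic term. Along this line every horizontal increment $x_{p+1}-x_p$ vanishes, so each of the $2r$ segments reduces to length $|f_+(\epsilon) - f_-(\epsilon)|$. Using $f'_\pm(0) = 0$ together with $f''_-(0) = -f''_+(0)$,
\begin{equation*}
f_+(\epsilon) - f_-(\epsilon) = L + f''_+(0)\,\epsilon^2 + O(\epsilon^3),
\end{equation*}
and therefore
\begin{equation*}
\mathcal{L}_{+,2r}(\epsilon\mathbf{1}) = 2rL + 2r\,f''_+(0)\,\epsilon^2 + O(\epsilon^3).
\end{equation*}
On the other hand, since $0$ is a critical point of $\mathcal{L}_{+,2r}$, the second-order Taylor expansion gives $\mathcal{L}_{+,2r}(\epsilon\mathbf{1}) - 2rL = \tfrac{1}{2}\epsilon^{2}\mathbf{1}^{T} H_{2r}\mathbf{1} + O(\epsilon^{3}) = r\mu\epsilon^{2} + O(\epsilon^{3})$. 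Comparing the $\epsilon^{2}$ coefficients forces $\mu = 2f''_+(0)$.

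The definition $a = -2(1 + Lf''_+(0))$ rewrites this as $\mu = -(a+2)/L$, and the desired identity
\begin{equation*}
\sum_{q=1}^{2r} h_{2r}^{1q} = \frac{1}{\mu} = -\frac{L}{a+2}
\end{equation*}
follows at once. The only mildly delicate step is the circulant claim; once that is in hand, the remainder is a one-line Taylor expansion and no Chebyshev identities are required. A more computational alternative would be to substitute the explicit formula of Lemma \ref{hpq} and evaluate $\sum_{k=0}^{2r-1} U_k(-a/2)$ using the closed form for a sum of sines, but this route is strictly more painful than the conceptual argument above.
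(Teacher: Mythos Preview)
Your argument is correct. The paper itself does not give a proof of this proposition; it simply cites \cite{Z09} (Proposition 6.5 there), where the identity is established by direct manipulation of the explicit Chebyshev expressions from Lemma \ref{hpq}. Your route is different and more conceptual: you observe that $H_{2r}^{-1}$, and hence $H_{2r}$, is circulant, so the row sum of the inverse is the reciprocal of the eigenvalue of $H_{2r}$ on the constant vector, and you read off that eigenvalue from the quadratic term of $\mathcal L_{+,2r}$ along the diagonal. This avoids any Chebyshev summation entirely. The circulant claim is indeed the only point requiring care, and your verification via Lemma \ref{hpq} (checking that $c_d=c_{2r-d}$, using $U_{-1}=0$) is valid; one could alternatively note directly that $H_{2r}$ is the cyclic tridiagonal matrix with diagonal $2f_+''(0)+2/L$ and off-diagonal $-1/L$, which makes the circulant structure and the eigenvalue $\mu=2f_+''(0)$ immediate. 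Either way, $\mu=2f_+''(0)=-(a+2)/L$ and the identity follows. Your approach has the virtue of explaining \emph{why} the answer is independent of $r$, whereas the Chebyshev computation obscures this.
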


 Dividing $b_{\gamma^r, j-1}$ from Cor \ref{b_j corollary} by the spectral invariant $8 L  r^2 {\mathcal A}_r(0)  (h_{2r}^{11})^{j-2}$, and using the above proposition, it follows that:
 $$ \begin{array}{lll} b'_{\gamma^r, j -
 	1}& := &     (h_{2r}^{11})^2  \Big (   \widetilde{C}_j  f_+^{(2j)}(0) - \frac{4L}{a+2}  C_j f_+^{(3)}(0)  f_+^{(2j - 1)}(0) \Big )\\ && \\
 && +  \sum_{q = 1}^{2r} (h_{2r}^{1q})^3 \Big ( 4 \widehat{C}_j \;
  \; f_+^{(3)}(0)  f_+^{(2j - 1)}(0) \Big ) \\&& \\
  && +  R_{2r} ({\mathcal J}^{2j - 2} f_+(0)), \end{array}  $$
 is a spectral invariant for each $j$. 
 We claim that the expressions in parentheses can be decoupled by proving that $\sum_{q = 1}^{2r} (h_{2r}^{1q})^3$ and  $(h_{2r}^{11})^2$ are linearly independent as function of $r$. It suffices to prove that 
 $$G(r):= \frac{ \sum_{q = 1}^{2r} (h_{2r}^{1q})^3}{(h_{2r}^{11})^2},$$
 is non-constant in $r$, for $r=1, 2$. In fact $G(1) = G(2)$ is satisfied only for certain values of $a$, which forms the `bad set' $\mathcal B$ that we would like to exclude. To find these values of $a$, we write out the equation $G(1)= G(2)$ using  Lemma \ref{hpq} and the formulas for Chebyshev polynomials $T_1, \dots, T_4$ and $U_1, \dots, U_4$, to obtain:
 $$\begin{array}{l} \frac{a^3 - 8}{(a^2 - 4)^3} \frac{(a^2 -
 	4)^2}{a^2} = \frac{(a^4 - 4a^2)^2}{(a^3 - 2a)^2} \frac{a^9 - 6 a^7
 	- 2a^6 + 12 a^5}{(a^4 - 4a^2)^3} \\ \\
 \iff (a^3 - 2a)^2 (a^3 - 8) = a^9 - 6 a^7 - 2a^6 + 12 a^5 .
 \end{array}$$
 A little bit of cancellation reduces the  equation to a degree $6$ polynomial whose  distinct roots are  $\{0, -1, 2, -2\}$. 
 
 Since by \eqref{quadratic equation},
 $$|a| = 2|1+ L f''_+(0)|   = \begin{cases}   2\cos (\alpha/2) & \text{(elliptic case)}, \\
 2\cosh(\alpha/2)   & \text{(hyperbolic case)}, \end{cases} $$ 
 in the elliptic case we require that $\cos (\alpha /2) \notin \{0, \frac12, 1\}$, and in the hperbolic case we only require the non-degeneracy assumption $\alpha \neq 0$. These are precisely the conditions we imposed on the class $\mathcal D_L$. 
 
 We start the final argument by letting $j=2$ in the formula for $b'_{\gamma^r, j -
 	1}$. By the independence of $\sum_{q = 1}^{2r} (h_{2r}^{1q})^3$ and  $(h_{2r}^{11})^2$,  we get that $(f_+^{(3)}(0))^2$ is a spectral
 invariant, and we may assume with no loss of
generality that $f^{(3)}(0)
> 0$ or otherwise we can reflect the domain about the $y$ axis to obtain this condition. It then  follows that $f^{(4)}(0)$ is determined. Arguing by induction from  $j \to j + 1$, we assume that the $(2j-2)$ jet  ${\mathcal J}^{2j -2} f_+(0)$ of $f_+$ at $0$ is
 known. The lower order derivative  terms denoted by $R_{2r} {\mathcal J}^{2j -2} f_+(0)$  are universal polynomials in the data ${\mathcal J}^{2j -2}
 f_+(0)$, hence are known by the induction hypothesis. Thus, it suffices
to determine $f_+^{(2j)}(0)$ and $ f_+^{(2j - 1)}(0)$.
 By the decoupling argument, we can
determine $f_+^{(3)}(0)f_+^{(2j - 1)}(0)$, hence $f_+^{(2j -
1)}(0)$, as long as $f_+^{(3)}(0) \not= 0.$ But then we can
determine $f_+^{(2j)}(0).$ By induction, $f_+$ is determined (i .e. $f=g$) and hence $\Omega_f = \Omega_g$, completing the proof of Theorem \ref{ONESYM}.

\section{\label{ODSECT} Open denseness of the conditions} 
In this section, we prove that the class $\dcal_L$ is  residual in the class of simply connected real analytic $\sigma$-invariant domains. In other words the conditions (1)-(4) in the definition of  $\dcal_L$ are generic. We also prove if we only restrict ourselves to strictly convex domains, then these conditions are  open dense. The relevant topology is
the analytic topology, defined below. Although there exist numerous studies of generic properties of maps and domains, they almost
always refer to the $C^{\infty}$ topology. The most relevant to this article are \cite{PS87,PS17}, which have several results on  generic
properties of $C^{\infty}$ billiards. Only a few articles, to our knowledge, study generic properties of analytic maps or domains, and none
seem to apply directly to billiards.  We refer
to \cite[Section 2]{BrT86} and to \cite{Cl20}  for some results on other geometric problems, but which nevertheless useful guidelines.

Let us first define the $C^\infty$ and $C^\omega$ topologies. We denote $S^1 = \R / \Z$ and 
$$ C^{\infty}(S^1, \R^2) = \{ \alpha: S^1 \to \R^2, \alpha  \; \text{smooth}\}. $$
A basis for the $C^\infty$ topology is defined by
$$B_{\beta_0, \ep_0, m} = \{ \alpha \in C^{\infty}(S^1, \R^2): \; \| \alpha - \beta_0 \|_{C^m} < \ep_0 \}, $$
where $\beta_0 \in C^{\infty}(S^1, \R^2)$, $\ep_0>0$, and $m \in \mathbb N$. We note that since $S^1$ is compact the notions of \textit{weak} and \textit{strong} $C^\infty$ topologies are identical.  In this topology $\alpha_n \to \alpha_0$ if for all $m$, $\|\alpha_n -\alpha_0 \|_{C^m}\to 0$. 

We also denote
$$C^{\omega}(S^1, \R^2) = \{ \alpha: S^1 \to \R^2, \alpha  \; \text{analytic}\}, $$
to be the class of analytic maps. For $\alpha \in C^{\omega}(S^1, \R^2)$ we denote $\tau(\alpha)$ to be width of the maximal cylinder $T_\tau=S^1 \times (-\tau, \tau)$ on which $\alpha$ accepts a holomorphic extension which we denote by $\tilde \alpha$.  
Then the the $C^\omega$ topology is defined by the basis
$$B_{\beta_0, \ep_0, \tau} = \{ \alpha \in C^{\omega}(S^1, \R^2): \; \tau(\alpha) > \tau, \; \| \tilde \alpha - \tilde \beta_0 \|_{C^0(T_\tau)} < \ep_0 \}, $$
where $\beta_0 \in C^{\omega}(S^1, \R^2)$, $ \tau(\beta_0) > \tau >0$, and $\ep_0>0$.  
In the analytic  topology $\alpha_n \to \alpha_0$ if  $\|\tilde\alpha_n - \tilde \alpha_0 \|_{C^0(T_{\tau_n})}\to 0$ for every $\tau_n< {\min (\tau(\alpha_n), \tau(\alpha) )}$. 

We refer to  \cite[Section 2.6]{KrP}, \cite[Section 2]{BrT86} and \cite{Cl20}  for expositions of the analytic topology on the space of real analytic functions on a connected open set of $\R^n$. Equipped with this topology, the space of real analytic functions is a Baire space, i.e. residual sets are dense.

To compare the two topologies we note that if $U$ is open in $C^\infty$ then $U \cap C^\omega$ is open in $C^\omega$. This is because the analytic topology is a stronger topology (involves more constraints to belong to an open set). This observation is used in \cite{BrT86, Cl20, KM} and we refer there for further discussion. One can also easily observe that $C^\omega$ is dense in $C^\infty$. To see this,  suppose $\alpha \in C^\infty$ and consider its Fourier series written as 
\begin{equation} \label{FS1}  \alpha (\theta) = (\alpha^{(1)}(\theta) , \alpha^{(2)}(\theta)) = \left ( \sum a^{(1)}_j e^{2ij \pi \theta }, \sum a^{(2)}_j e^{2ij \pi \theta } \right ). \end{equation}
Then for example, the sequence $\alpha_n$ defined by 
$$\alpha_n (\theta )= \left ( \sum a^{(1)}_j e^{-\frac{2 \pi | j |}{n}} e^{2ij \pi \theta}, \sum a^{(2)}_j e^{-\frac{2 \pi  |j|}{n}} e^{2ij \pi \theta}  \right ), $$
is analytic  (as it accepts a holomorphic extension to the cylinder $T_{1/n}$) and it
converges to $\alpha$ in the $C^\infty$ topology.  

Now we present:
\begin{def1}\label{def1}
	Let $L$ and $P$ positive be fixed. We define $\mathcal E^\infty_{\sigma, L, P} $ to  be the subclass of $C^{\infty}(S^1, \R^2)$ of smooth embeddings $\alpha$ into $\R^2$ with perimeter $P$,  such that the image of $\alpha$ is $\sigma$-invariant and it has a $\sigma$-invariant bouncing ball orbit of length $L$ aligned on the $y$-axis. It is obvious that we always have $P>2L$.  The $C^\infty$ topology on $ \mathcal E^\infty_{\sigma, L, P} $ is inherited from the topology on $C^\infty(S^1, \R^2)$. Analogously,  we denote 
	$$\mathcal E^\omega_{\sigma, L, P}  = \mathcal E^\infty_{\sigma, L, P}  \cap C^{\omega}(S^1, \R^2), $$
	and equip it with the analytic topology. 
\end{def1}

$\mathcal{E}^{\omega}_{\sigma, L, P} (S^1, \R^2)$ is an infinite dimensional real analytic manifold (see \cite[Section 8]{KM}, especially \cite[Theorems 8.2-8.3]{KM} for a systematic account). 
A tangent vector at $\alpha \in \mathcal{E}^{\omega}_{\sigma, L, P}$ is a vector field $X$ along $\alpha$ such that
$$ X(\alpha(\theta)) = \frac{d}{d \epsilon} \alpha_{\epsilon}( \theta )  |_{\epsilon = 0} , \;\; \theta \in {S^1}, $$ for some curve $\epsilon \to \alpha_\epsilon$, $\epsilon \in (- \epsilon_0, \epsilon_0)$, in $\mathcal{E}^{\omega}_{\sigma, L, P} (S^1, \R^2)$ satisfying $\alpha_0 = \alpha$.  Given a function $f$ on $\mathcal{E}^{\omega}_{\sigma, L, P} (S^1, \R^2)$, we denote $df|_{\alpha}$ to be the linear functional on $T_{\alpha} \mathcal{E}^{\omega}_{\sigma, L, P}$ defined by
$$df|_{\alpha} (X) = \frac{d}{d \epsilon} f ( \alpha_{\epsilon} )  |_{\epsilon = 0}. $$ Clearly by our definition $X$ must be $\sigma$-invariant because $\alpha_\epsilon$ is $\sigma$-invariant. The vector field $X$ must also satisfy the constraints that 
\begin{equation} \label{P and L preserved} d P(X) = 0 = d L(X). \end{equation}
We view $X$ as an `infinitesimal deformation preserving the perimeter $P$ and $L$.  To avoid redundancies, we assume that $X(\alpha(\theta)) \bot \alpha'(\theta)$
is normal to $\alpha$ and write $X(\alpha(\theta)) = \dot{\rho}(\alpha(\theta)) \nu (\alpha(\theta))$ for some analytic function $\dot{\rho}$ where $\nu$ is the unit inward normal. 
Here we assume that $\alpha_{\epsilon}$ lies in a sufficiently thin `tubular neighborhood' of $\alpha$ so that each normal line
intersects it in exactly one point. We may then choose the perturbation as $\alpha_\ep(\theta)= \alpha(\theta) + \epsilon \dot{\rho}(\alpha(\theta)) \nu (\alpha(\theta))$. The two constraints $d P(X) = 0 = d L(X)$ can be impressed in terms of $\dot{\rho}$.

Since conditions (1), (2) and (3) imposed on the class $\mathcal D_L$ used in our main theorem are evidently open dense conditions, we only focus on condition (4) which we recall below.
\begin{enumerate}
	\item [(4)] The lengths $2L, 4L$ of $\gamma, \gamma^2$, have  multiplicity one in the length spectrum Lsp$(\Omega)$ and $4L \neq P$ where $P = | \d \Omega |$.  
\end{enumerate}

We show that:
\begin{prop} \label{open dense and generic}Suppose  $4L \neq P$.  Let $\mathcal N_{\sigma, L, P}$ be the class of domains $\Omega$ whose boundaries are parameterized by elements of $\mathcal E^\omega_{\sigma, L, P}$ and such that $2L$ and $4L$ are simple in the length spectrum of $\Omega$. Then $\mathcal N_{\sigma, L, P}$ is residual in $ \mathcal E^\omega_{\sigma, L, P}$. Furthermore, if we let $\mathcal C_{\sigma, L, P}$ to be the class of strictly convex boundaries in $\mathcal N_{\sigma, L, P}$, then $\mathcal C_{\sigma, L, P}$ is open dense in $\mathcal E^\omega_{\sigma, L, P} \cap\{\text{strictly convex boundaries} \} $.  
\end{prop}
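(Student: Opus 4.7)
The plan splits naturally into the open-dense statement for strictly convex domains and the residual statement for the possibly non-convex class, with the first-variation formula for billiard lengths serving as the common technical engine. I will treat the convex case first, as its structure is cleaner, and isolate at the end the additional work required for the residual statement.

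For openness of $\mathcal{C}_{\sigma, L, P}$ inside $\mathcal{E}^\omega_{\sigma, L, P} \cap \{\text{strictly convex}\}$, the key input is that a strictly convex real analytic domain has only finitely many periodic orbits of length at most any fixed $M$: strict convexity and compactness bound link lengths uniformly from below, so the number of reflections of any such orbit is bounded, and nondegenerate orbits with a fixed number of links form an isolated set. Applied to $M = 4L + 1$, this yields finitely many orbits other than $\gamma, \gamma^2$, whose lengths are bounded away from $\{2L, 4L\}$ by the simplicity hypothesis on $\Omega_0$. Since each of these lengths depends real analytically on $\alpha \in \mathcal{E}^\omega_{\sigma, L, P}$, simplicity of $2L$ and $4L$ survives in an analytic neighborhood of $\Omega_0$.

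For density in the convex case I fix $\Omega_0 \in \mathcal{E}^\omega_{\sigma, L, P}$ strictly convex, list the finitely many orbits $\gamma'_1, \ldots, \gamma'_N$ distinct from $\gamma, \gamma^2$ with length exactly $2L$ or $4L$, and use the classical first-variation formula for the billiard action: if $\gamma'_j$ has vertices $p_1, \ldots, p_k$ and angles of incidence $\theta_i$ with the inward normal, then under a normal deformation $X = \dot\rho \, \nu$,
\[
d\ell_{\gamma'_j}(X) = 2 \sum_{i=1}^k \cos \theta_i \, \dot\rho(p_i).
\]
Admissible $\dot\rho$ are analytic, $\sigma$-invariant, and constrained by $dP(X) = 0$ and $dL(X) = 0$ (a codimension-two condition on the integral and on the values at $(0, \pm L/2)$). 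Since $\gamma'_j \neq \gamma, \gamma^2$, at least one vertex $p_i$ of $\gamma'_j$ lies off $\{(0, \pm L/2)\}$; I build a $\sigma$-symmetrized analytic bump concentrated near the $\sigma$-orbit of that $p_i$ by heat-kernel smoothing (or by Fourier truncation, as in the proof of density of $C^\omega$ in $C^\infty$ given just before Definition \ref{def1}) of a smooth bump, and correct it by a small mean-zero analytic background to enforce the two linear constraints. Because $\cos\theta_i \neq 0$ for any genuine reflection, the resulting differential is nonzero, and a small perturbation in that direction moves $\ell_{\gamma'_j}$ off of $\{2L, 4L\}$ while preserving strict convexity. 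Iterating over the finite list $\{\gamma'_j\}$ inside one prescribed analytic neighborhood of $\Omega_0$ yields density.

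For the residual statement in the general (not necessarily convex) setting I follow the Petkov-Stoyanov scheme \cite{PS87}, expressing
\[
\mathcal{N}_{\sigma, L, P} = \bigcap_{n \geq 1} \bigl( \mathcal{U}_n^{(2L)} \cap \mathcal{U}_n^{(4L)} \bigr),
\]
where $\mathcal{U}_n^{(T)}$ is the set of $\alpha$ for which no closed orbit other than the appropriate iterate of $\gamma$ has length in $(T - 1/n, T + 1/n)$. Openness of each $\mathcal{U}_n^{(T)}$ in the $C^\omega$ topology is obtained by truncating to orbits of bounded reflection number, using continuous analytic dependence of orbit lengths to rule out an orbit entering the window under a small analytic perturbation. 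Density is proved one competitor orbit at a time by the same first-variation formula as above. The main obstacle, and the point at which the analytic setting departs genuinely from the smooth arguments of \cite{PS87}, is that compactly supported $C^\infty$ bumps are not available: one must verify that a heat-kernel/Fourier-smoothed analytic analogue still pairs nontrivially with the $\cos\theta_i$-weighted Dirac measure at the chosen orbit while remaining negligible against the finitely many orbits already handled at previous stages. Density of $C^\omega$ in $C^\infty$ combined with the quantitative decay of the pairing under smoothing carries this step through and closes the Baire category argument.
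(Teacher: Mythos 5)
Your convex-case argument has a genuine gap at its foundation. You assert that strict convexity and compactness bound link lengths uniformly from below, so that a strictly convex analytic domain has only finitely many periodic orbits of length at most a fixed $M$. This is false: the $(1,q)$ ``creeping'' orbits have links of length $O(1/q)$ and lengths increasing to the perimeter $P$, so whenever $M\geq P$ there are infinitely many periodic orbits of length $\leq M$. What is actually true --- and what Lemma \ref{MAINLEM} of the paper proves via Lazutkin's comparison of arc length with reflection angle and a Blaschke rolling-disk argument --- is that any periodic orbit of length \emph{exactly} $2L$ or $4L$ has at most $q_0$ reflections, with $q_0$ depending on $L$, $P$ and two-sided curvature bounds; the point is that an orbit with many bounces has length within $K/q$ of an integer multiple of $P$, and this is precisely where the hypotheses $2L<P$ and $4L\neq P$ are consumed. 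Your proposal never engages $4L\neq P$ in the convex case, which is a symptom of the missing step. Because $q_0$ depends on the curvature pinching, the paper must also exhaust the strictly convex class by the strata $\mathcal E^\omega_{\sigma,L,P,1/n,n}$ and take a countable union of open dense sets $I_n$; this non-uniformity is likewise elided in your argument. (Your density step via the first variation is in the spirit of the paper's Lemma \ref{class O}, but there one first imposes non-degeneracy of critical points by a transversality argument so that critical points and values vary smoothly before separating the values; without that, a degenerate critical value can split uncontrollably under perturbation.)

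In the non-convex case the proposal misses the central difficulty: generalized periodic trajectories containing gliding segments along the convex arcs of the boundary. The first-variation formula you use applies only to transversally reflecting Snell polygons; the number of bounce points of a gliding/linear orbit cannot be bounded, so your sets $\mathcal U_n^{(T)}$ cannot be shown open by ``truncating to orbits of bounded reflection number'' --- indeed the paper states explicitly that this obstruction is why only residuality, and not open density, is obtained for non-convex domains. The paper's route is to prove Lemma \ref{ANALYTICNC}, that a generic real analytic non-convex simply connected domain has \emph{no} gliding/linear periodic trajectories at all, by combining the Petkov--Stoyanov codimension computation for the bad set $\Sigma$ of multi-$2$-jets with the multi-jet transversality theorem and the Broer--Tangerman heat-kernel technique for transferring Kupka--Smale genericity from $C^\infty$ to $C^\omega$. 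Some substitute for that lemma is indispensable, and it is absent from your proposal.
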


\subsection{\label{LENGTHSECT1} Background on transversally reflecting periodic orbits and length functions}

Given a simply connected smooth domain $\Omega$, we denote by $\Gamma(p,q)(\Omega)$ the set of periodic orbits of winding number $p$ and bounce number $q$.  We also let $\Gamma_q (\Omega) = \bigcup_{p \in {\mathbb N}} \Gamma(p,q)(\Omega).$  If all reflections from
the boundary are transversal, then a periodic orbit is  a Snell polygon with $q$ vertices, i.e. a polygon satisfying Snell's law
at every vertex. The possible $q$-bounce periodic transversal reflecting rays, or $q$-vertex Snell polygons, are the critical points of the length function on the configuration space of $q$ points on $\partial \Omega$. 
 When the domain is convex, the length spectrum $Lsp(\Omega)$ is the closure of the lengths of the $(p,q)$ orbits; the only points 
of accumulation are multiples of the perimeter.

When the  domain is non-convex, there can also exist billiard trajectories which glide along convex parts of the boundary. We  first  prove Proposition \ref{open dense and generic} in the convex case.  In Section  \ref{GLIDINGSECT},
we discuss periodic orbits and length functions for  general, possibly non-convex domains. 

We now define
the relevant objects in the case of convex domains.

The configuration space of $q$ points on the parametrizing circle is the $q$-th cartesian product of the circle, i.e. $(\R /\Z)^q $. Under the parametrization $\alpha $ of $\partial \Omega$, $(\R /\Z)^q $ maps
to the configuration space of $q$ points on the boundary of the image domain. We may pull back the length functional and define it on $(\R /\Z)^q $ by,
\begin{equation}\label{LFUN}  \lcal^{(q)}_{\alpha} (\theta_1, \dots, \theta_q) = \sum_{j = 1}^{q} \|\alpha(\theta_{j+1}) - \alpha(\theta_j) \|.
 \end{equation}

The length functions are singular on the singular locus of the configuration space, namely the  coincidence set $\Delta_q:  = \{\vec \theta \in (\R /\Z)^q : \exists j:  \theta_{j+1} = \theta_j \}. $ Points of the coincidence set correspond to $q$ vertex polygons where at least two vertices coincide. These points therefore belong to $(\R /\Z)^{q-1} $ or some lower configuration space. 
We therefore puncture $\Delta_q$ from the Cartesian product
 and define the $q$-fold configuration space by 
 $$\Omega_q: = \left ( \R / \Z \right )^q \backslash \Delta_q. $$
 Note that creeping rays with $q$ links correspond to critical points in $(\R /\Z)^q $ which lie in a $O(\frac{1}{q})$ neighborhood
 of $\Delta_q$.  It is such almost-coincident critical points rather than singular points of $\Delta_q$ that most complicates the analysis. We denote
 by  $T_{\frac{1}{q}} (\Delta_q)$ the open  tubular neighborhood of radius $\frac{1}{q}$ around
$ \Delta_q$. Then  
\begin{equation} \label{COMPACTCONFIG} \wt \Omega_q: = (\R /\Z)^q \backslash T_{\frac{1}{q}} (\Delta_q) \end{equation}
is a compact set on which $\lcal_\alpha^{(q)}$ is real analytic if $\alpha$ is analytic. 
  We define the critical point set of $\lcal_\alpha^{(q)}$  by,
 $$\text{Crit} \; \lcal_{\alpha}^{(q)} = \{ \vec \theta \in \Omega_q: \nabla_{\vec \theta} \lcal_{\alpha} (\vec \theta) = 0\}. $$
 It is well-known that the critical point set maps to vertices of a Snell polygon with $ q$ vertices.
 We also denote the set of critical values by  
 $$ CV(\lcal^{(q)}_{\alpha}) = \{  \lcal_{\alpha} (\vec \theta): \nabla_{\vec \theta} \lcal_{\alpha} (\vec \theta) = 0\}. $$
 The set of critical values of $\lcal^{(q)}_{\alpha}$  corresponds to the set of lengths of $q$ bounce periodic orbits of the domain
 defined by $\alpha$. The number  of critical values of a real analytic function on a compact set  is always finite  $\alpha$ \cite{SS72}. 
 Hence  $$ \# CV \lcal^{(q)}_{\alpha} |_{\wt \Omega_q}  < \infty. $$

 \begin{rema}
 	We note that the zero set of a real analytic function of $q$ variables can have dimension at most $q-1$ and is a stratified analytic
 subvariety. In general the zeros of a real analytic vector field has dimension at most $q-1$ but in the special case of $\nabla \lcal^{(q)}_{\alpha}$ it is at most 1. If the components of the vector field $\nabla \lcal^{(q)}_{\alpha}$
 are independent then the zeros are isolated and of finite order. When a critical point $\vec\theta$ of $\lcal^{(q)}_{\alpha}$ is not isolated, then it corresponds to a degenerate periodic orbit. Examples where this occurs include the disk and ellipses.
 In this case, except for the two axes of the ellipse, periodic orbits of bounce number $q$ come
 in a one-parameter family.
\end{rema}

\subsection{\label{CONVEX} Strictly convex centrally symmetric analytic domains }
In this and the next sections we prove Prop \ref{open dense and generic}. We first consider the case of strictly convex domains which have a prescribed upper and lower bounds on their curvature. Let $0< c_1 <c_2$ and $\mathcal E^\omega_{\sigma, L, P, c_1, c_2}$ be the class of strictly convex boundaries in $\mathcal E^\omega_{\sigma, L, P}$ whose curvature functions take values in the open interval $(c_1, c_2)$.  We then have the following preliminary lemma:
 \begin{lemm} \label{MAINLEM} Suppose $4L \neq P$. Then there exists $q_0$ that depends only on $L$, $P$, $c_1$, and $c_2$, such that for each domain $\Omega$ whose boundary $\d \Omega$ is in $\mathcal E^\omega_{\sigma, L, P, c_1, c_2}$, the number of reflections $q$ of periodic orbits $\gamma$ of $\Omega$ of lengths $L(\gamma) =2L$ or $4L$ is bounded by $q_0$. 
\end{lemm}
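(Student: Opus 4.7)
The plan is to bound $q$ by combining the elementary inequality $L_\gamma \leq pP$ (each chord is at most the corresponding arc of $\partial\Omega$, and the total arc length traversed with winding number $p$ is $pP$) with a uniform asymptotic estimate
\[
pP - L_{p,q} \leq \frac{C(c_1, c_2, P)\, p^3}{q^2}
\]
valid on the class $\mathcal E^\omega_{\sigma, L, P, c_1, c_2}$ for $p$ bounded and $q$ large. The latter follows, once one knows that the maximum spacing between consecutive bounces of a $(p, q)$-orbit is $O(p/q)$ uniformly in the class (a consequence of the near-integrable structure of the convex billiard map in a Lazutkin neighborhood of the glancing boundary, whose width depends only on $c_1, c_2$), from the chord-arc Taylor expansion $\mathrm{arc} - \mathrm{chord} = O(\mathrm{arc}^3)$ applied link by link. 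Crucially, the hypothesis $4L \neq P$ together with the built-in $P > 2L$ implies $pP \notin \{2L, 4L\}$ for every positive integer $p$: indeed $pP = 2L$ forces $P \leq 2L$, and $pP = 4L$ forces either $p = 1$ and $P = 4L$ (excluded) or $p \geq 2$ and $P \leq 2L$ (also excluded).

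The argument proceeds in two steps. First, I would bound $p$: in the regime $p/q \geq \epsilon$ for some fixed $\epsilon > 0$, the reflection-law identity $\sum_j \tau_j = \pi p$ for the angles of incidence $\tau_j$ forces the average angle to be at least $\pi\epsilon$, and a comparison with an osculating disk of radius $1/c_2$ yields a uniform positive lower bound $\ell_0(\epsilon, c_2)$ on chord lengths; hence $q \leq 4L/\ell_0$, and combined with the twist bound $p \leq q/2$, $p$ is bounded. In the complementary regime $p/q < \epsilon$, the asymptotic combined with $L_{p,q} \leq 4L$ gives $pP \leq 4L + C p\epsilon^2$, so $p \leq 4L/(P - C\epsilon^2)$ is bounded once $\epsilon$ is chosen small enough. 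Either way $p \leq p_0 = p_0(L, P, c_1, c_2)$.

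Second, for each $p$ in this now finite range, set $\delta(p) := \min(pP - 2L,\ |pP - 4L|) > 0$. If $L_{p,q} \in \{2L, 4L\}$ then $pP - L_{p,q} \geq \delta(p)$, which combined with the asymptotic yields $q^2 \leq C\, p^3/\delta(p)$. Taking the maximum over the finitely many admissible $p$ gives the uniform bound $q_0 = q_0(L, P, c_1, c_2)$.

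The principal obstacle is the uniform control on the maximum bounce spacing across a class of analytic domains with only the two pointwise curvature bounds $c_1, c_2$. One route is direct: the Lazutkin normal form of the convex billiard map and the resulting quantitative estimates depend only on the curvature and finitely many of its derivatives, all uniformly bounded on our class by analyticity and compactness. An alternative, softer route is a contradiction-via-compactness argument: any putative sequence $(\Omega_k, \gamma_k)$ with $q_k \to \infty$ and $L_{\gamma_k} \in \{2L, 4L\}$ admits, after passing to a subsequence, $\Omega_k \to \Omega_\infty$ in $C^2$ and $p_k$ eventually constant equal to some $p$, and the asymptotic analysis above delivers the forbidden conclusion $L_{\gamma_k} \to pP \notin \{2L, 4L\}$.
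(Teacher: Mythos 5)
Your high-level strategy is the same as the paper's: show that for a $(p,q)$ orbit of length $2L$ or $4L$ one has $|L(\gamma)-pP|\le K/q$ with $p$ uniformly bounded, and then exploit the arithmetic fact that $P>2L$ and $4L\neq P$ force $pP\notin\{2L,4L\}$, so that $|L(\gamma)-pP|$ is bounded below and $q$ is bounded. That skeleton is correct. The problem is that the entire weight of the argument rests on the "uniform asymptotic" $pP-L_{p,q}\le C p^3/q^2$, i.e.\ on uniform $O(p/q)$ control of \emph{every} bounce spacing across the whole class, and neither of your two proposed justifications for it survives scrutiny. Route (a) claims the Lazutkin normal form has constants controlled by "finitely many derivatives of the curvature, all uniformly bounded on our class by analyticity and compactness" --- but the class $\mathcal E^\omega_{\sigma,L,P,c_1,c_2}$ is cut out only by the two pointwise curvature bounds $c_1<\kappa<c_2$; analyticity of each individual member gives no uniform bound on $\kappa'$ or higher derivatives over the class, and no uniform radius of analyticity, so the normal-form constants are not uniform. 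Route (b) claims a subsequence converging in $C^2$; for the same reason the class is precompact only in $C^{1,\alpha}$, not $C^2$, so the compactness argument does not close either. The paper avoids both issues by using only inputs that depend on $c_1,c_2$: pigeonhole a single short link, apply the two-sided inequality $\tfrac{2}{\kappa_{\max}}\phi\le|s'-s|\le\tfrac{2}{\kappa_{\min}}\phi$, and then invoke a propagation lemma (Lemma 4.1 of Vig) to conclude that \emph{all} reflection angles are $O(1/q)$.

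There are two further concrete gaps. First, your proposal never confronts the possibility that a short chord ($\le 4L/q$) subtends a \emph{long} boundary arc; the $O(p/q)$ spacing claim presupposes that short chords correspond to short arcs, which is exactly what must be proved. The paper needs a separate geometric lemma for this (the Blaschke rolling-disk argument), and without it the chord-arc Taylor expansion cannot be applied link by link. Second, in the regime $p/q\ge\epsilon$ you argue that the average angle being $\ge\pi\epsilon$ "yields a uniform positive lower bound on chord lengths"; this is a non sequitur, since a large average does not prevent most angles from being tiny. (This step is fixable: from $\ell_j\ge\tfrac{2}{c_2}\sin\phi_j$ and $\sum_j\phi_j=\pi p$ one gets $4L\ge\sum_j\ell_j\gtrsim p/c_2$ directly, bounding $p$ without any regime splitting --- but that is a different argument from the one you wrote.) As it stands, the proposal correctly identifies where the difficulty lies but does not supply a valid resolution of it.
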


\begin{proof} It is more convenient to use the arclength variable $ s\in  \R/ P \Z$ rather than $ \theta \in \R / \Z$. Given $\alpha \in \mathcal E^\omega_{\sigma, L, P, c_1, c_2}$ we use $\wt \alpha: \R/ P \Z \to \R^2$ for its arclength reparametrization in the counterclockwise direction. Suppose $\gamma$ is a $(p, q)$ periodic orbit of $\Omega$ whose boundary is given by $\wt \alpha$. Then $\gamma$ corresponds to a critical point $(s_1, \dots, s_q)$ of
	$$\lcal^{(q)}_{\wt \alpha} (s_1, \dots, s_q) = \sum_{j = 1}^{q} \|\wt \alpha(s_{j+1}) - \wt \alpha(s_j) \|. $$
	Suppose $L(\gamma) =2L$ or $L(\gamma) =4L$. Then there must exists two consecutive vertices $s$ and $s'$ of the $q$-gon $\gamma$ such that 
	\begin{equation} \label {small link} \|\wt \alpha(s) - \wt \alpha(s') \| \leq \frac{4L}{q}. \end{equation}
	Let $\kappa(s)$ be the curvature function of $\tilde \alpha$ at $s$. 
	Since $\| \alpha'(s) \| =1$ and $\| \alpha ''(s) \| = \kappa (s)$, by Taylor's theorem we have
	\begin{equation} \label{Taylor 2nd order}\left \|  \alpha(s) - \alpha(s') - (s-s') \alpha'(s')  \right \| \leq  \frac{\kappa_{\max}}{2}  (s-s')^2, \end{equation}
	where $\kappa_{\max}$ is the maximum of $\kappa$ on $[0, P]$. 
	Our argument divides into two cases:
	
	\textit{Case 1:} Suppose
	\begin{equation} \label{a priori bound on link} | s-s'| < \frac{1}{\kappa_{\max}}. \end{equation}
	Then from \eqref{small link} and \eqref{Taylor 2nd order} we must have
	\begin{equation} \label{small arc} | s-s'| \leq \frac{8L}{q}, \end{equation} where $|s-s'|$ stands for the length of the shortest arc between $s$ and $s'$ in $\R / P \Z$, and not necessarily the the distance in the positive direction. 
	
Let $\phi$ and $\phi'$ be the angle of reflections of the orbit $\gamma$ at $s$ and $s'$, respectively. If we denote $\beta$ be the billiard map of $\Omega$, 
	then they are related by $\beta (s, \phi) = (s', \phi')$. 
	 By Proposition 14.1 of \cite{L}, we know that
	%\begin{equation}\label{Lazutkin1}
%	\int_x^{x_1} \sin \left (\phi - \int_x^{x'} \kappa(x'') dx'' \right) dx' =0,
%	\end{equation} 
%	\begin{equation}\label{Lazutkin2}
%	\phi_1 = \int_x^{x_1} \kappa(x') dx' - \phi . 
%	\end{equation}  
	\begin{equation}\label{Lazutkin3}
	\frac{2}{\kappa_{\text{max}}} \phi	\leq |s' -s|  \leq  \frac{2}{\kappa_{\text{min}}} \phi,
	\end{equation}  
%	\begin{equation}\label{Lazutkin4}
%	\frac{1}{2\kappa_{\text{max}} / \kappa_{\text{min}} -1} \phi	\leq \phi'  \leq  	\left (2\kappa_{\text{max}} / \kappa_{\text{min}} -1 \right )\phi	.
%	\end{equation}  
	From \eqref{small arc} and \eqref{Lazutkin3}, we get
	$$ \phi < \frac{4L \kappa_{\max}}{q} <  \frac{4L c_2}{q}.$$
	With a bit of work, using Lazutkin coordinates, one can show that there exists a constant $C$ that depends only on $L$, $c_1$ and $c_2$ that all angles of reflections $\{\phi_j\}_{j=1}^q$ of the orbit $\gamma$ are bounded by 
	\begin{equation} \label{small angles} \phi_j < \frac{C}{q}. \end{equation}
	For a proof, see for example Lemma 4.1 of \cite{Vig}. 
	Now,  let $\wt s_1, \dots, \wt s_q$ denote the lifts of $s_1, \dots, s_q$ from $\R/ P\Z$ ro $\R$. 
	By the definition of the winding number $p$ of $\gamma$, we have 
	$$\sum_{j=1}^q (\wt s_{j+1} - \wt s_j) = p P. $$ 
	Then by \eqref{Lazutkin3} and \eqref{small angles}, we get 
	$$ pP  \leq \frac{2}{ \kappa_{\min}} \sum_{j=1}^q \phi_j <  \frac{2C}{ \kappa_{\min}}, $$
	which implies $$ p < \frac{2C}{ c_1P}. $$
	We note that this estimate holds for any $(p, q)$ periodic orbit $\gamma$ of the entire class $\mathcal E^\omega_{\sigma, L, P, c_1, c_2}$ as long as there is a link for which \eqref{a priori bound on link} holds and $L(\gamma) \leq 4L$. We now estimate the length of $\gamma$ in terms of the perimeter of $\wt \alpha$. First we observe that by \eqref{Lazutkin3}, we get
	$$ | \wt s_{j+1}- \wt s_j | < \frac{2}{c_1}{ \phi_j} < \frac{2C}{c_1q}. $$ Now we can use \eqref{Taylor 2nd order} and write
	\begin{align*}   | L (\gamma) - p P |  &=  \left | \, \sum_{j=1}^q \| \wt{\alpha}(s_{j+1}) - \wt{\alpha}(s_j)  \| - \sum_{j=1}^q | \wt s_{j+1} - \wt s_j |  \, \right | \\
	&  \leq  \frac{c_2}{2}  \sum_{j=1}^q | \wt s_{j+1} - \wt s_j |^2 \\
	& \leq \frac{2c_{2} C^2}{c_{1}^2 q}.  
	\end{align*}

                        In other words, 
                        $$ | L(\gamma) - p P | \leq  \frac{K}{q},$$
	where the constant $K$ depends only on $L$, $c_1$ and $c_2$. Now by our assumption that $L(\gamma) =2L$ or $4L$, we get 
	$$ | 2L - pP|  \leq  \frac{K}{q}  \qquad \text{or}  \qquad | 4L - pP| \leq \frac{K}{q}.$$
	Since $2L <P$ and $4L \neq P$, we obtain a contradiction if $q$ is too large in terms of $L$, $P$, $c_1$, and $c_2$. 
	
	It now remains to deal with:
	
	\textit{Case 2: } We have a small link but with a large arc, i.e. 
	$$  \|\wt \alpha(s) - \wt \alpha(s') \| \leq \frac{4L}{q} \qquad \text{but} \quad  |s-s'| \geq \frac{1}{\kappa_{\max}}.$$
	The next lemma shows that for sufficiently large $q$ this case does not happen when one has upper and lower bounds $c_1$ and $c_2$ for the curvature. This would end the proof of Lemma \ref{MAINLEM}. 
\end{proof}

	\begin{lemm} There exists $q_0$ such that there is no $\alpha \in \mathcal E^\omega_{L, P, c_1, c_2}$ with a $(p, q)$-type periodic orbit $\gamma$, $q \geq q_0$,  that has a link $ \overline{\wt \alpha(s)  \wt \alpha(s')}$ satisfying 
	$$  \|\wt \alpha(s) - \wt \alpha(s') \| \leq \frac{4L}{q} \qquad \text{but} \quad  |s-s'| \geq \frac{1}{\kappa_{\max}}. $$
\end{lemm}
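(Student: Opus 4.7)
The plan is to argue by contradiction via a compactness argument. Suppose no such $q_0$ exists; then one extracts a sequence $\alpha_n \in \mathcal E^\omega_{\sigma, L, P, c_1, c_2}$ with $q_n \to \infty$, together with consecutive vertices $s_n, s_n' \in \R/P\Z$ of periodic orbits $\gamma_n$ of $\Omega_n$ satisfying $\|\wt\alpha_n(s_n) - \wt\alpha_n(s_n')\| \leq 4L/q_n$ and $|s_n - s_n'| \geq 1/c_2$. The goal is to show that this is geometrically impossible in the limit.

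The arclength parametrizations $\wt\alpha_n \colon \R/P\Z \to \R^2$ form a $C^2$-bounded family: $\|\wt\alpha_n'\|_\infty = 1$, $\|\wt\alpha_n''\|_\infty = \|\kappa_n\|_\infty \leq c_2$, and the images lie in the disk of radius $P/2$ about the origin (since each $\Omega_n$ is $\sigma$-invariant, hence contains the origin, and has perimeter $P$). By Arzela-Ascoli we pass to a subsequence with $\wt\alpha_n \to \wt\alpha_\infty$ in $C^1$, and further extract so that $s_n \to s_\infty$ and $s_n' \to s_\infty'$ with $|s_\infty - s_\infty'| \geq 1/c_2 > 0$. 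Uniform convergence together with $\|\wt\alpha_n(s_n) - \wt\alpha_n(s_n')\| \to 0$ then forces $\wt\alpha_\infty(s_\infty) = \wt\alpha_\infty(s_\infty')$ while $s_\infty \neq s_\infty'$ in $\R/P\Z$.

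The main obstacle is to rule out that the limit $\wt\alpha_\infty$ degenerates: a priori, a $C^1$ limit of convex curves could collapse onto a doubled segment and fail to be an embedding. This is where the upper curvature bound is essential. By the ``rolling ball'' theorem in convex geometry, the condition $\kappa_n \leq c_2$ implies that a disk of radius $1/c_2$ can be inscribed in $\Omega_n$ tangent to $\partial\Omega_n$ at any prescribed boundary point, so the inradius of each $\Omega_n$ is at least $1/c_2$. Passing to a further subsequence, the Hausdorff limit $K_\infty$ of $\overline{\Omega_n}$ therefore contains a disk of radius $1/c_2$, is a convex body with nonempty interior, and has Jordan-curve boundary parametrized by $\wt\alpha_\infty$. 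Hence $\wt\alpha_\infty$ is an embedding of $\R/P\Z$, contradicting $\wt\alpha_\infty(s_\infty) = \wt\alpha_\infty(s_\infty')$ with $s_\infty \neq s_\infty'$, and completing the proof.
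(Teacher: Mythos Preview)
Your compactness argument is correct and takes a genuinely different route from the paper's proof. The paper gives a direct, hands-on geometric argument: it aligns the short link with the $x$-axis, uses Blaschke's rolling disk of radius $1/\kappa_{\max}$ tangent at the apex of the upper arc, and then runs through a case analysis (does the half-disk cross the $x$-axis or not; where do the points with vertical tangents sit) to show that the upper arc is trapped in a strip of height $O(1/q)$, which cannot contain a disk of radius $1/\kappa_{\max}$ once $q$ is large. Your argument instead extracts a $C^1$ limit curve and shows it would have a self-intersection, which is impossible because the uniform upper curvature bound forces the Hausdorff limit to be a genuine convex body. Both proofs hinge on the same geometric fact (Blaschke's rolling ball gives a uniform inradius bound $\geq 1/c_2$), but you use it globally to prevent collapse of the limit, while the paper uses it locally to derive an explicit contradiction for each fixed $\alpha$. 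Your approach is shorter and more conceptual; the paper's is more explicit and in principle trackable toward a quantitative $q_0$.

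One step in your write-up is asserted a bit quickly: from ``$K_\infty$ is a convex body with nonempty interior and $\partial K_\infty$ is a Jordan curve'' you jump to ``$\wt\alpha_\infty$ is an embedding.'' A priori a unit-speed map onto a Jordan curve could have degree $>1$ or backtrack. The cleanest way to close this is to note that the perimeter is continuous under Hausdorff convergence of convex bodies (Cauchy's formula), so $|\partial K_\infty| = P$; since $\wt\alpha_\infty : \R/P\Z \to \partial K_\infty$ is unit-speed, surjective, and has degree $1$ (winding numbers about interior points of $K_\infty$ are preserved under the uniform convergence $\wt\alpha_n \to \wt\alpha_\infty$), it must be a bijection. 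Alternatively, the uniform lower curvature bound $\kappa_n \geq c_1$ makes the tangent angle functions $\theta_n$ converge to a strictly increasing $\theta_\infty$ with total variation $2\pi$, which directly forces the limit curve to be simple convex. Either remark would make the last paragraph watertight.
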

\begin{proof} We align the Cartesian coordinates so that the segment $ \overline{\wt \alpha(s)  \wt \alpha(s')}$ is on the $x$-axis. Let $Q_1= \wt \alpha(s) $ and $Q_2 = \wt \alpha(s')$ be the right and left intersection points of $\wt \alpha$ with the $x$-axis, respectively. If the tangent lines to $\wt \alpha$ at $Q_1$ and $Q_2$ intersect at a point $T$, we reflect $\wt \alpha$ about the $x$-axis, if necessary,  to place $T$ on the upper half plane. If the tangent lines at $Q_1$ and $Q_2$ are parallel we do not make any reflections. We note that with this setting the upper arc that connects $\wt \alpha(s)$ to  $\alpha(s')$ lies completely inside the triangle $Q_1TQ_2$. Next, we note that there must exists a point $s_0$ on the upper arc whose tangent line is parallel to the $x$ axis and we let $y =t_0$, $t_0>0$, be this tangent line. We then let $f(t)$ to be the length of the part of the line $y=t$ that intersects the domain. Then by the definition of $t_0$ we have $f(t_0) =0$, and by the the convexity of the domain, $f(t)$ is strictly decreasing on $ [0, t_0]$ (see for example Lemma 1.1 of \cite{Am}).  We then locate a disk $S$ of radius $\frac{1}{\kappa_{\max}}$  that is tangent to $\alpha$ at $s_0$ from below. By a theorem of Blaschke (\cite{Bl}, page 116), the entire disk $S$ lies inside $\wt \alpha$. Let $D$ be the diameter of $S$ that is parallel to the $x$-axis and let $S_1$ be the open half-disk that lies above $D$.   If $S_1$ does not intersect the $x$-axis then we claim that $|D|$ must be shorter than the segment $ \overline{\wt \alpha(s)  \wt \alpha(s')}$.  This is because under this assumption, the diameter $D$ lies on $y =t_1$ for some $t_1>0$, and therefore by the monotonicity of $f$,
	$$|D| \leq  f(t_1) <  f(0) =  |\overline{\wt \alpha(s)  \wt \alpha(s')} |. $$
This would imply that 
$$ \frac{2}{\kappa_{\max}} < \frac{4L}{q},$$
which leads to a contradiction if $q \geq 2L \kappa_{\max}$. 
Now suppose the half-disk $S_1$ intersects the $x$-axis and let $\ell$ be the cord of $S_1$ that lies on the $x$-axis. Because the upper part of the domain, hence also $S_1 \cap \{ y >0 \}$, lie inside the triangle $Q_1TQ_2$, the half-disk $S_1$ must in fact  intersect the $x$-axis on the segment  $|\overline{\wt \alpha(s)  \wt \alpha(s')} |$. As a result,
$$|\ell| \leq  |\overline{\wt \alpha(s)  \wt \alpha(s')} | \leq \frac{4L}{q}.$$ 
Since the distance from $s_0$ to $\ell$ (which equals $t_0$) is less than the length of the cord $\ell$, we obtain 
$$t_0 < \frac{4L}{q}. $$
 On the other hand the upper arc of $\wt \alpha$ is contained between the lines $y=0$ and $y = t_0$. Thus the upper part of the domain is contained in the thin strip $ 0 \leq y \leq \frac{4L}{q} $. Let $V$ and $W$ be the two points on the boundary that have vertical tangent lines (there are exactly two by strict convexity). We assume $x(V) < x(W)$, i.e. $V$ is on the left and $W$ is on the right. Since by our assumption, the tangent lines at $Q_1$ and $Q_2$ intersect in the upper half-plane (or they are parallel), it is clear that $x(W) \geq x(Q_2)$ or $x(V) \leq x(Q_1)$. Without the loss of generality we assume $x(W) \geq x(Q_2)$. If in addition also $x(V) \leq x(Q_1)$, then the upper arc of the domain must be contained in the rectangle $ \{ x(Q_1) \leq x \leq x(Q_2),  0 \leq y \leq \frac{4L}{q} \} $, which by convexity implies that, $|s-s'|$, the length of the upper arc is less than 
$$2 \times \frac{4L}{q} + x(Q_2) -x(Q_1) \leq \frac{12L}{q}.$$
Since by assumption $|s-s'| \geq \frac{1}{\kappa_{\max}}$, we obtain a contradiction for $q$ sufficiently large.  Finally assume $x(V) > x(Q_1)$. Then $V$ must be in the upper half-plane because the entire domain is contained in the sector $\widehat{Q_1TQ_2}$. Hence $V$ must belong to the strip $ 0 \leq y \leq \frac{4L}{q}$. We then consider the disk of radius $\frac{1}{\kappa_{\max}}$ that is tangent to $\wt \alpha$ at $V$ from the right. The entire disk must be contained inside the domain. However, the center of the of disk is also contained in $ 0 \leq y \leq \frac{4L}{q}$, thus if we choose $q$ large enough so that $\frac{8L}{q} < \frac{1}{\kappa_{\max}}$, then there must be a point on the disk that lies above the line $y = \frac{1}{\kappa_{\max}} -  \frac{4L}{q} $, which is a contradiction with the assumption that the domain is under the horizontal line $y = t_0 < \frac{4L}{q}$. 
	\end{proof}

The following lemma asserts that when $q$, the number of bounces, is fixed then it is an open dense condition that the lengths of $q$-periodic orbits are simple. 
\begin{lemm} \label{class O}  

Suppose $4L  \neq P$.  Let $q  \in \mathbb N$ and $O_{L, P, c_1, c_2, q}$ be the class of
	$\alpha \in \mathcal E^\omega_{\sigma, L, P, c_1, c_2}$ such that critical points of $ \lcal^{(q)}_{\alpha}$ are all non-degenerate and the critical values are distinct up to the action of the cyclic group $\Z_q$ on $(\R / \Z)^q$.
	Then $O_{L, P, c_1, c_2, q}$ is open dense in $\mathcal E^\omega_{\sigma, L, P, c_1, c_2}$. 
\end{lemm}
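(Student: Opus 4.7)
The plan is to establish openness and density of $O_{L,P,c_1,c_2,q}$ separately. Openness will rest on the compactness of the truncated configuration space $\wt\Omega_q$ from \eqref{COMPACTCONFIG} together with the implicit function theorem, while density will be reduced to a finite-dimensional transversality statement inside a real analytic family of admissible perturbations, where the ``bad set'' to avoid is a proper real analytic subvariety of the parameter ball.

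\textit{Openness.} Fix $\alpha_0 \in O_{L,P,c_1,c_2,q}$. By hypothesis every critical point of $\lcal^{(q)}_{\alpha_0}$ in $\wt\Omega_q$ is non-degenerate, hence isolated, and since $\wt\Omega_q$ is compact there are only finitely many critical $\Z_q$-orbits, with representatives $\vec\theta^{(1)},\dots,\vec\theta^{(k)}$. The implicit function theorem applied to $\nabla_{\vec\theta}\lcal^{(q)}_\alpha$ produces analytic families $\alpha \mapsto \vec\theta^{(i)}(\alpha)$ on a $C^\omega$-neighborhood $\mathcal U$ of $\alpha_0$, with Hessians and critical values depending continuously on $\alpha \in \mathcal U$. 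Shrinking $\mathcal U$, non-degeneracy of each $\text{Hess}\,\lcal^{(q)}_\alpha(\vec\theta^{(i)}(\alpha))$ persists, and the finitely many critical values remain pairwise distinct; the curvature constraint $c_1 < \kappa_\alpha < c_2$ is also open. One must also check that no new critical orbits appear in $\mathcal U$, which follows from the $a$-priori exclusion of creeping rays (already built into $\wt\Omega_q$) together with a standard degree/continuity count.

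\textit{Density via finite-dimensional reduction.} Given $\alpha \in \mathcal E^\omega_{\sigma,L,P,c_1,c_2}$, I would introduce an $N$-parameter analytic family of admissible perturbations
\begin{equation*}
\alpha_{\vec t}(\theta) \;=\; \alpha(\theta) + \sum_{i=1}^{N} t_i\, \dot\rho_i(\alpha(\theta))\, \nu(\alpha(\theta)), \qquad \vec t \in B_\delta(0) \subset \R^N,
\end{equation*}
where the $\dot\rho_i$ are $\sigma$-invariant real analytic functions along $\alpha$ that vanish together with their first derivatives at the two endpoints of $\gamma$ (keeping the segment $\gamma$ and its length $L$ intact) and are orthogonalized against the linear functional $dP$ (keeping the perimeter $P$ fixed). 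For $\delta$ small the bounds $c_1 < \kappa < c_2$ are preserved. The map $\vec t \mapsto \lcal^{(q)}_{\alpha_{\vec t}}$ is real analytic into the space of analytic functions on $\wt\Omega_q$, so its critical points and their Hessians depend analytically on $\vec t$. Consequently the sets
\begin{equation*}
\Sigma_1 = \{\vec t : \text{some critical point of } \lcal^{(q)}_{\alpha_{\vec t}} \text{ is degenerate}\}, \quad \Sigma_2 = \{\vec t : \text{two inequivalent critical values coincide}\}
\end{equation*}
are real analytic subvarieties of $B_\delta(0)$. Since a proper real analytic subset of $B_\delta(0)$ has empty interior, density reduces to showing that $\Sigma_1 \cup \Sigma_2 \neq B_\delta(0)$ for a suitable choice of the directions $\dot\rho_i$.

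\textit{The main obstacle: transversality.} The hard part is producing enough admissible $\dot\rho_i$ to move off $\Sigma_1$ and $\Sigma_2$. For $\Sigma_2$, I would use the classical first variation formula $\delta \lcal^{(q)}(\vec\theta)(\dot\rho) = -2\sum_j \cos\phi_j\, \dot\rho(\alpha(\theta_j))$, where $\phi_j$ are the reflection angles at the vertices of the critical orbit: two inequivalent critical orbits either have different vertex sets or identical vertex sets but different angles, and in either case a $\dot\rho$ concentrated near a single vertex of one of them separates the two critical values to first order; finitely many such bump directions suffice. For $\Sigma_1$, I would compute the first variation of $\text{Hess}\,\lcal^{(q)}$ at a prescribed critical point; this linearization depends on the $2$-jet of $\dot\rho$ at the vertices and, by a direct calculation in the spirit of Lemma \ref{hpq}, surjects onto the space of symmetric matrix variations compatible with the $\Z_q$ and $\sigma$ symmetries, so some admissible direction moves a vanishing Hessian eigenvalue off zero. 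The delicate point is assembling these finitely many directions simultaneously while preserving the two global constraints $dL = dP = 0$ and the $\sigma$-invariance; this is a linear algebra check that boils down to verifying that the codimension of the constraint subspace inside the space of admissible $\dot\rho$ is much smaller than the finite number of transversality conditions to impose.
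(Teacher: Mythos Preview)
Your openness argument is essentially the paper's: compactness of $\wt\Omega_q$, implicit function theorem to track the finitely many critical orbits, and continuity of Hessians and critical values. The paper is even briefer here, simply noting that non-degeneracy and distinctness persist under small analytic perturbations.

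For density, your route and the paper's diverge in packaging, though not in spirit. The paper invokes the abstract machinery of the Appendix: non-degeneracy is reformulated as transversality of $\nabla\lcal^{(q)}_{\wt\alpha}$ to the zero section, genericity in $C^\infty$ follows from Thom transversality, and the passage to $C^\omega$ is handled by the Broer--Tangerman heat-kernel approximation argument of \cite{BrT86}. For distinctness of critical values the paper argues exactly as you do---by contradiction, assuming a coincidence $\lcal^{(q)}_{\wt\alpha}(x_\alpha)=\lcal^{(q)}_{\wt\alpha}(y_\alpha)$ persists on an open neighborhood, tracking $x_\alpha,y_\alpha$ by the implicit function theorem, and separating by a first variation $X_0$ in the (rich enough) tangent space $T_{\alpha_0}\mathcal E^\omega_{\sigma,L,P,c_1,c_2}$. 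Your explicit finite-dimensional family $\alpha_{\vec t}$ is an alternate, more hands-on implementation of the same idea; what it buys you is that you avoid quoting the Broer--Tangerman framework, at the cost of having to check the surjectivity statements (your ``main obstacle'') by hand.

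One technical slip to flag: your claim that $\Sigma_1$ and $\Sigma_2$ are \emph{real analytic subvarieties} of $B_\delta(0)$ is not quite right. These sets are projections to $B_\delta(0)$ of analytic incidence varieties in $B_\delta(0)\times\wt\Omega_q$ (or in $B_\delta(0)\times\wt\Omega_q\times\wt\Omega_q$), and projections of analytic sets are in general only subanalytic. Relatedly, the sentence ``its critical points and their Hessians depend analytically on $\vec t$'' presupposes non-degeneracy (via the implicit function theorem) and so cannot be used as input to analyze $\Sigma_1$ itself. None of this is fatal: $\Sigma_1\cup\Sigma_2$ is closed (by compactness of $\wt\Omega_q$), and your first-variation arguments show it has empty interior once you exhibit a single good $\vec t$, which is all you need. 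But the ``proper analytic subvariety $\Rightarrow$ empty interior'' shortcut, as stated, needs to be rephrased.
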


\begin{proof} The statement is well-known for the space of $C^{\infty}$ embeddings. By an argument given in the Appendix Section \ref{APPENDIX},
one can deduce the statement in the space of real analytic embeddings by controlled approximation of smooth embeddings by real analytic ones.
We therefore only sketch the argument.

Since $P$ is fixed,  we shall use the arclength parameterization to denote the configuration space, 
	\begin{equation*} \label{NCONFIG} \Omega_q =(\R / P \Z)^q \backslash \Delta_q.  \end{equation*}
	Moreover, by the proof of the previous lemma, the links cannot be shorter than $C/{q}$ so we may delete an open tubular  neighborhood of radius  $\frac{C}{q}$ around
	$ \Delta_q$ to obtain  a compact set 
	\begin{equation} \label{COMPACTCONFIGb} \wt \Omega_q= (\R / P \Z)^q \backslash T_{\frac{C}{q}} (\Delta_q).  \end{equation} The constant $C$ only depends on $L$, $P$, $c_1$ and $c_2$.  The length function is real analytic on $\wt \Omega_q$. As a result, for any $\alpha$, it has only
	finitely many critical values. We then use an abstract argument showing that the class $O_{L, P, c_1, c_2, q}$, defined in the statement of the lemma, is open and dense.  
	
	The function $\lcal_{\wt \alpha}^{(q)}(s)$ has only finitely many critical values on $\wt \Omega_q$ \cite{SS72}. If $\alpha_0$ is such that the critical points of $ \{\lcal_{\wt \alpha_0}^{(q)} \} $
	are non-degenerate with distinct critical values, then the same is true for any analytic perturbation of $\lcal_{ \wt \alpha_0}^{(q)}$ which is small in the
	real analytic topology.  Thus openness follows. 
	
	To prove denseness we first prove denseness of the non-degeneracy condition. 	
 The condition that the critical point is non-degenerate is equivalent to the statement that $\nabla \lcal_{\wt \alpha}^{(q)}$ is transversal to the
	zero section of $T^* \wt \Omega_q$. By the Thom transversality theorem, the non-degeneracy is an open dense property of smooth functions; 
	using an approximation argument as in the Appendix Section \ref{APPENDIX}, it is also an open dense property for analytic functions (see also
	\cite[Theorem 3.3]{BeMa}).
	
	%The fact that the length functions are generically Morse should follow from \cite[Theorem 3.3]{BeMa} since
	%the space of real analytic embeddings is a real analytic manifold.

	Next, we assume all critical points of $\lcal_{ \wt\alpha}^{(q)} $  are non-degenerate, hence in particular isolated, and prove the denseness of the distinctness property of the critical values.   So suppose that there exists an open neighborhood $\ucal$ around $\alpha_0$ so
	that for all $\alpha \in \ucal$, there exist $x_\alpha, y_\alpha \in \wt \Omega_q$ that are distinct up to the action of cyclic group $\Z_q$, such that 
	$$ \nabla  \lcal_{ \wt \alpha}^{(q)}(x_\alpha)= 0 = \nabla  \lcal_{ \wt \alpha}^{(q)}(y_\alpha),$$
	and  $$  \lcal_{ \wt \alpha}^{(q)}(x_\alpha) = \lcal_{ \wt \alpha}^{(q)}(y_\alpha). $$
By the non-degeneracy of critical points, we know that $x_{\alpha}$ and $y_{\alpha}$ vary smoothly in $\alpha$. Taking first variations, we obtain
$$d  \left ( \nabla  \lcal_{ \wt \alpha}^{(q)}(x_\alpha) \right )\big\rvert _{\alpha_0} (X_0)=0=d  \left ( \nabla  \lcal_{ \wt \alpha}^{(q)}(y_\alpha) \right )\big\rvert _{\alpha_0} (X_0), $$ 
and $$  d \left (\lcal_{ \wt \alpha}^{(q)}(x_\alpha) \right )\big\rvert _{\alpha_0} (X_0) =  d \left ( \lcal_{ \wt \alpha}^{(q)}(y_\alpha) \right) \big\rvert _{\alpha_0}(X_0).$$
Again, because the infinitesimal space $T_{\alpha_0} \mathcal E^\omega_{\sigma, L, P, c_1, c_2}$ is rich enough, there exists $X_0$ that separates these equations (including the constraints \eqref{P and L preserved}). 

\end{proof}
We are now in position to prove the main result of this section in the strictly convex case.  

\begin{proof}[\textbf{Proof of Proposition \ref{open dense and generic}, convex case}]
Suppose  $4L \neq P$.  Let $\mathcal C_{\sigma, L, P}$  be the class of strictly convex domains $\Omega$ whose boundaries are parameterized by elements of $\mathcal E^\omega_{\sigma, L, P}$ and such that $2L$ and $4L$ are simple in the length spectrum of $\Omega$.  

We first note that
$$\mathcal E^\omega_{\sigma, L, P, 0, \infty} = \bigcup_{n=1}^\infty \mathcal E^\omega_{\sigma, L, P, \frac{1}{n}, n} .$$
This is precisely the class of strictly convex domains in $\mathcal E^\omega_{\sigma, L, P}$  with no further assumptions (hence no simplicity assumptions on lengths). Let $q_0(n)$ be the bounce number found in Lemma \ref{MAINLEM} for the class $\mathcal E^\omega_{\sigma, L, P, \frac{1}{n}, n}$ so that all $q$-periodic orbits, $q > q_0(n)$, have lengths $> 4L$ . By Lemma \ref{class O}, 
$$ I_n:= \bigcap_{q=1}^{q_0(n)} O_{L, P, \frac{1}{n}, n, q} $$
is open dense in  $\mathcal E^\omega_{\sigma, L, P, \frac{1}{n}, n}$ and by Lemma \ref{MAINLEM} we have $ I_n \subset \mathcal C_{\sigma, L, P}$.  We claim that 
$$ \bigcup_{n=1}^\infty I_n  = \bigcup_{n=1}^\infty \bigcap_{q=1}^{q_0(n)} O_{L, P, \frac{1}{n}, n, q}  $$
is open dense in $\mathcal E^\omega_{\sigma, L, P, 0, \infty} = \bigcup_{n=1}^\infty \mathcal E^\omega_{\sigma, L, P, \frac{1}{n}, n}$. The denseness is obvious. For the openness, we note that $\mathcal E^\omega_{\sigma, L, P, \frac{1}{n}, n}$ is open in $\mathcal E_{\sigma, L, P, 0, \infty}$, thus $I_n$ is open in $\mathcal E^\omega_{\sigma, L, P, 0, \infty}$ and hence also the countable union $\bigcup_{n=1}^ \infty I_n$. Finally, since 
$$  \bigcup_{n=1}^ \infty I_n \subset \mathcal C_{\sigma, L, P}, $$ 
 $\mathcal C_{\sigma, L, P}$ is also dense. The openness follows from a similar argument as in the proof of Prop \ref{class O}; we only need to change the assumption of distinctness of all critical values, to distinctness from $2L$ and $4L$. 

\end{proof}

\subsection{\label{GLIDINGSECT} Proof of Proposition \ref{open dense and generic}, non-convex case} 

To prove Proposition \ref{open dense and generic} for non-convex domains, we begin by extending the discussion of periodic orbits
and length functions from transversal reflecting rays of Section \ref{LENGTHSECT1} to the more general types of periodic orbits which can exist
for non-convex domains. We assume throughout the discussion that the domain is real analytic.

   A real analytic boundary consists of a finite number
of convex parts, a finite number of concave parts, separated by (possibly degenerate)  inflection points at which curvature vanishes and the boundary changes concavity.
Non-convex domains  may possess periodic orbits that are not $(p,q)$ periodic transversally reflecting rays. Namely, there may exist  periodic
orbits which glide along some of the (disjoint)  convex parts of the boundary.  Examples of periodic orbits which glide along one or more convex parts of the boundary are illustrated in \cite{GuMe79} and \cite{PS17}.  In the non-convex case, there do not exist periodic orbits which glide along the
entire boundary; they can only glide along the convex parts. 

We call a periodic orbit   ``gliding/linear'' if it has non-empty gliding part. A crucial feature of such orbits is that the gliding segments must coincide with a full convex part,
hence the entry and exit points of the orbit must be inflection points and there must exist linear segments at each endpoint which are tangent to
the boundary at the inflection point. With no loss of generality, we may assume the curvature vanishes to order 1 at each inflection point, and then
the linear segment has a unique continuation to the full convex part into which it enters (or, in reverse time, exits; see Section \ref{INFLECT}). Consequently, to find critical 
configurations of points on $\partial \Omega = \alpha(\R/\Z)$ corresponding to gliding/linear periodic orbits, it is only necessary to ensure that
the linear segments (`links') that touch inflection points do so tangentially to $\partial \Omega$.
 Note that it is possible for  a linear segment to
enter  the boundary tangentially at both of its  endpoints, so that the orbit glides along two `opposite' convex segments, or to   hit the boundary transversally at the second  endpoint.

 % In \cite{PS87}, a  periodic billiard trajectory $\gamma$ is called a  `degenerate broken ray' if it satisfies the conditions:
 % \begin{itemize}
%  \item $\gamma = \bigcup_{i=1}^{k-1} \ell_i, \;\; \ell_i = [x_i, x_{1+1}], i = 1, \dots, k-1; $
  
%  \item 
  
%  \end{itemize} 

According to \cite[Theorem 9.4]{PS87}, generic $C^{\infty}$ simply connected domains in $\R^2$  have no periodic billiard trajectories containing
both gliding segments and linear segments. The new step in the non-convex case is to prove (or, cite) the analogous result in the analytic category.

\begin{lem} \label{ANALYTICNC} A generic real analytic non-convex simply connected domain has no gliding/linear periodic billiard trajectories.
\end{lem}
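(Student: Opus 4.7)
The plan is to transfer the $C^\infty$-genericity result \cite[Theorem 9.4]{PS87} to the real analytic category via a controlled approximation scheme, in the spirit of Lemma \ref{class O} and the Appendix. The key geometric observation, stated in the paragraph preceding the lemma, is that every gliding/linear periodic trajectory must contain at least one link whose endpoint lies at an inflection point of $\partial\Omega$ and whose direction coincides with the tangent line at that inflection point. Fixing the combinatorial type of the orbit, consisting of the number of linear links $q$ and the number of gliding arcs $k$, this imposes at least one scalar tangency equation beyond the ordinary reflection law and closing-up conditions of a transversal Snell polygon, so the corresponding locus in $\mathcal{E}^\omega_{\sigma,L,P}$ should be of positive codimension.

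First I would bound the combinatorial data. As in Lemma \ref{MAINLEM}, on a subclass with uniform curvature bounds on the convex arcs, the minimal link length of any periodic orbit is bounded below, so for each prescribed total length $\le n$ only finitely many combinatorial types $(q,k)$ need be considered. Denote by $G_{q,k,n} \subset \mathcal{E}^\omega_{\sigma,L,P}$ the set of $\alpha$ whose domain admits a gliding/linear orbit of type $(q,k)$ with total length at most $n$; the task reduces to showing that each $G_{q,k,n}$ is closed and nowhere dense. Closedness follows because $G_{q,k,n}$ is defined by a system of real analytic equations (positions of inflection points and transversal vertices, reflection law, tangency condition at inflection points, gliding condition, closing-up) in finitely many real parameters that depend continuously on $\alpha$ in the analytic topology, and these conditions are preserved under analytic limits.

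For nowhere denseness, I would construct an infinitesimal normal deformation $X = \dot\rho\,\nu$ along $\alpha$, supported in a small $\sigma$-invariant neighborhood of one inflection point incident to the orbit, satisfying the constraints \eqref{P and L preserved} and whose first-order effect changes the tangent direction of $\partial\Omega$ at that inflection point. Such a $\dot\rho$ is produced in $C^\infty$ exactly as in \cite[Theorem 9.4]{PS87}, and is then approximated by real analytic functions using the Fourier truncation device of \eqref{FS1}, which converges in the analytic topology of any fixed cylinder $T_\tau$. For $\tau$ small enough the truncation retains the first-order breaking of the tangency condition, yielding analytic deformations lying outside $G_{q,k,n}$ arbitrarily close to $\alpha$ in $C^\omega$.

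Finally, since $\mathcal{E}^\omega_{\sigma,L,P}$ is a Baire space by \cite[Section 2]{BrT86} and \cite{Cl20}, the countable intersection over $(q,k,n)$ of the open dense complements $\mathcal{E}^\omega_{\sigma,L,P}\setminus G_{q,k,n}$ is residual, and every domain in this intersection admits no gliding/linear periodic trajectory. The main obstacle is the nowhere-denseness step: one must check that the chosen deformation genuinely breaks the tangency condition and is not compensated by motion of the other vertices of the orbit, and that the Fourier-truncated analytic approximation preserves this transversal effect. This is where the $\sigma$-symmetry constraint requires care, since half of the natural test deformations are ruled out and one must exhibit enough $\sigma$-equivariant admissible perturbations to reach every inflection point orbit of $\sigma$.
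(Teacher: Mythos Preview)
Your strategy is in the right spirit---transfer \cite[Theorem 9.4]{PS87} from $C^\infty$ to $C^\omega$---but it diverges from the paper's argument in a way that introduces real gaps.

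The paper does \emph{not} bound the combinatorial data by a length cutoff.  Instead it fixes an ordering map $\iota$ on $s$ vertices and works directly in the multi-jet space $J_s^2(S^1,\R^2)$: the locus $\Sigma$ of $2$-jets satisfying (i) curvature vanishing at the first and last vertices, (ii) the Snell law at the interior vertices, and (iii) tangency of the initial link, has $\codim\Sigma = s+1$ by \cite[Lemma 9.2]{PS87}.  Since the source $(S^1)^{(s)}$ has dimension $s$, transversality of $j_s^2\alpha^s$ to $\Sigma$ is equivalent to non-intersection, and the set $T_\iota^\omega$ of embeddings with this property is therefore exactly a Kupka--Smale class in the sense of \cite{BrT86}.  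The Broer--Tangerman heat-kernel unfolding argument then gives open-density of $T_\iota^\omega$ in $C^\omega$, and one intersects over the countable set of $\iota$.

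Your route has three problems.  First, the Lemma~\ref{MAINLEM} mechanism for bounding the number of links relies on strict convexity (Lazutkin coordinates, Blaschke's rolling disk); for non-convex domains there is no analogous a priori bound on $q$, so your restriction to finitely many $(q,k)$ for each length $\le n$ is unjustified.  Second, your closedness claim for $G_{q,k,n}$ is not obvious: the inflection points of $\alpha$ do not depend continuously on $\alpha$ in general (they can merge, split, or disappear), so ``a system of real analytic equations in finitely many parameters depending continuously on $\alpha$'' is not available.  Third, and most seriously, your nowhere-density step---perturb the tangent direction at one inflection point---is exactly the place where the paper's codimension computation does the real work.  Under a perturbation the inflection point itself moves and the other vertices move with it; showing that no nearby configuration restores the tangency is precisely the content of $\codim\Sigma > s$, and you cannot avoid something equivalent to that calculation.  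The multi-jet transversality formulation packages all of this cleanly and is what you should invoke.
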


\subsubsection{\label{INFLECT} Inflection points}

The first technicality is that the exit and entry points should be inflection points of the boundary, where the curvature vanishes to order 1.
It is proved in  \cite[Theorem 9.4]{PS87} that the set of embeddings for which all points of vanishing curvature are inflection points is
residual. The main step is to prove that the subset of  3-jets of embeddings for which the curvature vanishes to second or higher
order at some point is a submanifold of codimension $2$. The proof applies with almost no change to real analytic domains, and we only sketch
the proof. The  condition that the curvature
 vanish to order $\geq 2$ at a point $x$  is given by explicit equations \cite[(B.1)-(B.2)]{PS87}. Since the 3-jet space is the same for $C^{\infty} $ and $C^{\omega}$ domains, the same proof implies that
the set of 3-jets of $C^{\infty} $ domains containing a point where the curvature vanishes to   order $\geq 2$ at some point is of codimension $\geq 2$.

In the case of a real analytic domain, there can only exist a finite number of points where the curvature vanishes.  It suffices to show that for each
number $m$ of points where the curvature vanishes it is an open dense condition that the curvature vanishes only to order $1$. But this follows from
the fact that the set of embeddings where the curvature vanishes to order $\geq 2$ is closed and nowhere dense.

\subsection{Proof of Lemma \ref{ANALYTICNC} }
Let $\gamma$ be a generalized geodesic ray and let    $\{x_1, \dots, x_s\}$ denote the endpoints of the linear segments. By `generalized geodesic ray' it is understood that consecutive transversal links satisfy the reflection law of equal angles at their common endpoints.  There
exists an oriented ordering of the endpoints by $\iota: \{1, \dots, k\} \to \{1, \dots, s\}$, where $k$ is the number of distinct elements of 
$ \{1, \dots, s\}$.\footnote{$\iota$ is denoted by $\omega$ in \cite{PS87}.}. so that the successive  linear segments with respect to
the counter-clockwise orientation of $\alpha(S^1)$ are given by $\ell_j = [\alpha(x_{\iota(j)}), \alpha(x_{\iota(j+1)})]$. 
Following \cite[Section 2.6]{PS87} (see also \cite[Pages 660-661]{PS87}), we denote by $I_i(\omega) $ the set of indices $j$ for which there exists a segment of $\gamma$ joining
$x_i$ to $x_j$ and we denote by $U_{\iota} = \{(x_1, \dots, x_s) \in (\R^2)^s: x_i \notin CH \{x_j: j \in I_i(\iota)\}, \forall i \in \rm{Im} (\iota)\}$ (where
CH denotes the convex hull).
Periodic reflecting rays and gliding/linear rays of type $\iota$ always lie in $U_{\iota}$. 

We denote by $T^{\infty}_{\iota}$, resp. $T^{\omega}_{\iota}$,  the set of smooth, reps. analytic, embeddings $\alpha$ into $\R^2$ such that for every $\vec x \in (\partial \Omega)^{(s)}$ with $\alpha^s(x) = (\alpha(x_1), \dots, \alpha(x_s)) \in U_{\iota}$, 
$$\kappa_{\alpha} (x_1) = \kappa_{\alpha}(x_s) = 0 \implies \left\{\begin{array}{ll} \langle \overline{\alpha(x_2) - \alpha(x_1)}, \nu_{\alpha(x_1)} \rangle
\not=0, \; & \text{or} \\ &\\
\nabla_{x'} \lcal_{\alpha}(x_1, x', x_{s}) \not=0. & \end{array} \right. $$
In other words, if the intial link $\overline{\alpha(x_2) - \alpha(x_1)}$ hits $\partial \Omega$ at an inflection point $x_1$ and the terminal 
link hits an inflection point,  then either the initial link  $\overline{\alpha(x_2) - \alpha(x_1)}$  is transversal to
$\partial \Omega$ at $\alpha(x_1)$ or the polygonal trajectory does not satisfy the Snell law of equal angles at the intermediate points, i.e. is
not a generalized geodesic. It is proved in \cite{PS87} that $T^{\infty}_{\iota}$ is residual in $C^{\infty}_{\rm{emb}}(S^1, \R^2)$. 

Let $J_s^2(S^1, \R^2)$ denote the space of $2$-jets of smooth embeddings $$(\alpha_1(x_1), \dots, \alpha_s(x_s)): (S^1)^s \to \R^{2s}.$$
Define the source, resp. target, maps by,
$$\beta_1(j^2 f(x)) = x, \; \text{resp.}\;\; \beta_2(j^2 \alpha(x)) = \alpha(x), $$
and extend them to maps $\beta_1^s$, resp. $\beta_2^s$, on  symmetric products (cf. \cite[Page 627]{PS87}).
  Given an open subset
$U \subset (\R^2)^s$ let $M = (\beta_1^s)^{-1} (S^1)^{(s)} \cap (\beta_2^s)^{-1}(U). $ Let
\begin{equation} \label{sigma} \sigma = (j^2 \alpha_1(x_1), \dots, j^2 \alpha_s(x_s)) \in M.  \end{equation}
Let $\Sigma $ be the set of $\sigma$ in \eqref{sigma} such that (see \cite[Page 661]{PS87}):
% {\red{In the definition of $\sigma$ shouldn't we take $U$ to be $U_\iota$?}}

\begin{itemize}
\item  the curvature of $\alpha_1(S^1)$, resp. $\alpha_s(S^1)$, vanishes at  $\alpha_1(x_1)$, 
resp. $\alpha_s(x_s)$; 
\item $\nabla_{x'} \lcal^{k}_{ (\alpha, \dots, \alpha_s), \iota} (x_1, \dots, x_s)  =0$, where $$\lcal^{k}_{ (\alpha, \dots, \alpha_s), \iota} (x_1, \dots, x_s) = \sum_{j = 1}^{k} \|\alpha_{\iota(j+1)} (x_{\iota(j+1)}) -\alpha_{\iota(j)}(x_{\iota(j)} ) \|;$$  
\item $ \langle \overline{\alpha_2 (x_2) - \alpha_1(x_1)}, \nu_{\alpha_1(x_1)} \rangle =0$. 
\end{itemize}

 Given $\alpha \in C^{\infty}(S^1, \R^2)$ define the induced embedding,

$$\left\{\begin{array}{ll}j_s^2 \alpha^s: (S^1)^{(s)} \rightarrow J_s^2 (S^1, \R^2), \; &  \\ &\\ j_s^2 \alpha^s(x_1, \dots, x_s) := (j^2 \alpha(x_1), \dots, j^2 \alpha(x_s)). 
	 & \end{array} \right.$$

 By definition, \begin{equation} \label{Tomega} T^{\infty}_{\iota} = \{\alpha \in C^{\infty}_{\rm{emb}}(S^1, \R^2): j_s^2 \alpha^{s} (S^1)^{(s)} \cap \Sigma = \emptyset\}. \end{equation}
 
According to \cite[Lemma 9.2]{PS87}, \begin{equation} \label{CODIMSIGMA} \codim \Sigma = s +1.  \end{equation}
This calculation is the same for real analytic and for smooth embeddings. It follows from \eqref{Tomega} and from \eqref{CODIMSIGMA} that,
\begin{equation} \label{Tomega2} T_{\iota}^{\omega} = \{\alpha \in C^{\omega}_{\text{emb}}(S^1, \R^2): j_s^2 \alpha^{s} \transv \Sigma\}. \end{equation}
(Indeed, for the above dimensions, transversality is equivalent to non-intersection. E.g. for 0-jet transversality, if $f: M \to N$ and $Q \subset N$,
if $\dim M < \rm{codim} Q$,
% {\red{Perhaps it should be $\dim M < \text{codim} Q$? For the above example $\dim M=s$ and $\text{codim} Q =s+1$. }} 
then $f \transv Q$ if and only if $f(M) \cap Q = \emptyset$.)

The equation \eqref{Tomega2} is a ``Kupka-Smale'' reformulation of the desired transversality condition in the sense of \cite{BrT86} (see
Section \ref{APPENDIX}). The multi-jet transversality theorem (see Section \ref{APPENDIX}) in the $C^{\infty} $ category implies that \eqref{Tomega} is residual in $C^{\infty}_{\rm{emb}}(S^1, \R^2).$
Since   $\iota$ varies  in a countable set, the set of domains without  periodic gliding/linear rays is residual. By the argument of \cite{BrT86} (see
Section \ref{APPENDIX}), the sets \eqref{Tomega2} are residual in  $C^{\infty}_{\rm{emb}}(S^1, \R^2)$.

For the sake of expository clarity, we sketch a proof here. Let $\alpha_0$ be a real analytic embedding. We wish to embed $\alpha_0$ into a sufficiently
rich family $F$ of real analytic maps so that the multi-2-jet $j_s^2 F$ of the family is transversal to $\Sigma$. By the multi-jet transversality theorem,
this is possible in the $C^{\infty}$ category. Then we approximate $F$ by a real-analytic family $F^{\omega}$ using the heat kernel technique
of \cite{BrT86}. For small enough time in the heat kernel, the family remains transversal to $\Sigma$. It then follows that the set of elements in
the family which are not transversal to $\Sigma$ has positive codimension in the family. This proves existence of small real analytic perturbations
which are transversal to $\Sigma$ (hence dense-ness). Moreover, the transversality theorem also proves that the set of $C^{\infty}$ transversal
maps is open for any $s$ and $\iota$ and cutoff from the diagonals of the configuration spaces,  and therefore it is open in $C^{\omega}$. Since we need to take the intersections over $s, \iota$ and cutoffs, the set of real analytic $\alpha$ which have no gliding/linear periodic rays is residual
in $C^{\infty}$.

 This proves Proposition \ref{open dense and generic} in the non-convex case.

\section{\label{APPENDIX} Appendix on the multi-jet transversality theorem and Kupka-Smale conditions in the $C^{\omega}$ category}

In this appendix, we review some definitions and results concerning genericity of various properties in  real analytic function spaces.
We first review the Thom transversality theorem and the Thom jet transversality theorem for $f \in C^{\infty}(M, N)$, the space of smooth maps $f: M \to N$ between two compact manifolds
(see \cite[Section 2, Theorem 2.1]{Hir} for the Thom transversality theorem and  \cite[Section 3, Theorem 2.8]{Hir} for the jet transversality theorem).
The simple transversality theorem is, 

\begin{theo} Let $M, N$ be two $C^{\infty}$ manifolds and let $A \subset N$ be a smooth submanifold. Let $$T(A): = \{f \in C^{\infty}(M, N):
 f \transv A\}. $$
Then,
\begin{itemize} 
\item For any submanifold $A$, $T(A) $ is residual in $C^{\infty}(M, N)$ (hence, dense); \bigskip 
\item If $A$ is compact and without boundary,  then $T(A) $ is open-dense in $C^{\infty}(M, N)$.

\end{itemize}

\end{theo}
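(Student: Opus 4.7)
The plan is the standard two-step reduction: first prove a parametric transversality lemma, then globalize via a locally finite cover of $M$ by charts and use a Baire category argument. For the openness statement, I would argue pointwise using the implicit function theorem together with compactness of $A$.

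First I would establish the parametric transversality statement: if $F \colon M \times P \to N$ is a smooth map with $F \transv A$, where $P$ is a finite-dimensional smooth manifold of parameters, then for a residual (indeed full-measure) set of $p \in P$ the partial map $F_p = F(\,\cdot\,, p) \colon M \to N$ satisfies $F_p \transv A$. The proof is by applying Sard's theorem to the projection $\pi \colon W \to P$, where $W = F^{-1}(A)$ is a submanifold of $M \times P$ by the assumption $F \transv A$; the regular values of $\pi$ are exactly the parameters for which $F_p \transv A$ (one checks this by a direct linear-algebra computation on tangent spaces, writing $d F_{(x,p)} = d F_p \oplus d F(\,\cdot\,, x)$ and using $\ker d\pi = T W \cap (TM \oplus 0)$).

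Next, to obtain density in $C^{\infty}(M,N)$, given any $f \in C^{\infty}(M,N)$ I would build an auxiliary smooth family $F \colon M \times P \to N$ with $F_0 = f$ that is itself a submersion (hence automatically $F \transv A$). Locally in a chart of $N$ around any point of $f(M)$ one can take $P$ to be a small ball in $\R^{\dim N}$ and $F(x,p) = f(x) + p$. One then uses a countable locally finite covering $\{U_i\}$ of $M$ by charts and bump functions $\chi_i$ to assemble a global family, and inductively perturbs on the successive charts so that the perturbed map is transversal to $A$ on $\overline{U_i}$; each step consumes a residual set of parameters by the parametric statement, and the countable intersection remains residual by Baire. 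This gives $T(A)$ as a residual set in $C^{\infty}(M,N)$ equipped with the strong (Whitney) $C^{\infty}$ topology.

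For the open-ness half, assume $A$ is compact without boundary and let $f \in T(A)$. At each $x \in f^{-1}(A)$ the condition $d f_x (T_x M) + T_{f(x)} A = T_{f(x)} N$ is an open condition on the $1$-jet $j^1 f(x)$; by the tubular neighborhood theorem we may choose a neighborhood $V$ of $A$ in $N$ and coordinates in which transversality becomes the surjectivity of a certain linear map depending continuously on $j^1 f$. Since $A$ is compact and $f \transv A$, the set $K = f^{-1}(\overline{V})$ is compact (for a slightly smaller tubular neighborhood), and the transversality inequality is uniform on $K$; thus it persists under any $C^{1}$-small perturbation of $f|_K$. Because $A$ is closed, perturbations outside $K$ do not affect transversality, and we conclude that $T(A)$ is open. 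Density follows from the first part, so $T(A)$ is open-dense. The only place that requires care is patching in the density argument, where one must keep the perturbations smaller and smaller on later charts so as not to destroy transversality already achieved on earlier ones; this is standard and handled by choosing the perturbation at step $i$ in the strong topology defined by the cover.
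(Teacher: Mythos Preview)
The paper does not prove this theorem; it is stated in the appendix as background, with a citation to Hirsch, \emph{Differential Topology} (Section~2, Theorem~2.1 and Section~3, Theorem~2.8), and the only further comment is Remark~\ref{SARD} noting that such results are typically proved via the Morse--Sard theorem. Your outline is essentially the standard Hirsch/Guillemin--Pollack argument, so in spirit it matches what the paper invokes by reference.

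That said, there is one genuine gap in your openness argument. You assert that for $A$ compact and $f \transv A$, the set $K = f^{-1}(\overline{V})$ is compact for a suitable tubular neighborhood $\overline V$ of $A$. This is false when $M$ is noncompact: for instance, take $M=N=\R^2$, $A=\{0\}$, and $f(x,y)=(\sin x,\,y)$; then $f \transv A$ but $f^{-1}(A)=\{(k\pi,0):k\in\Z\}$ is unbounded, and the same holds for any tubular neighborhood of $A$. The correct argument in the strong (Whitney) topology does not use compactness of the preimage at all: one observes that transversality is a local condition on $j^1 f$, covers $M$ by open sets $U_x$ on each of which either $f(U_x)\cap A=\emptyset$ or $f|_{U_x}\transv A$, passes to a locally finite refinement, and takes the associated strong-topology neighborhood of $f$. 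Your density argument is fine in outline, though the phrase ``the countable intersection remains residual by Baire'' conflates two things: what you actually need is that each set $\{g : g|_{\overline{U_i}} \transv A\}$ is open and dense in $C^{\infty}(M,N)$ (openness uses compactness of $\overline{U_i}$, density uses your parametric step), and then Baire applies to their countable intersection.
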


The jet transversality theorem is,

\begin{theo} Let $M, N$ be two $C^{\infty}$ manifolds and let $W \subset J^k(M, N)$ be a smooth submanifold and for any $k \geq 0$,  let $$T(W): = \{f \in C^{\infty}(M, N):
j^k f \transv W\}. $$
Then,
\begin{itemize} 
\item For any submanifold $W$, $T(W) $ is residual in $C^{\infty}(M, N)$ (hence, dense); \bigskip 
\item If $W$ is compact and without boundary  in $J^k(M, N)$, then $T(W) $ is open in $C^{\infty}(M, N)$.

\end{itemize}

\end{theo}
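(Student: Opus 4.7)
The plan is to reduce the claim to the simple (non-jet) transversality theorem stated just above by means of the classical parametric perturbation argument. First I would work locally: pick countable covers $\{U_i\}$ of $M$ by precompact coordinate neighborhoods (with $\overline{U_i}$ contained in a larger coordinate chart) and $\{V_j\}$ of $N$. Since $C^\infty(M,N)$ is a Baire space and residual sets are closed under countable intersection, it suffices to show, for each pair $(i,j)$, that
\[
T_{i,j}(W) := \bigl\{f \in C^\infty(M,N) :\ j^k f(x) \transv W \text{ for every } x \in \overline{U_i} \text{ with } f(x) \in V_j\bigr\}
\]
is open and dense in $C^\infty(M,N)$; then $T(W) = \bigcap_{i,j} T_{i,j}(W)$ is residual, and when $W$ is compact without boundary the finitely many non-trivial indices make $T(W)$ itself open.

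For local density around a given $f_0$, I would introduce the finite dimensional family $f_\lambda(x) := f_0(x) + \phi(x) P_\lambda(x)$, where $\phi$ is a smooth bump equal to one on $\overline{U_i}$ and compactly supported in a slightly larger set of $U_i \cap f_0^{-1}(V_j)$, and $P_\lambda$ ranges over polynomials in the local coordinates of degree $\leq k$ with coefficients $\lambda \in \Lambda := \R^N$. The crucial observation is that the universal $k$-jet evaluation
\[
 \Phi \colon \overline{U_i} \times \Lambda \longrightarrow J^k(U_i, V_j), \qquad \Phi(x,\lambda) := j^k f_\lambda(x),
\]
is a submersion: varying the coefficients of $P_\lambda$ independently produces every possible shift of every Taylor coefficient of $f_0$ up to order $k$ at the fixed basepoint $x$. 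In particular $\Phi \transv W$ trivially. Then the parametric transversality principle (apply the simple transversality theorem of the excerpt to $\Phi$, or equivalently apply Sard's theorem to the projection $\Phi^{-1}(W) \to \Lambda$) yields a full measure set of $\lambda \in \Lambda$ for which $j^k f_\lambda \transv W$ throughout $\overline{U_i}$. Since $\lambda$ may be chosen arbitrarily small, this gives density.

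For openness of $T_{i,j}(W)$, transversality to $W$ at a single jet is an open condition in $J^k(M,N)$; combined with continuity of $j^k$ in the $C^k$ topology and compactness of $\overline{U_i}$, any $g$ sufficiently $C^k$-close to $f$ remains in $T_{i,j}(W)$. When $W$ is compact (hence closed in $J^k(M,N)$), only finitely many of the constraints $T_{i,j}$ can be genuinely violated by any given $f$, so the globalization to openness of $T(W)$ is automatic.

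The main obstacle is verifying the submersion property of the universal evaluation $\Phi$: one must check in coordinates that polynomial perturbations of degree $\leq k$ surject onto the fiber of $J^k(U_i,V_j) \to U_i \times V_j$. This is a concrete but slightly tedious linear-algebra calculation in the Taylor basis; once it is in hand, the parametric transversality principle together with second-countability of $M$ and the Baire property of $C^\infty(M,N)$ produce the residual (and, in the compact-$W$ case, open-dense) set in a formal way.
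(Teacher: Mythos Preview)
The paper does not prove this theorem; it is quoted as a background result with a reference to Hirsch, \textit{Differential Topology}, Section~3, Theorem~2.8. So there is no ``paper's own proof'' to compare against; I will simply assess your argument.

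Your outline is the standard textbook proof (Hirsch, Golubitsky--Guillemin): localize to coordinate charts, perturb by polynomials of degree $\le k$ supported on a bump, observe that the resulting universal evaluation $\Phi(x,\lambda)=j^k f_\lambda(x)$ is a submersion onto the jet fibre, and invoke parametric transversality (Sard) to find $\lambda$ arbitrarily close to $0$ with $j^k f_\lambda\transv W$ over $\overline{U_i}$. The submersion step is indeed the heart of the matter and it is correct: where $\phi\equiv 1$, the map $\lambda\mapsto (D^{\alpha}P_\lambda(x))_{|\alpha|\le k}$ is a linear isomorphism between polynomial coefficients and Taylor coefficients at $x$, so the fibre directions are hit, and the base direction in $M$ comes from varying $x$.

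Two places deserve tightening. First, your description of the bump $\phi$ is garbled: you want $\overline{U_i}\subset U_i'$ for a larger chart $U_i'$, $\phi\equiv 1$ on $\overline{U_i}$, $\mathrm{supp}\,\phi\subset U_i'$; and the addition $f_0+\phi P_\lambda$ only makes sense in the target chart $V_j$, so one should first refine the cover so that $f_0(\overline{U_i})$ lies in a single $V_j$ and then take $\lambda$ small enough to stay in that chart. Second, the openness clause for compact $W$ is not quite ``only finitely many $T_{i,j}$ can be violated.'' The clean argument is: the source projection $\pi_M(W)\subset M$ is compact, hence covered by finitely many $\overline{U_i}$; outside $\pi_M(W)$ one has $j^k f(x)\notin W$ automatically, so $T(W)$ equals a \emph{finite} intersection of the $T_{i,j}(W)$, each of which you already showed is open. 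With these cosmetic fixes your proof is correct and coincides with the classical one the paper cites.
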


\begin{rema} \label{SARD} 
The transversality theorems are often proved by means of the Morse-Sard theorem, which says that if $f: M \to N$ is a smooth map then 
$f(\Sigma_f)$ has measure zero in $N$, where $\Sigma_f$ is the set of critical points of $f$ (see e.g. \cite[Section 3, Theorem 1.3]{Hir}). 
In particular, the set of regular values is residual. 

For real analytic maps, the Sard theorem can be replaced by the stronger result that the appropriate set of critical values is a stratified analytic manifold
of positive codimension.  See for instance,  \cite{BeMa}, especially \cite[Theorem 1.4]{BeMa}.
\end{rema}

The article  \cite{BrT86} of Broer-Tangerman gives  a general method for proving that ``Kupka-Smale'' properties of functions or maps
are generic in $C^{\omega}$ spaces if they are generic in $C^{\infty}$ spaces. By a Kupka-Smale property is meant a property that can
be formulated as a transversality condition on a family of maps. To be more precise, the authors consider the example of vector fields $X$
on a compact manifold. Let $M$ and $N$  be compact (finite dimensional) manifolds, and let $S \subset N$ be a stratified manifold.  
 A property ${\mathfrak B}$  of  vector fields  is referred to as a 'Kupka Smale' property if the following holds: Suppose that  a smooth map $ f_X: M \to  N $ is associated to the vector field $X$,  where $f_X$ depends smoothly on X.  Suppose that property $\mathfrak B$ can be reformulated as:  $X$ has property ${\mathfrak B}$ is and only if $f_X \transv S$.   Further suppose that there exists a  smooth $s$-parameter unfolding $X^{\mu}$ of $X^0 = X$,  so that for
$F: M \times \R^s \to N$, defined by $F(\cdot, \mu) = f_{X^{\mu}}$, one has $F \transv_{M\times \{0\}} S$. One can then apply the
Thom transversality principle to obtain that
$$\{\mu \in \R^s: X^{\mu} \; \rm{has\; property\;} \mathfrak B\} = \{\mu \in \R^s: f_{X^{\mu}} \transv S\} $$
is open and dense in a neighborhood of the origin $0 \in \R^s$. It follows that  $X$ can be approximated by $X^{\mu}$ with property $\mathfrak B$, and
therefore the property is dense. Moreover,  nearby $s$-parameter families induce maps which are also transverse to $S$, hence ${\mathfrak B}$ is open. It is proved in 
\cite[Lemma 3.3]{BrT86}, that if ${\mathfrak B}$ as above is open dense in the  $C^k$ space of vector fields then it is also open and dense in the
space of real analytic vector fields.  The proof is to approximate the smooth family by a real analytic family by using the heat equation to make analytic
approximations. In our setting we approximate the Fourier series \eqref{FS1} by a real analytic Fourier series using the heat equation on the circle.

Suppose then  that $X^{\mu}$ has property ${\mathfrak B}$ for an open dense set of $\mu \in \R^s$. Then there
exists a real analytic unfolding $\wt X^{\mu, t}$ with $t > 0$ sufficiently small to ensure that $\wt X^{\mu, t}$ has property $\mathfrak B$ for $\mu$
open-dense in some neighborhood of $0$ in $\R^s$. Now let ${\mathcal U}$ be a neighborhood of $X$ in the real analytic topology. Then there
exists $\mu$ with $|\mu$ sufficiently small to ensure that $\wt X^{\mu, t} \in {\mathcal U}$ has property ${\mathfrak B}$. The openness of ${\mathfrak B}$ follows from the fact that the real analytic topology is stronger than the $C^{\infty}$ -topology.

Thus, there are two steps to prove the genericity of a property ${\mathfrak B}$ in $C^{\omega}$: (i) the property must be formulated 
as the transversality of a map; (ii)  a $C^{\infty}$  unfolding must be found for which the transversality is known to hold. 
  
In the case of  Lemma \ref{class O}, the reformulation is standard: As mentioned in the proof, the condition
	that the critical point is non-degenerate is equivalent to the statement that $\nabla \lcal_{\wt \alpha}^{(q)}$ is transversal to the
	zero section of $T^* \wt \Omega_q$. Indeed, the use of the transversality theorem to prove genericity of Morse functions with distinct values
	at distinct critical points  in $C^{\infty}$ is standard
	\cite{GG73}. 
	In the case of  Lemma \ref{ANALYTICNC}, the reformulation is carried out in \cite[Section 9]{PS87} for exactly the same purpose 
	in the $C^{\infty}$ category.
	
We also recall that the $C^{\infty}$ multi-jet transversality theorem is the following statement: Let $N, P$ be smooth manifolds and denote by  $J_s^r(N, P)$
 the $s$-fold $r$-jet bundle. Given a  smooth map $f: N \to P$ let $j_s^r(f): N^{(s)} \to J_s^r(N, P)$ be defined by,
 $$j_s^rf(x_1, \dots, x_s) = (j^r f(x_1), \dots, j^r f(x_s)). $$
 Let $Q \subset J_s^r(N, P)$ be a smooth submanifold. Then, $\tcal_Q: = \{f \in C^{\infty}(N, P): j_s^r(f) \transv Q \}$ is generic. If $Q$ is compact, then 
 $\tcal_Q$ is open.

\begin{rema}  Another version of the transversality theorem is this: If $W$ is a submanifold of codimension $c$ in $J_s^k(X, Y)$ then
$C^{\infty}(X, Y)$ contains a residual subset such that $(j_s^k(f))^{-1}(W)$ is of codimention at least $c$ in $X^{(s)}$. 

It is stated in \cite[Section 4.3, Remark 3]{Wi04} that ``similar  results hold for the function spaces of type $C^{\omega}$, i.e. real analytic mappings, which in fact can be deduced from the transversality results for $C^{\infty}$ maps, using the fact that $C^{\omega}$ maps are dense in $C^{\infty}$. No  proof nor any reference is given there but in effect this is proved in \cite{BrT86}.

\end{rema}


\begin{thebibliography}{HHHH}
	
\bibitem[Am13]{Am} R. V. Ambartzumian, \emph{Parallel x-ray tomography of convex domains as a search problem in two dimensions}, Izv. Nats. Akad. Nauk Armenii Mat. \textbf{48} (2013), no. 1, 37--52; reprinted in 
J. Contemp. Math. Anal. \textbf{48} (2013), no. 1, 23--34.


\bibitem[BB72]{BB72} Balian, R.; Bloch, C. Distribution of eigenfrequencies for the wave equation in a finite domain. III. Eigenfrequency density oscillations. Ann. Physics 69 (1972), 76-160.

\bibitem[BeMa]{BeMa} P. Bernard and V.  Mandorino, 
Some remarks on Thom's transversality theorem.  Ann. Sc. Norm. Super. Pisa Cl. Sci. (5) 14 (2015), no. 2, 361-386.

\bibitem[BiMi20]{BM20} M. Bialy and A. E. Mironov, The Birkhoff-Poritsky conjecture for centrally symmetric billiard tables, arXiv: 2008.03566.

\bibitem[Bl16]{Bl} W. Blaschke, \emph{Kreis und Kugel}, Viet, Leipzig, 1916; reprint, Chelsea, New York, 1949.

\bibitem[BrT86]{BrT86} H. Broer and F. Tangerman. From a differentiable to a real analytic perturbation theory, applications to the Kupka Smale theorems. Ergodic Theory and Dynamical Systems (1986), 6(3):345-362.

\bibitem[CdV]{CdV} Y. Colin de Verdi\`ere, 
Sur les longueurs des trajectoires périodiques d'un billard. South Rhone seminar on geometry, III (Lyon, 1983), 122-139,
Travaux en Cours, Hermann, Paris, 1984.

\bibitem[Cl20]{Cl20} A. Clarke, Generic properties of geodesic flows on analytic hypersurface of Euclidean space, arXiv: 1908.04662. 

\bibitem[CRL]{CRL} S. C. Creagh, J. M. Robbins, and R. G.  Littlejohn,  Geometrical properties of Maslov indices in the semiclassical trace formula for the density of states. Phys. Rev. A (3) 42 (1990), no. 4, 1907-1922. 

\bibitem[Du88]{Durso} Durso, Catherine, \emph{On the inverse spectral problem for polygonal domains. }
Thesis (Ph.D.) Massachusetts Institute of Technology. 1988.

\bibitem[Fr]{Fr} D. Fried,
\emph{Cyclic resultants of reciprocal polynomials}, Holomorphic dynamics
(Mexico, 1986), 124--128, Lecture Notes in Math., \textbf{1345}, Springer,
Berlin, 1988.

\bibitem[Gh04]{Gh04} Ghomi, M. Shortest periodic billiard trajectories in convex bodies. Geom. Funct. Anal. 14 (2004), no. 2, 295-302. 

\bibitem[GG73]{GG73} M. Golubitsky and V.  Guillemin, {\it Stable mappings and their singularities}. Graduate Texts in Mathematics, Vol. 14. Springer-Verlag, New York-Heidelberg, 1973.

\bibitem[GuMe79]{GuMe79} V. Guillemin and R. B. Melrose, \emph{The Poisson summation formula for manifolds with boundary}. Adv. in Math. \textbf{32} (1979), no. 3, 204 - 232.

\bibitem[GuMe79b]{GuMe79b}  V. Guillemin and R. B.  Melrose,  A cohomological invariant of discrete dynamical systems. E. B. Christoffel (Aachen/Monschau, 1979), pp. 672?679, Birkh\"auser, Basel-Boston, Mass., 198.

\bibitem[HeLuRo17]{HLR1} Hezari, Hamid; Lu, Zhiqin; Rowlett, Julie, \emph{The Neumann isospectral problem for trapezoids.}
Ann. Henri Poincar\'e \textbf{18} (2017), no. 12, 3759--3792. 

\bibitem[HeLuRo20]{HLR2} Hezari, Hamid; Lu, Zhiqin; Rowlett, Julie, \emph{The Dirichlet isospectral problem for trapezoids.}, arXiv: 2009.00714, 2020. 

\bibitem[HeZe10]{HZ}  H. Hezari and S. Zelditch,  \emph{Inverse spectral problems for $(\mathbb Z/2 \mathbb Z)^n$-symmetric domains in $\mathbb R^n$},  GAFA.  \textbf{20}(2010), no.1, 160--191. 

\bibitem[HaZe12]{HZ12} H. Hezari and S. Zelditch, \emph{$C^\infty$ spectral rigidity of the ellipse}, Anal. PDE \textbf{5} (2012), no. 5, 1105--1132. 

\bibitem[HeZe19]{HZ19} H. Hezari and S. Zelditch, One can hear the shape of ellipses of small eccentricity, arXiv:1907.03882.

\bibitem[HeZe21]{HZ21} H. Hezari and S. Zelditch, A new duality in billiards, (in preparation).

\bibitem[HCMF]{HCMF} Hernández Cifre, María A.; Martínez Fernández, Antonio R. The isodiametric problem and other inequalities in the constant curvature 2-spaces. Rev. R. Acad. Cienc. Exactas Fís. Nat. Ser. A Mat. RACSAM 109 (2015), no. 2, 315?325.

%\bibitem[I02]{I02} N. Innami,
%Geometry of geodesics for convex billiards and circular billiards. Nihonkai Math. J. 13 (2002), no. 1, 73--20.

% \bibitem[KS18]{KS18} V. Kaloshin and A. Sorrentino, \emph{On the local Birkhoff conjecture for convex billiards}, Ann. of Math. (2) \textbf{188} (2018), no. 1, %315--380. 

\bibitem[Hir]{Hir}  M. W. Hirsch,  {\it Differential topology}. Corrected reprint of the 1976 original. Graduate Texts in Mathematics, 33. Springer-Verlag, New York, 1994.


\bibitem[KT]{KT} V.V. Kozlov and D. V. Treshchev, {\it Billiards: A Genetic Introduction to the Dynamics
of Systems with Impacts}, Translations of Math. Monographs 89, AMS publications, Providence, R.I. (1991).


\bibitem[K01]{K01} V. V. Kozlov,  Two-link billiard trajectories: extremal properties and stability. (Russian) Prikl. Mat. Mekh. 64 (2000), no. 6, 942-946; translation in J. Appl. Math. Mech. 64 (2000), no. 6, 903-907 (2001).

%\bibitem[K11]{K11}  Kozlov, V. V. The problem of the stability of two-link trajectories of a multidimensional Birkhoff billiard. (Russian) Tr. Mat. Inst. %Steklova 273 (2011), Sovremennye Problemy Matematiki, 212-230 ; translation in Proc. Steklov Inst. Math. 273 (2011), no. 1, 196-213.

\bibitem[KrP]{KrP}  Krantz, Steven G.; Parks, Harold R. A primer of real analytic functions. Second edition. Birk\"auser Advanced Texts: Basler Lehrbücher. [Birkh\"auser Advanced Texts: Basel Textbooks] Birkhäuser Boston, Inc., Boston, MA, 2002.

\bibitem[KM]{KM} A. Kriegl, P. W. Michor, A convenient setting for real analytic mappings, Acta Mathematica 165 (1990), 105--159.

 \bibitem[La93]{L} V. F. Lazutkin, \emph{KAM theory and semiclassical approximations to eigenfunctions},  Ergeb. Math. Grenzgeb. (3), \textbf{24}. Springer-Verlag, Berlin, 1993.
 
 \bibitem[ML]{ML} D. B. Massey and D. T.  Le,  Notes on real and complex analytic and semianalytic singularities. {\it Singularities in geometry and topology}, 81-126, World Sci. Publ., Hackensack, NJ, 2007

\bibitem[MaMe82]{MM} Marvizi, Shahla; Melrose, Richard
\emph{Spectral invariants of convex planar regions. }
J. Differential Geom. \textbf{17} (1982), no. 3, 475--502. 

\bibitem[Mo]{Mo}  Mossinghoff, Michael J. Isodiametric problems for polygons. Discrete Comput. Geom. 36 (2006), no. 2, 363-379.


 \bibitem[Pee]{Pee} J. Peetre,
On Hadamard's variational formula. J. Differential Equations 36
(1980), no. 3, 335-346.

\bibitem[PS87]{PS87} V. Petkov and L. Stojanov,  Periods of multiple reflecting geodesics and inverse spectral results. Amer. J. Math. 109 (1987), no. 4, 619-668.



\bibitem[PS17]{PS17} V. M. Petkov and L. N.  Stoyanov,  {\it Geometry of the generalized geodesic flow and inverse spectral problems}. Second edition. John Wiley $\&$ Sons, Ltd., Chichester, 2017

\bibitem[RS16]{RS16} A. Rosenthal and O. Szasz, Eine Extremaleigenschaft der Kurven konstanter Breite,  Jahresber. Deutsch.
Math.-Verein. 25 (1916), 278-282.

\bibitem[S84]{S84} J. Soucek, 
Morse-Sard theorem for closed geodesics.
Comment. Math. Univ. Carolin. 25 (1984), no. 2, 265-272.


\bibitem[SS72]{SS72} J.  Soucek and V. Soucek,  Morse-Sard theorem for real-analytic functions. Comment. Math. Univ. Carolinae 13 (1972), 45-51.

\bibitem[Vig19]{Vig} A. Vig, The wave trace and the Birkhoff billiards, arXiv: 1910.06441, 2019. 

\bibitem[Wa00]{W00} K. Watanabe, Plane domains which are spectrally determined. Ann. Global Anal. Geom. 18 (2000), no. 5, 447--475.

\bibitem[Wa02]{W02} K. Watanabe,  Plane domains which are spectrally determined. II. J. Inequal. Appl. 7 (2002), no. 1, 25--47.

\bibitem[Wi04]{Wi04} J. Winkelmann, Jör
Realizing connected Lie groups as automorphism groups of complex manifolds.
Comment. Math. Helv. 79 (2004), no. 2, 285-299.

\bibitem[Ze09]{Z09} S. Zelditch, 
Inverse spectral problem for analytic domains. II. $\Z_2$-symmetric domains. 
Ann. of Math. (2) 170 (2009), no. 1, 205-269.



\end{thebibliography}
\end{document}